\tikzset{inner sep=0pt, node distance=5mm,
  root/.style={circle,draw,minimum size=5pt,thick},
  broot/.style={circle,draw,minimum size=5pt,thick,fill},
  xroot/.style={circle,draw,minimum size=5pt,thick,label=below:$\times$},
  doublearrow/.style={postaction={decorate},   decoration={markings,mark=at position .6 with {\arrow[line width=1.2pt]{>}}},double distance=1.6pt,thick},
  rdoublearrow/.style={postaction={decorate},   decoration={markings,mark=at position .4 with {\arrowreversed[line width=1.2pt]{>}}},double distance=1.6pt,thick},
	rtriplearrow/.style={postaction={decorate},   decoration={markings,mark=at position .4 with {\arrowreversed[line width=1.2pt]{>}}},double distance=2.5pt,thick},
	ltriplearrow/.style={postaction={decorate},   decoration={markings,mark=at position .6 with {\arrow[line width=1.2pt]{>}}},double distance=2.5pt,thick},
  curvedline/.style={bend=right}
} 
\numberwithin{equation}{section}
\theoremstyle{plain}
\newtheorem{theorem}{Theorem}[section]
\newtheorem{proposition}[theorem]{Proposition}
\newtheorem{corollary}[theorem]{Corollary}
\newtheorem{lemma}[theorem]{Lemma}
\theoremstyle{definition}
\newtheorem{rem}[theorem]{Remark}
\newtheorem{example}[theorem]{Example}
\newtheorem{definition}[theorem]{Definition}
\newcommand{\Id}{\operatorname{Id}}
\newcommand{\ad}{\operatorname{ad}}
\newcommand{\fsl}{\mathfrak{sl}}
\newcommand{\fzeta}{\mathfrak{z}}
\newcommand{\fm}{\mathfrak{m}}
\newcommand{\fc}{\mathfrak{c}}
\newcommand{\fp}{\mathfrak{p}}
\newcommand{\fs}{\mathfrak{s}}
\newcommand{\fr}{\mathfrak{r}}
\newcommand{\fg}{\mathfrak{g}}
\newcommand{\fh}{\mathfrak{h}}
\newcommand{\1}{\mathbbm{1}}
\newcommand{\ZZ}{\mathbb{Z}}
\newcommand{\NN}{\mathbb{N}}
\newcommand{\CC}{\mathbb{C}}
\renewcommand{\k}{\kappa}
\newcommand{\rk}{\operatorname{rk}}
\newcommand{\be}{\boldsymbol{e}}
\renewcommand{\O}{\mathcal O}
\newcommand{\Oc}{\mathcal O}
\begin{document}
\title[Jordan classes for $\theta$-groups]{On Jordan classes for Vinberg's $\theta$-groups}
\author{Giovanna Carnovale}
\author{Francesco Esposito}
\author{Andrea Santi}
\address{(G.C. and F. E.) Dipartimento di Matematica ``Tullio Levi-Civita" (DM), Via Trieste, 63 - 35121 Padova, Italy.
(A.S) Department of Mathematics and Statistics, UiT the Arctic University of Norway, 9037
Tromsø Norway.}
\email{carnoval@math.unipd.it, esposito@math.unipd.it, asanti.math$@$gmail.com}
\thanks{}
\begin{abstract}
Popov has recently introduced an analogue of Jordan classes (packets, or decomposition classes) for the action of a $\theta$-group $(G_0,V)$, showing that they are finitely-many, locally-closed, irreducible unions of $G_0$-orbits of constant dimension partitioning $V$.  We carry out a local study of their closures showing that Jordan classes are smooth and that their closure is a union of Jordan classes. We parametrize  Jordan classes  and $G_0$-orbits in a given class in terms of the action of subgroups of Vinberg's little Weyl group, and include several examples and counterexamples underlying the differences with the symmetric case and the critical issues arising in the $\theta$-situation.
\end{abstract}
\maketitle
\vskip-0.5cm\par\noindent
\tableofcontents
\vskip-0.5cm\par\noindent
\section{Introduction}
\label{sec:introduction}
\vskip0.3cm\par
Theta groups (or, equivalently,  periodically  graded reductive Lie algebras) were deeply studied in \cite{MR0430168,MR549013} as a natural generalisation of symmetric spaces,  \cite{MR158024,MR0311837}. In all situations $\fg$ is a ${\mathbb Z}_m$-graded complex reductive Lie algebra, its  degree $0$ part $\fg_0$ is again reductive and the focus is on the action of the corresponding connected algebraic group $G_0$ on the other homogeneous components $\fg_i$ of $\fg$. As observed by Vinberg, there is no loss of generality in studying the action on the degree $1$ component $V=\fg_1$ only. Key  results in \cite{MR0430168} concern n \ theory and include: the introduction of a little Weyl group and  the analogue of the Steinberg map and Chevalley's restriction theorem and the proof that the little Weyl groups are complex reflection groups. These results were confirmed also in positive characteristic, \cite{levy}, where an alternative description of the little Weyl group in terms of the usual Weyl group is proposed.  Many interesting examples in representation theory can be interpreted in terms of graded Lie algebras: for instance, if $\fg$ is the Lie algebra of a classical group $G$, a grading on the defining representation of $G$ induces a grading on $\fg$ and the $G_0$-action on $V$ can be seen as a representation of a cyclic quiver with additional structure, \cite[\S 0.3, \S9.5]{LY1}.

A structural feature of theta groups is that they are visible groups, that is, connected reductive algebraic groups with a representation for which  each fiber of the Steinberg map consists of finitely many orbits. This property almost characterizes the theta groups: more precisely, a connected simple irreducible visible linear group is either a (commutant of a) theta group or it is isomorphic to $\mathrm{Spin}(11)$ or $\mathrm{Spin}(13)$ \cite{MR575790}. Various explicit descriptions of the orbits and invariants for theta groups of order  $m=2$ are known (see \cite[Summary Table]{MR1100485}) but a number of cases with $m\geq 3$ have also been considered in the literature \cite{MR2765379,MR1680007,  MR1773770, MR1789188, MR504529}. 

An important application of theta group theory is in the representation theory of reductive groups over a $p$-adic field $F$. Indeed, the classification of positive rank gradings  \cite{levy,levy2,reeder} over the residue field $k$ of a maximal unramified extension $L$ of $F$ leads to the classification of non-degenerate $K$-types, and stable $G_0$-orbits in $V^*$ are stricly related to supercuspidal representations of the rational points of $G$ over $F$ attached to elliptic Z-regular elements of the Weyl group, \cite{reeder2}.   Also, in the context of a graded version of the generalized Springer correspondence, the block decomposition of the $G_0$-equivariant derived category supported on the nilpotent part of each $\fg_i$  leads to the construction of representations of various graded double affine Hecke algebras with possibly unequal parameters, one for each block, \cite{LY1, LY3}. It emerges from these constructions that parabolic induction is no longer the right instrument in the graded setting, leading to the introduction of spirals.  This shows that even though many results in the classical symmetric case have an analogue in the graded setting, generalisations to the case of $m>2$ are not always straightforward. 

This phenomenon is also visible in the study of related $G_0$-stable stratifications in $V$.  In the setting of the ungraded generalized Springer correspondence, one of the relevant stratifications is given by the decomposition into Jordan classes  (packets, or decomposition classes) in a reductive group $G$, or Lie algebra $\fg$. In the Lie algebra setting Jordan classes were introduced in \cite{BK} and were crucial in the construction of sheets for the adjoint action of a semisimple group $G$ on its Lie algebra. These classes are $G$-stable, disjoint, finitely-many, locally-closed, smooth and irreducible. The decomposition into Jordan classes in a Lie algebra turns out to coincide with the decomposition into orbit-types, i.e.,  into the subsets of elements with same stabilizer up to conjugation. Borho and Kraft proved that sheets are easily described as regular closures of those Jordan classes which satisfy some maximality property with respect to closure inclusion, and it was shown in \cite{Bo} that the closure and regular closure of Jordan classes can be described in terms of Lusztig-Spaltenstein's parabolic induction of adjoint orbits. The symmetric analogue of Jordan classes and sheets has been studied by Tauvel and Yu, (see \cite{MR2146652} and references in there) and their closures were studied in \cite{Bu,BH}. In the latter it is again observed that parabolic induction is no longer efficient, and slice induction is proposed: one of the difficulties in working with parabolic induction is the fact that many homogenous Levi subalgebras do not necessarily lie in a homogeneous parabolic subalgebra, see the Appendix \ref{appendix:homogeneous-Levi} for an example of this phenomenon.  
An analogue of Jordan classes for theta groups when $\fg$ is semisimple has been recently introduced by Popov in \cite{MR3890218}, generalizing the classical and symmetric ones. As in these cases, Jordan classes form a partition of $V$ into finitely-many, locally-closed, irreducible unions of $G_0$-orbits of constant dimension, and so sheets for the $G_0$-action on $V$ are regular closures of some Jordan class. 
In this paper we introduce a local study of such Jordan classes and their closures leading us to prove that any Jordan class is smooth and that its closure is a union of Jordan classes. In order to characterize the closure relation,  we provide an analogue of the results in \cite{BH} on slice induction. For our inductive arguments, we needed to extend slightly the notion of Jordan classes to the case of reductive Lie algebras. Our local approach differs from \cite{BH} because we rely on Luna's fundamental Lemma and use the Slodowy slice only after reduction to neighbourhoods of nilpotent points; Luna's slice theorem is also used for the proof of smoothness.

It is also worthwhile to notice that a different, coarser, notion of Jordan equivalence relation could have been introduced, by using regularity for the $G_0$-action rather than for the action of the full group $G$. In the symmetric setting these two notions coincide by virtue of \cite[Proposition 5]{MR0311837}, but  they might differ for $m>2$. Popov's choice of Jordan classes in $V$ ensures that each of them is contained in a usual Jordan class in $\fg$. We devote \S\ref{subsec:reg-conditions} and \S\ref{subsec:regularity questions}
to comparisons of different  notions of regularity and refer to \cite{MR2111215,MR2765379} for an analysis of various results on regularity in theta groups.

The paper is structured as follows. In \S\ref{sec:preliminariestheta}, we recall the basics on periodically graded complex reductive Lie algebras, introduce the relevant notions of regularity and extend to the reductive case the general treatment in \cite{MR3890218} of Jordan classes and sheets in $V$.  We then focus in \S \ref{sec:closure-Jordan-classes} on the local study of the closures of Jordan classes in $V$, the main results here are Theorem \ref{prop:local_holds}, Theorem \ref{thm:union-Jordan-classes}  
and Proposition \ref{prop:Jordan-classes-are-smooth}. We conclude \S\ref{sec:closure-Jordan-classes} with some regularity questions, including Proposition \ref{prop:chiusura-reg-bullet}.
The last section is devoted to slice induction, leading to Theorem \ref{thm:BH}, and to the parametrization of  the Jordan classes in $V$ and the $G_0$-orbits contained in a class. The paper finishes with Example \ref{exampleE8-Jordan-classes} on trivectors in $9$-dimensional space and with Appendix \ref{appendix:homogeneous-Levi}, dealing with obstructions to the existence of homogeneous parabolic subalgebras in $\fg$.

The results of this paper are valid over any algebraically closed field of characteristic zero, but we expect that they extend to suitable positive characteristics. Indeed the results from \cite{luna} needed in \S\ref{sec:closure-Jordan-classes}-\S\ref{sec:slice-parametrization} hold also in positive characteristic under mild restrictions,  cf. \cite{BR}.

During completion of this paper we were informed that Professor \`E. B. Vinberg had passed away. Without his work in \cite{MR0430168} this manuscript would never have been written, so we would like to dedicate it to his memory.

\section{Preliminaries on Vinberg's \texorpdfstring{$\theta$}{}-groups and Jordan classes}
\label{sec:preliminariestheta}
\subsection{Graded Lie algebras}
\label{subsec:gla}
Let $\fg$ be a complex reductive Lie algebra which is $\ZZ_m$-graded, that is, it admits a direct sum decomposition of vector spaces
\begin{equation}
\label{eq:gla}
\fg=\bigoplus_{l\in\ZZ_m}\fg_{l} 
\end{equation}
with $[\fg_{i},\fg_{l}]\subset \fg_{i+l}$ for all $i,l\in\ZZ_m$. We note that the subspaces of 
\eqref{eq:gla} can be recovered as the eigenspaces 
of the automorphism
$\theta$ of $\fg$ 
defined by
$\theta(x)=\omega^l x$ for $x\in\fg_{l}$, where 
$\omega=e^{\frac{2\pi i}{m}}$. 
Conversely, any automorphism $\theta$ of $\fg$ of period $m$ defines a $\ZZ_m$-grading. 
Due to this, we will denote a Lie algebra $\fg$ with a $\ZZ_m$-grading by the triple $\{\fg,\theta,m\}$, or often simply by $\{\fg,\theta\}$. Whenever a subspace $A\subset \fg$ is homogeneous, i.e., it satisfies $A=\oplus_l (A\cap \fg_l)$, we will write
$A_l=A\cap \fg_l$ and 
$A=\oplus_l A_l$. 

The Lie algebra $\fg$ has a decomposition into homogeneous ideals
\begin{align}\label{eq:reductive}
\fg=\fzeta(\fg)\oplus\fs,\; \textrm{ where }\fs:=[\fg,\fg].\end{align} We denote by $\k$ a bilinear form  on $\fg$ that is non-degenerate, $\fg$-invariant, $\theta$-invariant
and such that $\fzeta(\fg)$ and $\fs$ are orthogonal. We call any such bilinear form {\it adapted}.
\begin{lemma}
\label{lem:adapted-bilinear-form}
There exists an adapted bilinear form $\k$ on $\fg$ if and only if $\fzeta(\fg)$ is symmetrically graded, i.e.,  $\dim\fzeta(\fg)_{l}=\dim\fzeta(\fg)_{-l}$ for all $l\in\mathbb Z_m$. 
In this case $\dim\fg_{l}=\dim\fg_{-l}$ for all $l\in\mathbb Z_m$.
\end{lemma}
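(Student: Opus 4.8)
The plan is to isolate a single elementary observation about $\theta$-invariant forms and apply it twice, once to $\fg$ and once to its centre. Suppose $W=\bigoplus_{l}W_l$ is a $\theta$-stable subspace, graded by its $\theta$-eigenspaces $W_l$ (eigenvalue $\omega^l$), and let $B$ be a $\theta$-invariant bilinear form on $W$. For $x\in W_l$ and $y\in W_k$ one has $B(x,y)=B(\theta x,\theta y)=\omega^{\,l+k}B(x,y)$, so $B(W_l,W_k)=0$ unless $k\equiv-l\pmod m$. Thus, in a basis adapted to the grading, the Gram matrix of $B$ is block anti-diagonal with respect to the pairing of $W_l$ with $W_{-l}$; hence $B$ is non-degenerate if and only if each induced pairing between $W_l$ and $W_{-l}$ is perfect, which forces $\dim W_l=\dim W_{-l}$ for every $l$. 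Conversely, whenever $\dim W_l=\dim W_{-l}$ for all $l$ one builds such a $B$ degree-pair by degree-pair: for $l\neq-l$ choose any perfect pairing $W_l\times W_{-l}\to\CC$ (possible since the dimensions agree), and on the self-opposite degrees $l\in\{0,m/2\}$ pick any non-degenerate form on $W_l$.

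Granting this, the last assertion of the lemma is immediate: if an adapted $\k$ exists it is by definition non-degenerate and $\theta$-invariant on all of $\fg$, so applying the observation with $W=\fg$ and $B=\k$ yields $\dim\fg_l=\dim\fg_{-l}$ for all $l$.

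It remains to prove the equivalence. For the forward implication, let $\k$ be adapted. Since $\fg=\fzeta(\fg)\oplus\fs$ with $\fzeta(\fg)\perp\fs$ and $\k$ non-degenerate, the restriction $\k|_{\fzeta(\fg)}$ is again non-degenerate (a vector of $\fzeta(\fg)$ orthogonal to $\fzeta(\fg)$ is automatically orthogonal to $\fs$, hence to all of $\fg$, so it vanishes); as $\fzeta(\fg)$ is homogeneous, $\k|_{\fzeta(\fg)}$ is $\theta$-invariant, and the observation applied to $W=\fzeta(\fg)$ gives the symmetric grading of the centre. For the converse I would reconstruct an adapted form from its two natural pieces. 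The Killing form $\k_\fg$ is $\fg$-invariant and, because $\theta$ is an automorphism, $\theta$-invariant; it vanishes on $\fzeta(\fg)$ and restricts on $\fs$ to the non-degenerate Killing form $\k_\fs$. On the centre, $\fg$-invariance is vacuous since $[\fg,\fzeta(\fg)]=0$, so the hypothesis $\dim\fzeta(\fg)_l=\dim\fzeta(\fg)_{-l}$ lets me choose, via the construction above, a non-degenerate $\theta$-invariant form $B_{\fzeta}$ on $\fzeta(\fg)$. I then set $\k:=\k_\fs\oplus B_{\fzeta}$, declaring $\fs\perp\fzeta(\fg)$; this is non-degenerate and $\theta$-invariant because both summands are, and it is adapted by construction.

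The only point requiring genuine care is the $\fg$-invariance of this glued form, namely $\k([x,y],z)+\k(y,[x,z])=0$ for all $x,y,z\in\fg$. Because $[\fg,\fzeta(\fg)]=0$ and $\fs$ is an ideal, every bracket lands in $\fs$ and the mixed terms vanish by the orthogonality $\fs\perp\fzeta(\fg)$, so the identity reduces to the known invariance of $\k_\fs$ on $\fs$; checking this casewise, according to whether $y,z$ lie in $\fs$ or in $\fzeta(\fg)$, is the routine heart of the argument. A secondary bookkeeping point is the treatment of the self-opposite degrees $l=0$ and, for even $m$, $l=m/2$ in the non-degeneracy count, but there any non-degenerate form on the single space $W_l$ suffices and the dimension condition holds automatically.
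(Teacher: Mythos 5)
Your proof is correct and follows essentially the same route as the paper: the forward direction rests on the same observation that $\theta$-invariance forces $\k(\fg_l,\fg_i)=0$ unless $i+l\equiv 0$, making $\fg_l$ and $\fg_{-l}$ (and likewise $\fzeta(\fg)_l$ and $\fzeta(\fg)_{-l}$) dual, while your converse is exactly the paper's ``appropriate extension of the Killing form of $\fs$'', which you simply carry out in full detail. The extra care you take with the glued form's $\fg$-invariance and the self-opposite degrees is sound but amounts to spelling out what the paper leaves implicit.
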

\begin{proof}
If $\k$ is adapted, then 
$\k(\fg_{l},\fg_{ i})=0$
whenever $i+ l\neq  0$,
hence $\fg_{- l}$ and $\fg_{ l}$ are dual spaces, and so are
$\fs_{- l}$ and $\fs_{ l}$ and also $\fzeta(\fg)_{- l}$ and $\fzeta(\fg)_{l}$.
In particular $\fzeta(\fg)$ is symmetrically graded. 

Conversely, it is enough to
consider an appropriate extension of the Killing form of $\fs$.
\end{proof}
 
{\it With the term reductive $\ZZ_m$-graded Lie algebra $\{\fg,\theta\}$, we will always mean a complex reductive Lie algebra $\fg=\fzeta(\fg)\oplus\fs$ together with a
$\ZZ_m$-grading such that the center $\fzeta(\fg)$ is symmetrically graded.} This is also the class of graded Lie algebras considered in  \cite{MR0430168}, since they allow for adapted bilinear forms. By Lemma \ref{lem:adapted-bilinear-form} we may assume $\k$ to be an extension of the Killing form of $\fs$.
\vskip0.2cm\par

Let $G$ be any connected algebraic group with Lie algebra $Lie(G)=\fg$, let $S$ be the connected subgroup of $G$ with $Lie(S)=\fs$, and let $^\circ$ denote the identity component of a closed subgroup, so $G=Z(G)^\circ S$. Let $G_0$  be the connected subgroup of $G$ with  $Lie(G_0)=\fg_0$. 
Unless otherwise stated, for Lie subalgebras of $\fg$  we will use a gothic letter, the corresponding Roman capital letter will indicate the  connected subgroup of $G$ with that Lie algebra, a lower index $0$ its intersection with $G_0$. So, the decomposition $\fg_0=\fzeta(\fg)_0\oplus\fs_0$, gives an almost direct product $G_0=Z(G)_0^\circ S_0^\circ$, where $S_0^\circ$ is the reductive, connected subgroup of $S$ with $Lie(S_0^\circ)=\fs_0$.  By restricting the adjoint representation, $G_0$ and $S_0^\circ$ act on $\fg_{l}$, for any $l\in\ZZ_m$, with trivial action of $Z(G)_0^\circ$.  The reduction process in \cite[pag. 467]{MR0430168} shows that it is enough to focus on the case of $l=1$; we set $V:=\fg_1$.
The linear group of transformations of $V$ associated to $G_0$ is called the {\it $\theta$-group} of the graded Lie algebra $\left\{\fg,\theta\right\}$ and  it does not depend on the choice of $G$ in the class of locally isomorphic groups. However, by abuse of notation, we will directly refer to $G_0$ as the $\theta$-group of $\left\{\fg,\theta\right\}$.  
The decomposition \eqref{eq:reductive} in degree $1$ gives a decomposition of $V$ into $G_0$-stable subspaces $V=\fzeta(\fg)_1\oplus \fs_1$ with trivial $G_0$-action on $\fzeta(\fg)_1$. Observe that $\fzeta(\fg)_1\not=0$ may occur only if $\theta$ is not inner.

Let $x\in \fg$ and $\fm$ be a Lie subalgebra of $\fg$ with associated subgroup $M\subset G$. The orbit of $x$ for the action of $M$ is denoted by $\O_x^{M}$, and the stabilizer of $x$ in $M$ by $M^x$. 
The centralizer of $x$ in $\fm$ is denoted by $\fm^x$, with center $\fzeta(\fm^x)$. If $x\in V$, then  $\fg^x$, $\fzeta(\fg^x)$ and $[\fg^x,\,\fg^x]$ are $\theta$-stable, in other words homogeneous.  We recall that if $x\in\fg$ is semisimple, then $G^{x}$ is a connected subgroup of $G$, the Levi subgroup of a parabolic subgroup of $G$ \cite[7.3.5]{MR632835}. In this case, the restriction of $\k$ to $\fg^x=\fzeta(\fg^x)\oplus [\fg^x,\fg^x]$
is an adapted bilinear form, so $\fzeta(\fg^x)=\fzeta(\fg)\oplus \fzeta(\fs^x)$ is symmetrically graded.
We stress that $G_0^{x}=G^{x}\cap G_0$ is not connected in general.

We recall the following general results on centralizers, that we will later apply when $x\in V$.
\begin{lemma}\cite[Proposition 35.3.1, Corollary 35.3.2]{MR2146652}
\label{lem:doublecentr}
Let $x\in \fg$. Then
\begin{align}
\label{eq:orthogonal}
[\fg,x]^\perp=[\fg,\fzeta(\fg^x)]^\perp=\fg^x,\; \textrm{ and }
[\fg,\fg^x]^\perp=\fzeta(\fg^x).
\end{align}
The following conditions are equivalent for any $x,y\in \fg$:
\begin{itemize}
	\item[$(i)$] $y\in\fzeta(\fg^x)$;
	\item[$(ii)$] $\fg^x\subset \fg^y$;
	\item[$(iii)$] $[\fg,y]\subset [\fg,x]$;
	\item[$(iv)$] $\fzeta(\fg^y)\subset \fzeta(\fg^x)$;
\end{itemize}
\end{lemma}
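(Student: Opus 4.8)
The plan is to derive everything from two structural facts about the adapted form $\k$: that it is non-degenerate and $\fg$-invariant, so that $\k([a,b],c)=\k(a,[b,c])$ for all $a,b,c$ and $(A^\perp)^\perp=A$ for every subspace $A\subseteq\fg$ (with orthogonality reversing inclusions). The only extra input is the elementary observation that $x$ always lies in $\fzeta(\fg^x)$, since by definition every element of $\fg^x$ commutes with $x$; in particular $y\in\fzeta(\fg^y)$ for all $y\in\fg$. The statement is classical, being exactly \cite[Proposition 35.3.1, Corollary 35.3.2]{MR2146652}, so the role of the proof is to organise these observations.

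For the orthogonality relations I would first establish $[\fg,x]^\perp=\fg^x$. Indeed, for $y\in\fg$ the invariance of $\k$ gives $\k(y,[z,x])=\k([x,y],z)$ for all $z\in\fg$, so $y\in[\fg,x]^\perp$ if and only if $\k([x,y],z)=0$ for all $z$, which by non-degeneracy means $[x,y]=0$, i.e. $y\in\fg^x$. Since $x\in\fzeta(\fg^x)$ we have $[\fg,x]\subseteq[\fg,\fzeta(\fg^x)]$, whence $[\fg,\fzeta(\fg^x)]^\perp\subseteq\fg^x$; the reverse inclusion holds because any $w\in\fg^x$ and $c\in\fzeta(\fg^x)$ satisfy $\k(w,[z,c])=-\k([w,c],z)=0$, as $[w,c]=0$. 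This gives the first chain of equalities. The identity $[\fg,\fg^x]^\perp=\fzeta(\fg^x)$ is obtained the same way: if $c\in\fzeta(\fg^x)$ then $\k(c,[z,w])=-\k([c,w],z)=0$ for $w\in\fg^x$, so $\fzeta(\fg^x)\subseteq[\fg,\fg^x]^\perp$; conversely, if $y\in[\fg,\fg^x]^\perp$, specialising $w=x$ gives $y\in[\fg,x]^\perp=\fg^x$, while $\k([y,w],z)=-\k(y,[z,w])=0$ for all $z\in\fg$, $w\in\fg^x$ forces $[y,w]=0$ by non-degeneracy, so $y\in\fzeta(\fg^x)$.

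With these relations in hand the equivalences follow formally. For (i)$\Rightarrow$(ii): if $y\in\fzeta(\fg^x)$ then $y$ commutes with every element of $\fg^x$, so $\fg^x\subseteq\fg^y$. For (ii)$\Rightarrow$(iv): from $\fg^x\subseteq\fg^y$ we get $[\fg,\fg^x]\subseteq[\fg,\fg^y]$, and taking orthogonals together with $\fzeta(\fg^x)=[\fg,\fg^x]^\perp$ yields $\fzeta(\fg^y)\subseteq\fzeta(\fg^x)$. For (iv)$\Rightarrow$(i): since $y\in\fzeta(\fg^y)\subseteq\fzeta(\fg^x)$, assertion (i) follows at once. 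Finally (ii)$\Leftrightarrow$(iii) is immediate by applying $(\,\cdot\,)^\perp$ to the identity $\fg^x=[\fg,x]^\perp$ and using $(A^\perp)^\perp=A$, which turns $\fg^x\subseteq\fg^y$ into $[\fg,y]\subseteq[\fg,x]$ and conversely.

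I do not expect a genuine obstacle: the two delicate points are only the reductive (rather than semisimple) setting, which is harmless since $\k$ is non-degenerate on all of $\fg$ and the central part $\fzeta(\fg)$ sits inside every $\fg^x$ and every $\fzeta(\fg^x)$, and the repeated use of $(A^\perp)^\perp=A$, which is valid precisely because $\fg$ is finite-dimensional and $\k$ is non-degenerate. Everything else is the bookkeeping of invariance identities carried out above.
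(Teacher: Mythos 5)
Your proof is correct and complete. Note that the paper itself offers no argument for this lemma: it is quoted verbatim from Tauvel--Yu \cite[Proposition 35.3.1, Corollary 35.3.2]{MR2146652}, and your derivation --- computing $[\fg,x]^\perp=\fg^x$ and $[\fg,\fg^x]^\perp=\fzeta(\fg^x)$ from invariance and non-degeneracy of $\k$, then running the cycle $(i)\Rightarrow(ii)\Rightarrow(iv)\Rightarrow(i)$ together with $(ii)\Leftrightarrow(iii)$ via $(A^\perp)^\perp=A$ --- is precisely the standard argument of that reference, correctly adapted to the reductive setting where $\k$ remains non-degenerate on all of $\fg$.
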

\begin{corollary}
\label{cor:preliminaryfacts}
Let $g\in G_0$ and $x,y\in V$. Then the following conditions are equivalent:
\begin{itemize}
	\item[$(i)$] $g\cdot\fg^{x}=\fg^{y}$;
	\item[$(ii)$] $g\cdot \fzeta(\fg^{x})=\fzeta(\fg^{y})$;
	\item[$(iii)$] $g\cdot \fzeta(\fg^{x})_1=\fzeta(\fg^{y})_1$.
\end{itemize}
\end{corollary}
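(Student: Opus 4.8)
The plan is to establish the cycle of implications $(i)\Rightarrow(ii)\Rightarrow(iii)\Rightarrow(i)$. Two general facts about the adjoint action will be used repeatedly: first, $\mathrm{Ad}(g)$ is a Lie algebra automorphism of $\fg$ for every $g\in G$, so it sends centralizers to centralizers, $g\cdot\fg^{A}=\fg^{g\cdot A}$ for any subset $A\subseteq\fg$ (where $\fg^{A}:=\bigcap_{a\in A}\fg^{a}$), and in particular carries the centre of a subalgebra onto the centre of its image; second, for $g\in G_0$ the automorphism $\mathrm{Ad}(g)$ commutes with $\theta$ and therefore preserves each homogeneous component $\fg_l$, so that $g\cdot(A_l)=(g\cdot A)_l$ for every homogeneous $A$.

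The implication $(i)\Rightarrow(ii)$ is then immediate: from $g\cdot\fg^{x}=\fg^{y}$ we get $g\cdot\fzeta(\fg^{x})=\fzeta(g\cdot\fg^{x})=\fzeta(\fg^{y})$. For $(ii)\Rightarrow(iii)$ I would recall that $\fzeta(\fg^{x})$ and $\fzeta(\fg^{y})$ are homogeneous for $x,y\in V$, and simply take degree-$1$ components in $(ii)$, using that $g\in G_0$ respects the grading.

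The only implication carrying genuine content is $(iii)\Rightarrow(i)$, and I would isolate beforehand the elementary identity
\[
\fg^{\,\fzeta(\fg^{x})_1}=\fg^{x}\qquad(x\in V).
\]
Its proof has two halves. On the one hand $\fzeta(\fg^{x})$ centralizes $\fg^{x}$ by definition of the centre, so a fortiori $\fzeta(\fg^{x})_1$ does, giving $\fg^{x}\subseteq\fg^{\,\fzeta(\fg^{x})_1}$. On the other hand $x$ is central in $\fg^{x}$ and homogeneous of degree $1$, hence $x\in\fzeta(\fg^{x})_1$; consequently any element centralizing all of $\fzeta(\fg^{x})_1$ centralizes $x$ and so lies in $\fg^{x}$, giving the reverse inclusion. (Alternatively, the inclusion $\fg^{\,\fzeta(\fg^{x})}=\fg^{x}$ follows from the equivalence $(i)\Leftrightarrow(ii)$ of Lemma \ref{lem:doublecentr}; the point of the refinement above is that the single homogeneous summand $\fzeta(\fg^{x})_1$ already suffices, thanks to $x\in\fzeta(\fg^{x})_1$.) Granting this identity, $(iii)\Rightarrow(i)$ is a one-line computation: applying $\mathrm{Ad}(g)$ and using $g\cdot\fg^{A}=\fg^{g\cdot A}$,
\[
g\cdot\fg^{x}=g\cdot\fg^{\,\fzeta(\fg^{x})_1}=\fg^{\,g\cdot\fzeta(\fg^{x})_1}=\fg^{\,\fzeta(\fg^{y})_1}=\fg^{y}.
\]

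I do not expect a serious obstacle here: the whole statement collapses onto the identity $\fg^{\,\fzeta(\fg^{x})_1}=\fg^{x}$, which rests only on the two elementary observations that $x\in\fzeta(\fg^{x})_1$ and that $\fzeta(\fg^{x})$ centralizes $\fg^{x}$. The one place asking for a little care is the grading bookkeeping in $(ii)\Leftrightarrow(iii)$, namely the claim that $G_0$ acts homogeneously; this is because $[\fg_0,\fg_l]\subseteq\fg_l$ shows $\mathrm{ad}(\fg_0)$ preserves each $\fg_l$, and exponentiating spreads this to the connected group $G_0$.
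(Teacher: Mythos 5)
Your proof is correct and follows essentially the same route as the paper's: both arguments reduce everything to the observation that $x\in\fzeta(\fg^{x})_1$ (and likewise $y\in\fzeta(\fg^{y})_1$), so that the degree-$1$ part of the centre already determines the full centralizer, making $(iii)\Rightarrow(i)$ the only step with content. The sole difference is presentational: you package this as the self-contained identity $\fg^{\,\fzeta(\fg^{x})_1}=\fg^{x}$, whereas the paper obtains the two inclusions by invoking Lemma \ref{lem:doublecentr} at $y$ relative to $g\cdot x$ and at $x$ relative to $g^{-1}\cdot y$.
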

\begin{proof}
Clearly $(i)\Leftrightarrow (ii)$ by Lemma \ref{lem:doublecentr} since $g\cdot\fg^{x}=\fg^{g\cdot x}$ and $g\cdot \fzeta(\fg^{x})=\fzeta(\fg^{g\cdot x})$, and $(ii)\Rightarrow (iii)$. If $(iii)$ holds, then $y\in \fzeta(\fg^{y})_1=\fzeta(\fg^{g\cdot x})_1$ and
$x\in\fzeta(\fg^{g^{-1}\cdot y})_1$, hence $g\cdot\fg^{x}=\fg^y$ by Lemma \ref{lem:doublecentr}.
\end{proof}
\subsection{The Jordan decomposition}
\label{subsec:jd}
Let $\{\fg,\theta\}$ be a reductive $\ZZ_m$-graded Lie algebra.
For elements $x, y, z$ in $\fg$,  lower indices $s$ and $n$ will always indicate semisimple and nilpotent parts in the Jordan decomposition, i.e., they stand for $x=x_s+x_n$ with  $x_s\in\fg$ semisimple, $x_n\in\fg$ nilpotent, and $[x_s,x_n]=0$. 
Elements of $\fzeta(\fg)$ are always intended to be semisimple. 

Let $\mathcal S$ (resp. $\mathcal N$) be the set of semisimple (resp. nilpotent) elements of $\fg$.
We note that $\theta$ preserves both $\mathcal S$ and $\mathcal N$,
so semisimple and nilpotent parts of any $x\in\fg_{ l}$ also belong to $\fg_{ l}$.
We set $\mathcal S_V=\mathcal S\cap V$, $\mathcal N_V=\mathcal N\cap V$, and
stress that the number of $G_0$-orbits in $\mathcal{N}_V$ is finite \cite{MR0430168}. 

\begin{lemma}
The  action of the reductive  Lie algebra $\fg_{0}$  on  $\fg$ is completely reducible.
\end{lemma}
\begin{proof}
Since $\fg_0=\fzeta(\fg)_0\oplus\fs_0$, it is sufficient to prove the claim for $\fs_0$. 
Now $\k$ 
restricted to $\fs_{0}$ is non-degenerate and $\fs_{0}$ contains the semisimple and nilpotent parts of any of its elements. The claim then follows from, e.g., \cite[Proposition 20.5.12]{MR2146652}.
\end{proof}
We emphasize that $\fg_0$ is not a subalgebra of maximal rank of $\fg$ in general, that is, it might not contain any Cartan subalgebra of $\fg$.
Let $x\in V$. A direct consequence of Lemma \ref{lem:doublecentr} is:
\begin{lemma}
\label{lem:tanort} The tangent space $T_x\O^{G_0}_x$ to $\O^{G_0}_x$ at $x$ is given by the subspace $[\fg_{0},x]$ of $V$. Its orthogonal complement in $\fg_{-1}$ coincides with $\fg^x_{-1}$.
\end{lemma}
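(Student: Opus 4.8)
The plan is to treat the two assertions separately, since the first is a general fact about algebraic group actions while the second is where the grading genuinely enters.

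For the tangent space, I would argue as follows. The orbit $\O^{G_0}_x$ is the image of the orbit map $\mu\colon G_0\to V$, $g\mapsto g\cdot x$, and as the orbit of an algebraic group acting on $V$ it is a smooth, locally closed subvariety whose tangent space at $x$ is the image of the differential $d\mu_e$ at the identity. Since $G_0$ acts through the (restricted) adjoint representation, this differential sends $\xi\in\fg_{0}=\mathrm{Lie}(G_0)$ to $[\xi,x]$, so that $T_x\O^{G_0}_x=[\fg_{0},x]$. I would also note that $[\fg_{0},x]\subseteq[\fg_{0},\fg_1]\subseteq\fg_1=V$ by the grading, so that this is indeed a subspace of $V$, as stated.

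For the orthogonality statement, the key input is the first identity of Lemma \ref{lem:doublecentr}, namely $[\fg,x]^\perp=\fg^x$, together with the grading property of $\k$ recorded in the proof of Lemma \ref{lem:adapted-bilinear-form}: one has $\k(\fg_l,\fg_i)=0$ whenever $i+l\neq0$ in $\ZZ_m$. Intersecting $\fg^x=[\fg,x]^\perp$ with $\fg_{-1}$ gives $\fg^x_{-1}=[\fg,x]^\perp\cap\fg_{-1}$, so the task reduces to showing that a vector $y\in\fg_{-1}$ is orthogonal to all of $[\fg,x]$ if and only if it is orthogonal to the subspace $[\fg_{0},x]$.

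This last reduction is the only real computation, and it is a degree count. Writing $[\fg,x]=\bigoplus_{l\in\ZZ_m}[\fg_l,x]$ with $[\fg_l,x]\subseteq\fg_{l+1}$, the grading property of $\k$ forces $\k(\fg_{-1},[\fg_l,x])=0$ unless $-1+(l+1)=0$, i.e.\ unless $l=0$. Hence $\k(y,[\fg,x])=\k(y,[\fg_{0},x])$ for every $y\in\fg_{-1}$, and therefore $[\fg,x]^\perp\cap\fg_{-1}=[\fg_{0},x]^\perp\cap\fg_{-1}$, which is precisely the orthogonal of the tangent space $[\fg_{0},x]$ taken inside $\fg_{-1}$. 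Combining this with $\fg^x_{-1}=[\fg,x]^\perp\cap\fg_{-1}$ yields the claim. I do not expect a genuine obstacle here: the only care needed is to keep the $\ZZ_m$-degrees straight and to use the perfect pairing between $\fg_1$ and $\fg_{-1}$ induced by $\k$, which is what makes ``orthogonal in $\fg_{-1}$'' a well-defined operation in the first place.
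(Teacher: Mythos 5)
Your proposal is correct and follows exactly the route the paper intends: the paper gives no written proof, stating only that the lemma is ``a direct consequence of Lemma \ref{lem:doublecentr}'', and your argument is precisely the spelled-out version of that consequence --- the standard identification $T_x\O^{G_0}_x=[\fg_0,x]$ via the differential of the orbit map, followed by intersecting $[\fg,x]^\perp=\fg^x$ with $\fg_{-1}$ and using the degree pairing $\k(\fg_l,\fg_i)=0$ for $i+l\neq 0$ to reduce orthogonality against $[\fg,x]$ to orthogonality against $[\fg_0,x]$. No gaps; the degree count $l=0$ is exactly the right bookkeeping.
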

\subsection{The Cartan subspace}
\label{subsec:cartansubspace}
A {\it Cartan subspace} of $\{\fg,\theta\}$ is an abelian subspace $\fc$ of $V$ which consists of semisimple elements and it is maximal in the class of such subspaces. 

\begin{theorem}\cite[pag. 472]{MR0430168}
\label{thm:472}
Any two Cartan subspaces of $\left\{\fg,\theta\right\}$ are conjugate by the action of an element in $G_0$. As a consequence, if $x\in {\mathcal S}_V$, then 
$\Oc_x^{G_0}$ meets any Cartan subspace of $\{\fg,\theta\}$.
\end{theorem}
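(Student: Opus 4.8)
The plan is to deduce the ``consequence'' formally from the conjugacy assertion, and then to reduce the conjugacy of Cartan subspaces to a single invariant-theoretic statement.

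\emph{The consequence.} Assume conjugacy is known. If $x\in\mathcal{S}_V$, then $\CC x$ is an abelian subspace of $V$ consisting of semisimple elements, so by Zorn's lemma it lies in a Cartan subspace $\fc_x\ni x$. For an arbitrary Cartan subspace $\fc'$ choose $g\in G_0$ with $g\cdot\fc_x=\fc'$; then $g\cdot x\in\fc'$, so $\Oc^{G_0}_x$ meets $\fc'$.

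\emph{Reduction to a crux.} Fix a Cartan subspace $\fc$. Its commuting semisimple elements give a joint eigenspace decomposition $\fg=\bigoplus_{\alpha}\fg_{(\alpha)}$, where $\alpha$ runs over a finite set of linear forms on $\fc$, $\fg_{(\alpha)}=\{y\in\fg:[c,y]=\alpha(c)y\ \text{for all }c\in\fc\}$, and $\fg_{(0)}=\fg^{\fc}=:\mathfrak{l}$; for $a$ outside the finitely many hyperplanes $\ker\alpha$ ($\alpha\neq0$) one has $\fg^a=\mathfrak{l}$. This $\mathfrak{l}$ is $\theta$-stable and reductive, and since $[\mathfrak{l},\fc]=0$ with $\fc\subseteq\mathfrak{l}$ we get $\fc\subseteq\fzeta(\mathfrak{l})_1$; as $\fzeta(\mathfrak{l})_1$ is abelian and semisimple, maximality of $\fc$ forces $\fc=\fzeta(\mathfrak{l})_1$, so that $\fc$ is the unique Cartan subspace of $\{\mathfrak{l},\theta\}$ (every Cartan subspace of $\mathfrak{l}$ contains the central $\fzeta(\mathfrak{l})_1=\fc$, which is already maximal). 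Now given two Cartan subspaces $\fc_1,\fc_2$: if $\fc_1=\fzeta(\fg)_1$ then, since every Cartan subspace contains $\fzeta(\fg)_1$ and $\fc_1$ is maximal, all Cartan subspaces equal $\fc_1$ and we are done; otherwise a generic $a\in\fc_1$ is non-central with $\fg^a=\fg^{\fc_1}=:\mathfrak{l}$ proper. If we can find $g\in G_0$ with $g\cdot a\in\fc_2$, then after replacing $\fc_1,a$ by $g\cdot\fc_1,g\cdot a$ we have $a\in\fc_1\cap\fc_2$ and $\fg^a=\mathfrak{l}$; hence $\fc_2\subseteq\fg^a=\mathfrak{l}$ is a Cartan subspace of $\mathfrak{l}$, forcing $\fc_2=\fc_1$ by uniqueness. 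Thus everything reduces to the statement
\[
(\star)\qquad\text{every semisimple $G_0$-orbit meets any given Cartan subspace $\fc$,}
\]
which in particular supplies the required conjugation $g\cdot a\in\fc_2$.

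\emph{The crux and the main obstacle.} To prove $(\star)$ I would pass to the categorical quotient $\pi\colon V\to V/\!\!/G_0=\operatorname{Spec}\CC[V]^{G_0}$ and use two facts: (A) semisimple elements have closed $G_0$-orbits, and each fibre of $\pi$ contains exactly one closed orbit, so two semisimple elements are $G_0$-conjugate iff $\pi$ takes the same value on them; and (B) the restriction $\pi|_{\fc}\colon\fc\to V/\!\!/G_0$ is surjective. Granting these, for semisimple $x$ pick $c\in\fc$ with $\pi(c)=\pi(x)$ by (B); by (A) the closed orbits $\Oc^{G_0}_x$ and $\Oc^{G_0}_c$ coincide, so $x$ is conjugate into $\fc$, proving $(\star)$. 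The main obstacle is exactly (A) and (B), which carry the genuine content of the theorem: (A) is established through the Hilbert--Mumford criterion, analysing limits $\lim_{t\to0}\lambda(t)\cdot x$ along one-parameter subgroups $\lambda$ of $G_0$ and using the finiteness of the nilpotent $G_0$-orbits in $\mathcal{N}_V$ to exclude the degeneration of a semisimple orbit to a smaller one; (B) is proven by showing $\pi|_{\fc}$ is a dominant proper, hence surjective, morphism, the crucial inputs being the dimension equality $\dim\fc=\dim V/\!\!/G_0$ and the finiteness of its generic fibres. These are precisely the invariant-theoretic results of \cite{MR0430168}; once they are in hand, the reductions above are formal.
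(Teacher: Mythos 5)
Your formal reductions are correct, but a point of framing first: the paper contains no proof of Theorem \ref{thm:472} at all --- it is imported verbatim from \cite[pag.~472]{MR0430168}, and the invariant theory recalled right after it (the isomorphism $\CC[V]^{G_0}\cong\CC[\fc]^{W_{Vin}}$, the Steinberg map $\varphi\colon V\to V/\!/G_0\cong\fc/W_{Vin}$ and the structure of its fibres) is presented as a \emph{consequence} of it, exactly as in Vinberg's paper, where conjugacy of Cartan subspaces (p.~472) precedes the study of $\CC[V]^{G_0}$ (his \S 4). Within your proposal, the deduction of the second assertion from the first, the identity $\fc=\fzeta(\fc_\fg(\fc))_1$ (which is Proposition \ref{prop:Cartansubspaceandcentralizerofc}), the uniqueness of the Cartan subspace of $\{\fc_\fg(\fc),\theta\}$, and the equivalence of the conjugacy assertion with your statement $(\star)$ are all sound.

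The genuine gap is at the crux, and it is a circularity rather than an omission one can wave off to Vinberg. Granting (A), your (B) is not an auxiliary invariant-theoretic fact: it \emph{is} $(\star)$. Indeed, points of $V/\!/G_0$ correspond to closed $G_0$-orbits; a closed orbit consists of semisimple elements (if $\Oc$ is closed and $x\in\Oc$, then $x_s\in\overline{\Oc^{G_0}_x}=\Oc$, so $x$ is conjugate to $x_s$); and by (A) the closed orbit in the fibre over $\pi(c)$, $c\in\fc$, is $\Oc^{G_0}_c$. Hence surjectivity of $\pi|_{\fc}$ says precisely that every semisimple orbit meets $\fc$. Worse, the inputs you propose for establishing (B) --- the equality $\dim\fc=\dim V/\!/G_0$ and finiteness of the generic fibres of $\pi|_{\fc}$ --- are themselves theorems of Vinberg's \S 4, proved \emph{after} and \emph{by means of} the conjugacy theorem: injectivity of the restriction $\CC[V]^{G_0}\to\CC[\fc]$ is obtained exactly from the fact that every semisimple orbit meets $\fc$, and even the statement that all Cartan subspaces have the same dimension (so that ``$\dim\fc$'' is unambiguous) is only known via conjugacy. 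So the proposal reduces Theorem \ref{thm:472} to an equivalent, unproven statement. What is missing is an independent idea at this point: classically one works with a $\theta$-stable compact real form of $\fg$ and an extremum (or minimal-vector) argument over the compact group $K_0\subset G_0$, forcing a suitable conjugate of a generic element of $\fc_1$ to commute with a generic element of $\fc_2$, after which your own maximality and uniqueness steps finish the proof; this is the Kostant--Rallis route \cite{MR0311837} for $m=2$ and the shape of Vinberg's argument in general, and no GIT shortcut through (A) and (B) avoids it. (A smaller inaccuracy: closedness of semisimple $G_0$-orbits in (A) has nothing to do with finiteness of nilpotent orbits; it follows, e.g., from closedness of $\Oc^{G}_x$ in $\fg$ together with $\dim\Oc^G_x=m\dim\Oc^{G_0}_x$ from Corollary \ref{corollaryK-R}, both of which are independent of conjugacy.)
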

The dimension of a Cartan subspace of a graded Lie algebra $\{\fg,\theta\}$ is called the {\it rank} of $\{\fg,\theta\}$. 
It is clear that 
$\left\{\fg,\theta\right\}$ has zero rank if and only if $V\subset {\mathcal N}_V$.
For any set $R$ of commuting elements of $\mathcal{S}_V$, the centralizer $\fc_\fg(R)=\cap_{x\in R}\fg^x$ of $R$ in $\fg$
is a homogeneous Levi subalgebra of $\fg$, so
\begin{equation}
\label{eq:centralizerdecomp}
\fc_\fg(R)=\fzeta(\fc_\fg(R))\oplus [\fc_\fg(R),\fc_\fg(R)]
\end{equation} and these summands are also homogeneous. We recall a useful characterization of a Cartan subspace in terms of its centralizer \cite[pag. 471]{MR0430168}.
\begin{proposition}
\label{prop:Cartansubspaceandcentralizerofc}
A subspace $\fc\subset V$ consisting of commuting semisimple elements is a Cartan subspace if and only if
$\fzeta(\fc_\fg(\fc))_1=\fc$ and the graded Lie algebra $\left\{ [\fc_\fg(\fc),\fc_\fg(\fc)],\theta\right\}$ has zero rank.
\end{proposition}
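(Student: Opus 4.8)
The plan is to translate maximality of $\fc$ into a statement about semisimple elements living in the derived part of its centralizer. I would set $\mathfrak l:=\fc_\fg(\fc)$, which by the discussion preceding the statement is a homogeneous Levi subalgebra with homogeneous decomposition $\mathfrak l=\fzeta(\mathfrak l)\oplus[\mathfrak l,\mathfrak l]$, so that in particular $\mathfrak l_1=\fzeta(\mathfrak l)_1\oplus[\mathfrak l,\mathfrak l]_1$. Two observations I would record at the outset are: (i) since $\fc$ is abelian and every element of $\mathfrak l$ centralizes $\fc$ by definition, one always has $\fc\subseteq\fzeta(\mathfrak l)_1$; and (ii) as $\fzeta(\mathfrak l)$ is the centre of a reductive algebra it consists of semisimple elements, so $\fzeta(\mathfrak l)_1$ is itself an abelian subspace of $V$ of commuting semisimple elements containing $\fc$. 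The whole proof then runs by comparing $\fc$ with $\fzeta(\mathfrak l)_1$ and by detecting semisimple elements inside $[\mathfrak l,\mathfrak l]_1$.

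For the forward implication I would assume $\fc$ is a Cartan subspace. By (ii) and maximality, the inclusion $\fc\subseteq\fzeta(\mathfrak l)_1$ must be an equality, which is the first asserted condition. For the zero-rank condition I would argue by contradiction: if $\{[\mathfrak l,\mathfrak l],\theta\}$ did not have zero rank, then $[\mathfrak l,\mathfrak l]_1$ would contain a nonzero element $z$ that is semisimple, hence semisimple in $\fg$; since $z\in\mathfrak l$ centralizes $\fc$ and commutes with the central elements of $\fc\subseteq\fzeta(\mathfrak l)$, every $c+\lambda z$ would be a sum of commuting semisimple elements and therefore semisimple. Since $z\in[\mathfrak l,\mathfrak l]$ whereas $\fc\subseteq\fzeta(\mathfrak l)$, we have $z\notin\fc$, so $\fc\oplus\CC z$ would be an abelian subspace of semisimple elements strictly larger than $\fc$, contradicting maximality. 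The criterion that zero rank is equivalent to the degree-one part lying in $\mathcal N$ then gives the claim.

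For the converse I would assume both conditions and take any abelian subspace $\fc'$ of $V$ of semisimple elements containing $\fc$, aiming to show $\fc'=\fc$. Given $w\in\fc'$, semisimplicity and commutation with $\fc$ place $w$ in $\mathfrak l_1=\fc\oplus[\mathfrak l,\mathfrak l]_1$, so $w=c+t$ with $c\in\fzeta(\mathfrak l)_1=\fc$ and $t\in[\mathfrak l,\mathfrak l]_1$. Here $c$ is central in $\mathfrak l$, so $[c,t]=0$ and $t=w-c$ is a difference of commuting semisimple elements, hence semisimple; but the zero-rank hypothesis forces every element of $[\mathfrak l,\mathfrak l]_1$ to be nilpotent, so $t$ is both semisimple and nilpotent and thus $t=0$. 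Then $w=c\in\fc$, giving $\fc'=\fc$ and the maximality of $\fc$.

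The steps that need the most care, and which I expect to be the main obstacle, are the transfers of semisimplicity and nilpotency between $[\mathfrak l,\mathfrak l]$, $\mathfrak l$ and $\fg$ for the elements $z$ and $t$: these are justified by the fact that $\mathfrak l$, being a Levi subalgebra, contains the semisimple and nilpotent parts of each of its elements, so the Jordan decomposition is intrinsic. Besides this, everything reduces to the elementary facts that sums and differences of commuting semisimple elements are semisimple and that a semisimple nilpotent element vanishes, together with careful bookkeeping of the homogeneous decomposition of $\mathfrak l$.
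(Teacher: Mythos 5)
Your proof is correct. Note that the paper itself does not prove this proposition: it is recalled from Vinberg's original work (\cite[pag.\ 471]{MR0430168}) with no argument given, so there is no internal proof to diverge from; your argument is a sound, self-contained substitute. Both directions are handled properly: the forward direction by applying maximality to $\fzeta(\fc_\fg(\fc))_1$ and by deriving a contradiction from a nonzero semisimple element of $[\fc_\fg(\fc),\fc_\fg(\fc)]_1$, and the converse by writing $w=c+t$ with respect to $\fc_\fg(\fc)_1=\fc\oplus[\fc_\fg(\fc),\fc_\fg(\fc)]_1$ and observing that $t$ is simultaneously semisimple and nilpotent in $\fg$, hence zero. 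One imprecision worth fixing: your justification that $\fzeta(\fc_\fg(\fc))$ consists of semisimple elements of $\fg$ should not be ``it is the centre of a reductive algebra'' --- abstract reductivity is not enough (the line $\CC e$ spanned by a nilpotent $e$ is an abstractly reductive subalgebra of $\fg$ whose centre consists of nilpotent elements). The correct reason, which you do invoke later for the Jordan-decomposition transfers, is that $\fc_\fg(\fc)$ is a \emph{Levi} subalgebra: it contains a Cartan subalgebra $\fh$ of $\fg$ (extend the toral span of $\fc$ to a Cartan subalgebra), $\fh$ is then also a Cartan subalgebra of $\fc_\fg(\fc)$ and is self-centralizing, so $\fzeta(\fc_\fg(\fc))\subset\fh$ consists of semisimple elements of $\fg$. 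With this adjustment the argument is complete.
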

Let $\fc$ be a Cartan subspace. By the previous result and equation \eqref{eq:centralizerdecomp} for $R=\fc$, we have a decomposition
$\fc_{\fg}(\fc)_1=\fc\bigoplus [\fc_\fg(\fc),\fc_\fg(\fc)]_1$,
with $\fc\subset \mathcal{S}_V$  and  $ [\fc_\fg(\fc),\fc_\fg(\fc)]_1\subset{\mathcal N}_V$. 
In other words, this decomposition gives the Jordan components  of any
element of 
$\fc_{\fg}(\fc)_1$. 
\begin{corollary}
\label{cor:serveanchedopo}
For any $x\in\fc$, we have $\fzeta(\fg^{x})_1\subset\fc$.
\end{corollary}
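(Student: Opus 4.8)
The plan is to fix an arbitrary $y\in\fzeta(\fg^{x})_1$ and to show that $y\in\fc$; since $y$ is arbitrary, this yields the inclusion $\fzeta(\fg^{x})_1\subset\fc$. The two ingredients I would rely on are that $\fg^{x}$ is reductive (because $x\in\fc\subset\mathcal S_V$ is semisimple, so $\fg^{x}$ is the Levi subalgebra $\fg^{x}=\fzeta(\fg^{x})\oplus[\fg^{x},\fg^{x}]$ discussed in \S\ref{subsec:gla}) and the decomposition $\fc_\fg(\fc)_1=\fc\oplus[\fc_\fg(\fc),\fc_\fg(\fc)]_1$ recorded just before the statement, which realises the Jordan decomposition of any element of $\fc_\fg(\fc)_1$.

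First I would observe that, since $x$ is semisimple, $\fg^{x}$ is reductive and hence its centre $\fzeta(\fg^{x})$ consists of semisimple elements; in particular $y\in\mathcal S_V$. Next, because $\fc$ is abelian and $x\in\fc$, every element of $\fc$ commutes with $x$, so $\fc\subset\fg^{x}$. As $y$ lies in the centre of $\fg^{x}$, it commutes with all of $\fc$, whence $y\in\fc_\fg(\fc)$ and, being homogeneous of degree $1$, $y\in\fc_\fg(\fc)_1$. Now I write $y=y_s+y_n$ according to the decomposition $\fc_\fg(\fc)_1=\fc\oplus[\fc_\fg(\fc),\fc_\fg(\fc)]_1$, which by the discussion following Proposition \ref{prop:Cartansubspaceandcentralizerofc} is precisely the Jordan decomposition of $y$ (semisimple part in $\fc$, nilpotent part in $[\fc_\fg(\fc),\fc_\fg(\fc)]_1$). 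Since $y$ is semisimple, its nilpotent part vanishes and $y=y_s\in\fc$, as desired.

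The only genuinely nontrivial point is the semisimplicity of $y$, i.e. the fact that $\fzeta(\fg^{x})\subset\mathcal S$; this holds because $\fzeta(\fg^{x})$ is contained in any Cartan subalgebra of $\fg^{x}$ (equivalently of $\fg$, as $\fg^{x}$ is a Levi subalgebra), whose elements are semisimple. Should one prefer to bypass the Jordan-decomposition interpretation of the decomposition $\fc_\fg(\fc)_1=\fc\oplus[\fc_\fg(\fc),\fc_\fg(\fc)]_1$, an alternative closing argument is available: once $y\in\mathcal S_V$ is known to commute with $\fc$, the space $\fc+\CC y$ is an abelian subspace of $V$ consisting of semisimple elements (a sum of commuting semisimple elements is semisimple) and containing $\fc$, so maximality of the Cartan subspace $\fc$ forces $\fc+\CC y=\fc$, i.e. $y\in\fc$.
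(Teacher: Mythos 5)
Your proof is correct and follows essentially the same route as the paper's: the paper's one-line argument is exactly that $\fzeta(\fg^{x})$ consists of semisimple elements, so $\fzeta(\fg^{x})_1\subset \fc_{\fg}(\fc)_1\cap\mathcal{S}_V=\fc$, where the last equality uses the Jordan decomposition $\fc_\fg(\fc)_1=\fc\oplus[\fc_\fg(\fc),\fc_\fg(\fc)]_1$ --- precisely the two ingredients you spell out. Your alternative closing via maximality of the Cartan subspace is a harmless minor variant, but the core argument coincides with the paper's.
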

\begin{proof}
Since
$\fzeta(\fg^{x})$
consists of semisimple elements, it follows that
$\fzeta(\fg^{x})_1\subset \fc_{\fg}(\fc)_1\cap\mathcal{S}_V=\fc$.
\end{proof}
Before turning to the next subsection, we recall that the Weyl group in the sense of Vinberg
is the group $W_{Vin}=W_{Vin}(\fg,\theta)$ of linear transformations of $\fc$ given by
$W_{Vin}\cong N_{G_0}(\fc)/Z_{G_0}(\fc)$,
where $N_{G_0}(\fc)$ (resp. $Z_{G_0}(\fc)$) is the normalizer (resp. centralizer) of $\fc$ in $G_0$.
\begin{theorem}\cite[pag. 473]{MR0430168}
\label{theorem:weylvinberg}
The group $W_{Vin}$ is finite and  for $x,\,y\in \fc$ we have $y\in\Oc_x^{G_0}$ if and only if $y\in W_{Vin}\cdot x$.
\end{theorem}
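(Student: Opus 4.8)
The plan is to prove the two assertions in sequence, beginning with the conjugacy criterion, whose proof needs only Theorem \ref{thm:472} and no finiteness, and then to deduce finiteness from a Lie-algebra computation. The ``if'' direction is immediate: if $y\in W_{Vin}\cdot x$ then $y=n\cdot x$ for some $n\in N_{G_0}(\fc)\subseteq G_0$, so $y\in\Oc_x^{G_0}$. For the converse I would take $g\in G_0$ with $y=g\cdot x$, both $x,y\in\fc$, and exploit the fact that $\fc$ is abelian and semisimple: since $y\in\fc$ one has $\fc\subseteq\fg^y$, and since $y=g\cdot x\in g\cdot\fc$ one has $g\cdot\fc\subseteq\fg^y$, so both $\fc$ and $g\cdot\fc$ are maximal abelian subspaces of semisimple elements of $\fg^y_1$, i.e. Cartan subspaces of the reductive $\ZZ_m$-graded Lie algebra $\{\fg^y,\theta\}$ (recall $y$ is semisimple, so $\fzeta(\fg^y)$ is symmetrically graded). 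Letting $H$ be the $\theta$-group of $\{\fg^y,\theta\}$, i.e. the connected subgroup of $G_0$ with Lie algebra $\fg^y_0$, which centralizes $y$, Theorem \ref{thm:472} applied to $\{\fg^y,\theta\}$ yields $h\in H$ with $h\cdot(g\cdot\fc)=\fc$. Then $n:=hg$ normalizes $\fc$, so $n\in N_{G_0}(\fc)$, and $n\cdot x=h\cdot y=y$, giving $y\in W_{Vin}\cdot x$.

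For finiteness the plan is to show that $N_{G_0}(\fc)$ and $Z_{G_0}(\fc)$ share a Lie algebra, i.e. $\mathfrak n_{\fg_0}(\fc):=\{X\in\fg_0: [X,\fc]\subseteq\fc\}$ equals $\fc_{\fg_0}(\fc):=\{X\in\fg_0:[X,\fc]=0\}$; then $N_{G_0}(\fc)^\circ=Z_{G_0}(\fc)^\circ\subseteq Z_{G_0}(\fc)$ and the quotient $W_{Vin}$, being an algebraic group with trivial identity component and finitely many components, is finite. Given $X\in\mathfrak n_{\fg_0}(\fc)$, I would first note that its Jordan parts $X_s,X_n$ again lie in $\fg_0$ and preserve $\fc$ (being polynomials in $\ad X$ with no constant term), so it suffices to treat them separately.

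For $X_s$ the operator $\ad(X_s)|_\fc$ is semisimple, and I would rule out nonzero eigenvalues: an eigenvector $c_\lambda\in\fc$ with $[X_s,c_\lambda]=\lambda c_\lambda$, $\lambda\neq0$, gives $[\ad X_s,\ad c_\lambda]=\lambda\,\ad c_\lambda$, hence $\operatorname{tr}((\ad c_\lambda)^k)=0$ for all $k\geq1$ and $\ad c_\lambda$ is nilpotent; as $c_\lambda\in\fc\subseteq\mathcal S_V$ is semisimple this forces $c_\lambda\in\fzeta(\fg)$, so $[X_s,c_\lambda]=0$, a contradiction. Thus $\ad(X_s)|_\fc=0$.

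The delicate point, which I expect to be the main obstacle, is the nilpotent part $X_n$, for which I would use a closed-orbit argument. For $c\in\fc$ the curve $\exp(tX_n)\cdot c=\sum_{j\geq0}\tfrac{t^j}{j!}(\ad X_n)^j c$ is a polynomial in $t$ lying entirely in $\fc$, and all its values are $G_0$-conjugate to $c$, hence semisimple. If $(\ad X_n)c\neq0$, choose $d\geq1$ maximal with $z:=(\ad X_n)^d c\neq0$; then $z\in\fc$, and comparing top-degree coefficients in $t$ in the identity $p(\exp(tX_n)\cdot c)=p(c)$ for each homogeneous $G$-invariant $p$ of positive degree forces $p(z)=0$, so $z$ lies in the nilpotent cone. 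But $z\in\fc\subseteq\mathcal S_V$ yields $z=0$, a contradiction; hence $[X_n,c]=0$ for all $c$. Combining the two cases gives $X\in\fc_{\fg_0}(\fc)$, completing the argument. Apart from this nilpotency step and the verification that the Cartan-subspace hypotheses of Theorem \ref{thm:472} transfer to $\{\fg^y,\theta\}$, the proof is formal.
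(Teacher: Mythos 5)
Your proposal is correct, but note that the paper itself contains no proof of this statement: it is quoted from Vinberg \cite[pag.~473]{MR0430168}, so the comparison can only be with the classical argument. Your treatment of the conjugacy assertion is exactly that classical argument: descend to the centralizer $\fm=\fg^y$, observe that $\fc$ and $g\cdot\fc$ are Cartan subspaces of the reductive graded algebra $\{\fg^y,\theta\}$ (maximality inside $\fg^y_1$ is inherited from maximality in $V$, and the symmetric grading of $\fzeta(\fg^y)$ holds because $y$ is semisimple, as recorded in \S\ref{subsec:gla}), then apply Theorem \ref{thm:472} to $\{\fg^y,\theta\}$ and use that its $\theta$-group fixes $y$ to manufacture an element of $N_{G_0}(\fc)$ sending $x$ to $y$. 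Where you genuinely diverge is the finiteness half. Your route --- reduce to $\mathfrak{n}_{\fg_0}(\fc)=\fc_{\fg_0}(\fc)$, split $X=X_s+X_n$, kill $X_s$ by the trace/Newton-identity argument and $X_n$ by comparing top coefficients of invariant polynomials along the curve $\exp(tX_n)\cdot c$ --- is valid at every step, but it is much heavier than necessary. Since $\fc$ consists of commuting semisimple elements, $\ad\fc$ is simultaneously diagonalisable and one has the restricted-root decomposition $\fg=\fc_\fg(\fc)\oplus\bigoplus_{\sigma\in\Sigma}\fg_\sigma$ of \eqref{eq:centralizerss}, already used in Example \ref{rem:G0regular-in-V}. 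Writing $X=X_0+\sum_{\sigma}X_\sigma$ accordingly, for any $c\in\fc$ one gets $[c,X]=\sum_{\sigma}\sigma(c)X_\sigma\in\bigoplus_{\sigma\in\Sigma}\fg_\sigma$, while the normalizing hypothesis places $[c,X]$ in $\fc\subset\fc_\fg(\fc)$; directness of the sum forces $[c,X]=0$. This one-line computation replaces both of your Jordan-decomposition cases and avoids invariant theory entirely; the remainder (equal Lie algebras, hence $N_{G_0}(\fc)^\circ=Z_{G_0}(\fc)^\circ$, hence $W_{Vin}$ is a quotient of the finite component group of $N_{G_0}(\fc)$) is as you say. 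So: correct proof, standard first half, and a second half that works but buys nothing over the short argument --- though the invariant-polynomial trick along a unipotent orbit is a technique worth keeping.
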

There is a geometric counterpart to this result \cite[\S 4]{MR0430168}. 
The restriction $\CC[V]\to \CC[\fc]$  of polynomial functions  from $V$ to $\fc$
induces a ``Chevalley-type'' isomorphism $\CC[V]^{G_0}\cong \CC[\fc]^{W_{Vin}}$
and each fiber of the ``Steinberg quotient map'' 
$\varphi\colon V\to V/\!/ G_0\cong \fc/W_{Vin}$
consists of finitely many $G_0$-orbits. Here  $V/\!/ G_0$
is the GIT quotient of $V$, and two elements of $V$ fail to be separated by the invariants if and only if their semisimple parts lie in the same $G_0$-orbit. Recall that semisimple (resp. nilpotent) orbits can also be characterized as the closed orbits (resp. orbits whose closure contains $0$). Hence, each fiber of $\varphi$ contains exactly one closed orbit.

\subsection{Dimensions of centralizers and regularity conditions}
\label{subsec:counterexample}
\label{subsec:reg-conditions}
This subsection deals with some 
general observations, which encompas a classical
result of Kostant and Rallis (see \cite{MR0311837} and also \cite{MR2111215}),
and motivates the introduction of two distinct notions of regularity.
\begin{proposition}
\label{prop:dimensions}
Let $\{\fg,\theta\}$ be a  reductive $\ZZ_m$-graded Lie algebra (with symmetrically graded center, as usual). Then
$\dim\fg_l-\dim\fg_l^x=\dim\fg_{-l-1}-\dim\fg_{-l-1}^x$
for all $x\in V$ and $l\in\ZZ_m$. 
\begin{proof}
Let $\k$ be an adapted bilinear form on $\fg$.
The bilinear form  given by
$\k_x(y,z):=\k(x,[y,z])$
 is skew-symmetric for all $y,z\in\fg$ and its radical is the centralizer 
$
\fg^x
$, 
which is homogeneous. It induces a non-degenerate bilinear form on the quotient
$\fg/\fg^x=\bigoplus_{l\in\ZZ_m}\fg_l/\fg^x_l$
with the property that 
$\fg_i/\fg^x_i\perp\fg_l/\fg^x_l$ if $i+l+1\neq 0$,
in particular $\fg_l/\fg^x_l\cong (\fg_{-l-1}/\fg^x_{-l-1})^*$.
\end{proof}
\end{proposition}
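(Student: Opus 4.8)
The plan is to exploit the fact that the map $y \mapsto \k(x, [y, \cdot])$ encodes the infinitesimal orbit map, so that $\fg^x$ appears as a radical of a natural alternating form built from $x$. Concretely, I would introduce the bilinear form $\k_x(y,z) := \k(x, [y,z])$ on $\fg$. Using the $\fg$-invariance of $\k$, i.e. $\k([a,b],c) = \k(a,[b,c])$, one sees immediately that $\k_x(y,z) = \k(x,[y,z]) = \k([x,y],z) = -\k([y,x],z) = -\k_x(z,y)$ up to the antisymmetry of the bracket; in any case $\k_x$ is skew-symmetric. The first key observation is to identify its radical: $y$ lies in the radical precisely when $\k(x,[y,z]) = 0$ for all $z$, equivalently $\k([x,y],z)=0$ for all $z$, and by non-degeneracy of $\k$ this means $[x,y]=0$, i.e. $y \in \fg^x$. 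So $\operatorname{rad}(\k_x) = \fg^x$, which is homogeneous since $x \in V = \fg_1$ and $\fg^x$ is $\theta$-stable.

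Next I would pass to the quotient $\fg/\fg^x$, on which $\k_x$ descends to a \emph{non-degenerate} alternating form. The grading decomposes this quotient as $\bigoplus_{l\in\ZZ_m} \fg_l/\fg^x_l$, and the crucial step is to compute how $\k_x$ pairs the graded pieces. Since $x \in \fg_1$, for $y \in \fg_i$ and $z \in \fg_l$ we have $[y,z] \in \fg_{i+l}$ and $\k(x, [y,z])$ can be non-zero only when $\fg_1$ pairs non-trivially against $\fg_{i+l}$ under $\k$; because $\k$ pairs $\fg_a$ with $\fg_b$ nontrivially only if $a+b = 0$ in $\ZZ_m$, this forces $1 + (i+l) = 0$, i.e. $i + l + 1 = 0$ in $\ZZ_m$. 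Hence $\fg_i/\fg^x_i$ is orthogonal to $\fg_l/\fg^x_l$ whenever $i+l+1 \neq 0$, exactly the statement displayed in the proof. The remaining pairing is therefore a perfect pairing between $\fg_l/\fg^x_l$ and $\fg_{-l-1}/\fg^x_{-l-1}$, giving the isomorphism $\fg_l/\fg^x_l \cong (\fg_{-l-1}/\fg^x_{-l-1})^*$.

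Finally, I would conclude by taking dimensions. The isomorphism above gives $\dim(\fg_l/\fg^x_l) = \dim(\fg_{-l-1}/\fg^x_{-l-1})$, that is $\dim\fg_l - \dim\fg_l^x = \dim\fg_{-l-1} - \dim\fg_{-l-1}^x$, which is precisely the claimed identity. The main subtlety — the step I would treat most carefully — is verifying that $\k_x$ genuinely descends to a \emph{non-degenerate} form on the full quotient $\fg/\fg^x$, and in particular that the radical is \emph{exactly} $\fg^x$ and not merely contained in it; this is where the non-degeneracy of the adapted form $\k$ (guaranteed by our standing hypothesis that $\fzeta(\fg)$ is symmetrically graded, via Lemma \ref{lem:adapted-bilinear-form}) is indispensable. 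Once non-degeneracy on the quotient is secured, the orthogonality of non-complementary graded pieces makes the perfect pairing between $\fg_l/\fg^x_l$ and $\fg_{-l-1}/\fg^x_{-l-1}$ automatic, and the dimension count is immediate.
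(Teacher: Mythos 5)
Your proposal is correct and is essentially the paper's own proof: both define the skew-symmetric form $\k_x(y,z)=\k(x,[y,z])$, identify its radical with the (homogeneous) centralizer $\fg^x$, and use $\theta$-invariance of the adapted form $\k$ to see that the induced non-degenerate form on $\fg/\fg^x$ pairs $\fg_l/\fg^x_l$ perfectly with $\fg_{-l-1}/\fg^x_{-l-1}$. The only cosmetic remark is that skew-symmetry of $\k_x$ follows directly from antisymmetry of the bracket, so your detour through invariance there is unnecessary (invariance is what you need, correctly, for the radical computation).
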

\begin{corollary}
\label{corollaryK-R}
\vskip0.1cm\par\noindent
\begin{itemize}
\item[$(i)$]
For all $x\in V$ we have
$\dim\O^G_x=2\dim\O_x^{G_0}+\sum_{l\neq -1, 0}\big(\!\dim\fg_l-\dim\fg^x_l\big)$;
\item[$(ii)$] 
	If $x\in\mathcal S_V$, then 
$
\dim\fg_l-\dim\fg_l^x=\dim\fg_{l+1}-\dim\fg_{l+1}^x
$
is independent of $l\in\mathbb Z_m$ and we have $\dim\O_x^G=m\dim\O_x^{G_0}$;
\item[$(iii)$] Let $x\in V$, then $\fg^x_0=\fg_0$ if and only if $x\in\fzeta(\fg)_1$.
\end{itemize}
\end{corollary}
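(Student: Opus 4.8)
The plan is to reduce all three statements to the single family of integers $d_l:=\dim\fg_l-\dim\fg_l^x$. Since $\ad x$ sends $\fg_l$ into $\fg_{l+1}$ with kernel $\fg_l^x$, we have $d_l=\dim[\fg_l,x]$, and summing over $l\in\ZZ_m$ gives $\dim\O^G_x=\dim\fg-\dim\fg^x=\sum_{l}d_l$. On the other hand, Lemma \ref{lem:tanort} identifies $T_x\O^{G_0}_x=[\fg_0,x]$, so $\dim\O^{G_0}_x=\dim\fg_0-\dim\fg_0^x=d_0$. The only extra ingredient is the symmetry $d_l=d_{-l-1}$ provided by Proposition \ref{prop:dimensions}.

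For $(i)$ I would split the sum $\sum_l d_l$ into the terms $l=0$, $l=-1$ and the remaining ones. Taking $l=0$ in Proposition \ref{prop:dimensions} gives $d_0=d_{-1}$, hence $\dim\O^G_x=d_0+d_{-1}+\sum_{l\neq 0,-1}d_l=2d_0+\sum_{l\neq 0,-1}d_l$, and substituting $d_0=\dim\O^{G_0}_x$ yields the stated formula.

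For $(ii)$ the key point is that when $x\in\mathcal{S}_V$ the operator $\ad x$ is semisimple, so $\fg=\fg^x\oplus[\fg,x]$ and this splitting is homogeneous. Since $\ad x$ is injective on $[\fg,x]$ and raises degree by one, I would check that it restricts to an isomorphism $[\fg,x]_l\xrightarrow{\ \sim\ }[\fg,x]_{l+1}$: injectivity is immediate, and surjectivity follows by decomposing a preimage into homogeneous components and using injectivity to discard all but one. Hence $d_l=\dim[\fg,x]_l$ is independent of $l$, which is the asserted equality, and then $\dim\O^G_x=\sum_l d_l=m\,d_0=m\dim\O^{G_0}_x$.

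Part $(iii)$ is where the real work lies. The implication $x\in\fzeta(\fg)_1\Rightarrow\fg_0^x=\fg_0$ is immediate, central elements being centralized by everything. Conversely $\fg_0^x=\fg_0$ means $d_0=0$, i.e. $[\fg_0,x]=0$. Writing $x=x'+x''$ with $x'\in\fzeta(\fg)_1$ and $x''\in\fs_1$, the central part drops out and $[\fg_0,x'']=0$; splitting $x''=x''_s+x''_n$ into Jordan parts, each commutes with $\fg_0$, hence with $\fs_0$. I would dispatch the semisimple part using $(ii)$ applied to $\{\fs,\theta\}$: from $\dim\fs_0-\dim\fs_0^{x''_s}=0$ and the $l$-independence it follows that $\ad x''_s$ vanishes on every $\fs_l$, so $x''_s\in\fzeta(\fs)=0$. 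For the nilpotent part I would invoke the homogeneous Jacobson--Morozov theorem to embed $x''_n$ in an $\fsl_2$-triple with semisimple element $h\in\fs_0\subset\fg_0$; then $[h,x''_n]=2x''_n$, which forces $x''_n=0$ because $[\fg_0,x''_n]=0$. Thus $x''=0$ and $x=x'\in\fzeta(\fg)_1$. The main obstacle is precisely this last step, namely showing that a nonzero element of $\fs_1$ cannot be centralized by all of $\fg_0$: the semisimple case is handled by the rigidity of $(ii)$, but the nilpotent case genuinely requires the graded $\fsl_2$-theory, since the symmetry of Proposition \ref{prop:dimensions} alone only propagates vanishing from degree $0$ to degree $-1$ and cannot by itself exclude a nonzero graded nilpotent centralized by $\fg_0$.
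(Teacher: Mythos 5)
Your proof is correct, and while part (i) coincides with the paper's argument (both are immediate from Proposition \ref{prop:dimensions} together with Lemma \ref{lem:tanort}), parts (ii) and (iii) take genuinely different routes. For (ii), the paper stays inside the bilinear-form formalism: non-degeneracy of $\k$ restricted to $\fg^x$ gives $\dim\fg^x_l=\dim\fg^x_{-l}$, which combined with $\dim\fg_l=\dim\fg_{-l}$ (Lemma \ref{lem:adapted-bilinear-form}) and the relation $d_l=d_{-l-1}$ of Proposition \ref{prop:dimensions} forces your integers $d_l=\dim\fg_l-\dim\fg^x_l$ to be constant in $l$; you instead use the homogeneous splitting $\fg=\fg^x\oplus[\fg,x]$ coming from semisimplicity of $\ad x$ and show that $\ad x$ restricts to isomorphisms between consecutive graded pieces of $[\fg,x]$. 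Your argument is more elementary and does not need the adapted form at all in this step (note only the harmless index shift: with your conventions $d_l=\dim[\fg,x]_{l+1}$ rather than $\dim[\fg,x]_l$, immaterial since these dimensions are all equal). For (iii), the paper's converse direction invokes the classical result (cited from \cite[pag. 116]{MR1349140}) that the centralizer $\fc_\fg(\fh_0)$ of a Cartan subalgebra $\fh_0$ of $\fg_0$ consists of semisimple elements, so $x$ is semisimple outright and (ii) finishes the job; you bypass this citation by splitting into Jordan parts, killing the semisimple part with (ii) applied to $\{\fs,\theta\}$ and the nilpotent part with the graded Jacobson--Morozov theorem, producing $h\in\fs_0\subset\fg_0$ with $[h,x''_n]=2x''_n$. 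Both approaches are sound: the paper's is shorter given the cited classical fact, while yours is self-contained modulo graded $\fsl(2)$-theory, a tool the paper itself uses freely in \S\ref{sec:slice}, and it isolates correctly where the real content lies, namely in excluding a nonzero nilpotent of $\fs_1$ centralized by all of $\fg_0$.
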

\begin{proof}
Claim (i) is immediate from Proposition \ref{prop:dimensions}. 
If $x\in\mathcal S_V$, then the restriction of $\k$ to $\fg^x$ is non-degenerate and $\dim\fg^x_l=\dim\fg^x_{-l}$ for all $l\in\mathbb Z_m$, so (ii) follows from Proposition \ref{prop:dimensions} and (i). If $x\in\fzeta(\fg)_1$, then clearly $\fg^x_0=\fg_0$.
Conversely, if $\fg^x_0=\fg_0$ then $x\in\fc_{\fg}(\fh_0)$, where $\fh_0$ is a Cartan subalgebra of $\fg_0$, and $x$ is semisimple by
a classical result, see e.g. \cite[pag. 116]{MR1349140}. Then $\fg^x=\fg$ by (ii) and $x\in\fzeta(\fg)_1$.
\end{proof}
If $x,y\in V$ are two elements with $\dim\O^{G_0}_x=\dim\O^{G_0}_y$, then $\dim\fg_l^x=\dim\fg_l^y$ for $l=0,-1$. 
The following simple example shows that the hypothesis $x,y\in\mathcal S_V$ is indeed necessary for
$\dim\fg_{l}^x=\dim\fg_{l}^{y}$ to hold also for $l\neq 0,-1$.

\begin{example}
\label{ex:tworegularityconditions}
Let $\fg$ be of type $E_8$ and $\theta$ be the automorphism of $\fg$ of order $3$ extensively studied in \cite{MR504529}.
Here $\fg_{1}\cong\Lambda^3\CC^9$, $\fg_0\cong\fsl(9)$ and 
$\fg_{-1}\cong\Lambda^3(\CC^9)^*$.
The orbits of $\mathrm{SL}(9)$ on $V=\Lambda^3\CC^9$ have been classified in \emph{loc. cit}. Let $\be_i$, for $1\leq i\leq 9$, be the canonical basis vectors of  $\CC^9$ and let $\be_{ijl}:=\be_i\wedge \be_j\wedge \be_l$. The trivector 
$
x_s=\be_{123}+\be_{456}+\be_{789}
$
is semisimple, with centralizer $\fg^{x_s}$ a reductive Lie algebra with semisimple part $\fr$ of type $E_6$. More precisely  $\fr=\fr_{-1}\oplus\fr_0\oplus\fr_1$ with 
$$
\fr_1=X\otimes Y\otimes Z\;,\quad\fr_0=\fsl(X)\oplus\fsl(Y)\oplus\fsl(Z)\;,\quad \fr_{-1}=X^*\otimes Y^*\otimes Z^*\;,
$$
where $X=\mathrm{span}\{\be_1,\be_2,\be_3\}$, $Y=\mathrm{span}\{\be_4,\be_5,\be_6\}$, $Z=\mathrm{span}\{\be_7,\be_8,\be_9\}$ and where we identified tensor products with subspaces of $\fg_{\pm 1}$ by mapping pure tensors to the corresponding antisymmetrizations. Since $\fg^{x_s}$ has maximal rank, its center is two-dimensional and it is not difficult to see that it consists of $x_s\in \fg_1$ and $x_s^*\in\fg_{-1}$.

Now $x_s$ is the semisimple part of trivectors $x=x_s+x_n$ in the VI family, cf. \cite[Table 5]{MR504529}. We consider those trivectors for which $\dim\O^{G_0}_x=76$, i.e., $x=x_s+x_n$ with nilpotent part:
\begin{itemize}
\item[] {\bf Class} $7$: $x_n=\be_{149}+\be_{158}+\be_{167}+\be_{248}+\be_{357}$;
\item[] {\bf Class} $8$: $x_n=\be_{149}+\be_{167}+\be_{258}+\be_{347}$;
\item[] {\bf Class} $9$: $x_n=\be_{147}+\be_{158}+\be_{258}+\be_{269}$.
\end{itemize}
In all the three cases $\dim\fg^{x}_0=4$ and $\dim\fg^{x}_{-1}=8$ by Proposition \ref{prop:dimensions}.
However a direct computation tells us that
$\fg^x_{1}=\left\{y\in\fg^{x_s}_1\mid y\wedge x_n=0\right\}$
has dimension $6$, $8$ and $10$, respectively.
\end{example}
Corollary \ref{corollaryK-R} and Example \ref{ex:tworegularityconditions} motivate the following. 
\begin{definition}
\label{def:regularity}
For any subset $A\subset V$, we set 
\begin{itemize}
	\item[(i)] $A^{reg}=\Big\{x\in A\mid \dim\fg^{x}\leq \dim\fg^{y}\;\text{for all}\;y\in A\Big\}$;
	\item[(ii)] $A^\bullet=\Big\{x\in A\mid \dim\fg^{x}_0\leq\dim\fg^y_0\;\text{for all}\;y\in A\Big\}$. 
\end{itemize}
The subset $A^{reg}$ (resp., $A^\bullet$) is called the regular part (resp., the $G_0$-regular part) of $A$.
\end{definition}
Note that $A^\bullet=\left\{x\in A\mid \dim\fg^{x}_{-1}\leq\dim\fg^y_{-1}\;\text{for all}\;y\in A\right\}$ due to Lemma \ref{lem:tanort}.
A simple relation between the two notions is given by the following.
\begin{lemma}
\label{lemma:regvsbullet}
Let $A$ be an irreducible subset of $V$ (w.r.t. the induced topology). Then
\begin{equation}
\label{eq:regvsbullet}
A^{reg}=\bigcap_{l\in\mathbb Z_m}\left\{x\in A\mid \dim\fg^x_l\leq\dim\fg^y_l\;\text{for all}\;y\in A\right\}
\end{equation}
and so  $A^{reg}\subset A^{\bullet}$ as a Zariski open subset.
\end{lemma}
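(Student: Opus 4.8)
The plan is to rewrite both sides of \eqref{eq:regvsbullet} in terms of the graded pieces of the centralizer. For each $l\in\ZZ_m$ I set $d_l:=\min_{y\in A}\dim\fg^y_l$ and $U_l:=\{x\in A\mid\dim\fg^x_l=d_l\}$, so that the right-hand side of \eqref{eq:regvsbullet} is precisely $\bigcap_{l\in\ZZ_m}U_l$. The first observation is that, writing $\fg^x_l=\ker\big(\ad(x)|_{\fg_l}\colon\fg_l\to\fg_{l+1}\big)$ and noting that $x\mapsto\ad(x)|_{\fg_l}$ is linear, the function $x\mapsto\dim\fg^x_l$ is upper semicontinuous on $A$; consequently each $U_l$ is a nonempty Zariski open subset of $A$.

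The crux --- and the only place where irreducibility of $A$ is used --- is to guarantee that the various graded pieces can be made minimal simultaneously. Indeed, in an irreducible variety any finite intersection of nonempty open subsets is again nonempty (in fact dense), and since $\ZZ_m$ is finite this yields that $\bigcap_{l\in\ZZ_m}U_l$ is a nonempty open subset of $A$. Evaluating at any point $x$ of this intersection gives $\dim\fg^x=\sum_{l}d_l$, whereas $\dim\fg^y=\sum_l\dim\fg^y_l\geq\sum_l d_l$ for every $y\in A$; hence $\min_{y\in A}\dim\fg^y=\sum_l d_l$ and $\bigcap_l U_l\subseteq A^{reg}$.

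For the opposite inclusion I would take $x\in A^{reg}$, so that $\dim\fg^x=\sum_l d_l$ by the identity just established. As $\dim\fg^x_l\geq d_l$ for each $l$ while the sum of the $\dim\fg^x_l$ equals $\sum_l d_l$, every inequality is forced to be an equality, so $x\in U_l$ for all $l$ and thus $x\in\bigcap_l U_l$; this proves \eqref{eq:regvsbullet}. The final assertion is then immediate: $A^{reg}=\bigcap_l U_l\subseteq U_0=A^\bullet$, and $A^{reg}$, being open in $A$, is a fortiori a Zariski open subset of $A^\bullet$.
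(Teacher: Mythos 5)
Your proof is correct and takes essentially the same approach as the paper: one checks that each set on the right-hand side of \eqref{eq:regvsbullet} is a nonempty Zariski open subset of $A$, uses irreducibility to conclude that their finite intersection is nonempty, and identifies that intersection with $A^{reg}$. The paper's proof is simply a terser version of yours, leaving implicit the two-inclusion bookkeeping (that a point minimizing every $\dim\fg^x_l$ minimizes $\dim\fg^x$, and conversely that a minimizer of the sum must minimize each graded summand) which you spell out via the simultaneous-minimization argument.
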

\begin{proof}
Clearly each subset on the R.H.S of \eqref{eq:regvsbullet} is non-empty and Zariski open in $A$. Since $A$ is irreducible, the (finite) intersection of all such subsets is non-empty, so equal to $A^{reg}$.
\end{proof}

Let $\Sigma$ be the set of restricted roots, that is, the non-zero linear functions on $\fc$ occurring
in the weight space decomposition of the semisimple action of the abelian subalgebra $\fc$ on $\fg$. We write
$\fg=\fc_{\fg}(\fc)\oplus\bigoplus_{\sigma\in\Sigma}\fg_{\sigma}$.

\begin{example}
\label{rem:G0regular-in-V}
A semisimple element $y\in\mathcal S_V$ belongs to $\mathcal S_V^{reg}$ if and only if it lies in $\mathcal S_V^\bullet$ if and only if $\dim\fg^y=\dim\fc_\fg(\fc)$. The first equivalence follows form   Corollary \ref{corollaryK-R} (ii). For the second one, observe that $y$ is
$G_0$-conjugated to some $x\in\fc$, whose
centralizer is stabilized by $\fc$ and decomposes as
\begin{equation}
\label{eq:centralizerss}
\fg^x=\fc_{\fg}(\fc)\oplus\bigoplus_{\sigma\in\Sigma(x)}\fg_{\sigma}
\end{equation}
with $\Sigma(x)=\left\{\sigma\in\Sigma\mid\sigma(x)=0\right\}$. Hence, 
$x\in\mathcal S_V^{reg}$ if and only if $\sigma(x)\neq 0$ for all $\sigma\in\Sigma$, i.e., $\fg^x=\fc_\fg(\fc)$.
\end{example}
\begin{example}
\label{ex:elements-general-position}
Contrarily to the ungraded case and the $(m=2)$-case, an element $x_s\in\fc\cap\mathcal S_V^{reg}$ is not necessarily in $V^{\bullet}$
(let alone $V^{reg}$ or $\fg^{reg}$, since $\fg^{reg}\cap V\subset V^{reg}\subset V^{\bullet}$).
In general, $x_s$ extends to an element $x=x_s+x_n\in V^{\bullet}$ 
where $x_n$ is  an element in general position in $[\fc_{\fg}(\fc),\fc_{\fg}(\fc)]_1$
(recall that $[\fc_{\fg}(\fc),\fc_{\fg}(\fc)]_1$ consists of nilpotent elements). Then, if $[\fc_{\fg}(\fc),\fc_{\fg}(\fc)]_1\neq0$, also $x_n\neq0$ and so
$\fg_0^x\subsetneq \fg^{x_s}_0$ due to (iii) of Corollary \ref{corollaryK-R} applied to the reductive Lie algebra $\fg^{x_s}$. 
The $G_0$-orbits in $V^\bullet$ have codimension in $V$ equal to the rank of $\{\fg,\theta\}$, hence
$\dim\fg_0^x=\dim\fg_0-\dim V+\dim\fc$, see \cite[Theorem 5]{MR0430168}.
\end{example}
\subsection{Jordan classes and sheets for \texorpdfstring{$\theta$}{}-groups}
\label{subsec:Jordanclasses}
V.\ L.\ Popov has recently generalized the notion of a Jordan class to the case of semisimple $\ZZ_m$-graded Lie algebras $\{\fg,\theta\}$
and studied its main geometric properties in \cite{MR3890218}. For $m=1,2$, the notion coincides with that studied in \cite{BH, MR2146652}.
We here briefly extend his general treatment to the reductive case, which is more suitable for our inductive and local arguments of \S\ref{sec:closure-Jordan-classes}-\S\ref{sec:slice-parametrization}, and directly refer to \cite[\S 3]{MR3890218} for more details.
(We warn the reader that the symbol ``reg'' in \cite{MR3890218} is replaced by ``$\bullet$'' in the present paper.)

Let $\{\fg,\theta\}$ be a  reductive $\ZZ_m$-graded Lie algebra.
Two elements $x=x_s+x_n$ and $y=y_s+y_n$ of $V$
are {\it $G_0$-Jordan equivalent} if there exists $g\in G_0$ such that
\begin{equation}
\label{eq:JCV}
\fg^{y_s}=g\cdot\fg^{x_s}\;, \quad y_n=g\cdot x_n\,.
\end{equation}
This is an equivalence relation $x\overset{G_0}{\sim}y$ on $V$, the equivalence class $J_{G_0}(x)$ of $x\in V$ is called the {\it $G_0$-Jordan class} of $x$ in $V$. Evidently the union of all $G_0$-Jordan classes in $V$ is a partition of $V$.
\begin{rem}
\label{rem:definition-Jordan-classes}
\begin{enumerate}
\item By construction any $G_0$-Jordan class is a $G_0$-stable set consisting of $G_0$-orbits of the same dimension. For example $\mathcal S_V^{reg}$ constitutes a $G_0$-Jordan class, as it can be easily seen from Theorem \ref{thm:472} and Example \ref{rem:G0regular-in-V}. 
\item\label{item:centro} The equality  $\fg^{x_s}=\fg^{z+x_s}$ for any $z\in\fzeta(\fg)_1$ and $x\in\fg_1$ implies that $z+x \overset{G_0}{\sim}x$, so  the additive group underlying $\fzeta(\fg)_1$ acts on each $G_0$-Jordan class $J_{G_0}(x)$ by translations. 
\item Since $G_0=Z(G)_0^\circ S_0^\circ$, the element $g$ from  \eqref{eq:JCV} can always be chosen in $S_0^\circ$. Then, for $x=z+x'\in \fzeta(\fg)_1\oplus\fs_1$ and $y=w+y'\in \fzeta(\fg)_1\oplus\fs_1$, 
the statement $x\overset{G_0}{\sim}y$ holds if and only if $x'\overset{S^\circ_0}{\sim}y'$ holds and the decomposition of $V=\fzeta(\fg)_1\oplus \fs_1$ induces a decomposition 
\begin{equation}\label{eq:reduction}J_{G_0}(x)=J_{G_0}(x')=\fzeta(\fg)_1\times J_{S^\circ_0}(x')\end{equation}
where $J_{S^\circ_0}(x')$ is the $S_0^\circ$-Jordan class of $x'\in\fs_1$ as introduced in \cite{MR3890218}. 
\item Equality \eqref{eq:reduction} applied to $x'\in \mathcal N_V\subset\fs_1$ gives $J_{G_0}(x')=\fzeta(\fg)_1\times\O^{G_0}_{x'}= \fzeta(\fg)_1\times\O^{S^\circ_0}_{x'}$. For $z\in\fzeta(\fg)_1$ we then get $J_{G_0}(z)=J_{G_0}(0)=\fzeta(\fg)_1$. 
\end{enumerate}
\end{rem}

Observe that if $x=x_s+x_n\in V$, then $x_n$ lies in the degree $1$ component of the homogeneous semisimple subalgebra $[\fg^{x_s},\,\fg^{x_s}]$.
\begin{lemma}
\label{lem:doublecentrII}
We have
$\fzeta(\fg^x)=\fzeta(\fg^{x_s})\oplus\fzeta(\fg^{x_n}\cap [\fg^{x_s},\fg^{x_s}])$
and the components of an element in $\fzeta(\fg^x)$ with respect to this decomposition coincide with its semisimple and nilpotent parts, respectively. 
Thus,  $\fzeta(\fg^x)_1=\fzeta(\fg^{x_s})_1\oplus\fzeta(\fg^{x_n}\cap [\fg^{x_s},\fg^{x_s}])_1$.
\end{lemma}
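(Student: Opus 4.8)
The plan is to reduce everything to the standard decomposition of $\fg^x$ into the centralizer of the semisimple part and the centralizer of $x_n$ inside the reductive algebra $\fg^{x_s}$. First I would record the familiar Jordan-theoretic fact that
\[ \fg^x=\fg^{x_s}\cap\fg^{x_n}, \]
which holds because $\ad x_s$ and $\ad x_n$ are polynomials without constant term in $\ad x$, so anything commuting with $x$ commutes with both $x_s$ and $x_n$, and the reverse inclusion is obvious. Since $x_s$ is semisimple, $\fg^{x_s}$ is reductive, hence $\fg^{x_s}=\fzeta(\fg^{x_s})\oplus[\fg^{x_s},\fg^{x_s}]$, and by the observation preceding the lemma $x_n\in[\fg^{x_s},\fg^{x_s}]_1$.

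Next I would intersect with $\fg^{x_n}$. Because $\fzeta(\fg^{x_s})$ is central in $\fg^{x_s}$ and $x_n\in\fg^{x_s}$, one has $\fzeta(\fg^{x_s})\subseteq\fg^{x_n}$; decomposing an arbitrary $y\in\fg^{x_s}\cap\fg^{x_n}$ along the reductive splitting and using that $[\,\cdot\,,x_n]$ preserves the ideal $[\fg^{x_s},\fg^{x_s}]$ then yields
\[ \fg^x=\fzeta(\fg^{x_s})\oplus\big([\fg^{x_s},\fg^{x_s}]\cap\fg^{x_n}\big). \]
Both summands are ideals of $\fg^x$ (the first is central, the second is closed under the bracket and $\ad$-stable), so they bracket-commute and the centre of the sum is the sum of the centres; as $\fzeta(\fg^{x_s})$ is abelian this gives exactly $\fzeta(\fg^x)=\fzeta(\fg^{x_s})\oplus\fzeta\big(\fg^{x_n}\cap[\fg^{x_s},\fg^{x_s}]\big)$.

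To finish the first assertion I must identify the two summands as the semisimple and nilpotent parts. Elements of $\fzeta(\fg^{x_s})$ are semisimple, since $\fg^{x_s}$ is a Levi subalgebra whose centre lies in a Cartan subalgebra of $\fg$ (this is already used in Corollary \ref{cor:serveanchedopo}). The key point, which I expect to be the main obstacle, is that $\fzeta\big(\fg^{x_n}\cap[\fg^{x_s},\fg^{x_s}]\big)$ consists of nilpotent elements. Writing $\fr:=[\fg^{x_s},\fg^{x_s}]$ (semisimple) and $e:=x_n$, I would embed $e$ in an $\fsl_2$-triple $(e,h,f)$ and use the resulting $\ZZ_{\geq0}$-grading of $\fr^e=\fg^{x_n}\cap\fr$ by $\ad h$-eigenvalues: the degree-zero part is the reductive piece $\fr^{\fsl_2}$, while the strictly positive part is a nilpotent ideal. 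Since $\ad h$ is a derivation preserving $\fr^e$, it preserves $\fzeta(\fr^e)$, so this centre is graded and it suffices to show its degree-zero piece vanishes. An element $z_0$ there lies in $\fr^{\fsl_2}$, so $\ad z_0$ is $\fsl_2$-equivariant on $\fr$; being central in $\fr^e$ it annihilates every highest-weight vector (these span $\fr^e$), and equivariance together with $[z_0,f]=0$ propagates this to all of $\fr$, forcing $\ad z_0=0$ and hence $z_0\in\fzeta(\fr)=0$. Thus $\fzeta(\fr^e)$ is contained in the nilpotent part of the grading.

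With both factors under control, any $w\in\fzeta(\fg^x)$ decomposes as $w=w_1+w_2$ with $w_1\in\fzeta(\fg^{x_s})$ semisimple and $w_2\in\fzeta\big(\fg^{x_n}\cap[\fg^{x_s},\fg^{x_s}]\big)$ nilpotent; since $w_1$ is central in $\fg^{x_s}\ni w_2$ the two commute, so by uniqueness of the Jordan decomposition $w_1=w_s$ and $w_2=w_n$. Finally, all of $\fg^x$, $\fg^{x_s}$, $[\fg^{x_s},\fg^{x_s}]$ and $\fg^{x_n}$ are $\theta$-stable, hence the entire decomposition is one of graded subspaces, and intersecting with $\fg_1=V$ gives the asserted identity for the degree-$1$ parts.
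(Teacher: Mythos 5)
Your proof is correct, but it takes a different route from the paper in one essential respect: the paper does not prove the ungraded decomposition at all --- it simply cites \cite[Proposition 39.1.1]{MR2146652} for the first claim and then observes, exactly as you do in your final paragraph, that the graded identity follows because $\fzeta(\fg^x)$ and both summands are homogeneous ($\theta$-stable). What you have done is reconstruct, from first principles, the content of that cited reference: the splitting $\fg^x=\fzeta(\fg^{x_s})\oplus\bigl([\fg^{x_s},\fg^{x_s}]\cap\fg^{x_n}\bigr)$ as a sum of commuting ideals, the semisimplicity of $\fzeta(\fg^{x_s})$, and --- the genuinely nontrivial point --- the nilpotency of $\fzeta\bigl(\fr^{e}\bigr)$ for $\fr=[\fg^{x_s},\fg^{x_s}]$ and $e=x_n$, which you obtain via Jacobson--Morozov, the non-negativity of the $\ad h$-grading on $\fr^e$, and the vanishing of the degree-zero part of the centre by $\fsl(2)$-equivariance of $\ad z_0$ together with $\fzeta(\fr)=0$. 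All of these steps are sound (the identification of the two summands with the Jordan components via uniqueness of the Jordan decomposition is also the standard argument). The trade-off is the usual one: the paper's proof is a one-line deferral to a standard textbook, appropriate since the lemma is not claimed as new; your version is self-contained and makes explicit precisely which structure theory (graded $\fsl(2)$-triples and the reductive/nilpotent decomposition of $\fr^e$) underlies the statement, at the cost of length and of re-proving known material.
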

\begin{proof} The first claim is \cite[Proposition 39.1.1]{MR2146652}, the second follows since $\fzeta(\fg^x)$ and its summands are homogeneous. 
\end{proof}
Lemma \ref{lem:doublecentr} tells us that 
\begin{equation}
\label{eq:fzetareg}
\begin{aligned}
(\fzeta(\fg^x)_1)^{reg}
&=\big\{y\in \fzeta(\fg^x)_1\mid \fg^y=\fg^x\big\}\\
&=\big\{y\in \fzeta(\fg^x)_1\mid \fzeta(\fg^y)=\fzeta(\fg^x\big)\}\\
&=\big\{y\in \fzeta(\fg^x)_1\mid \rk(\ad_\fg(y))=\rk(\ad_\fg(x))\big\}\;,
\end{aligned}
\end{equation}
which is a Zariski open subset of $\fzeta(\fg^x)_1$, hence irreducible. We note that this is also the set of
all $y\in V$ such that $\fg^y=\fg^x$ and that $x\in (\fzeta(\fg^x)^{reg})_1$, so $(\fzeta(\fg^x)^{reg})_1=(\fzeta(\fg^x)_1)^{reg}$ and we will omit the parentheses in the sequel. 

The proof of the following result is as in \cite[Lemma 39.1.2 \& Proposition 39.1.5]{MR2146652}, once the last claim of Lemma \ref{lem:doublecentrII} is taken into account. See also \cite[Proposition 3.10]{MR3890218}.
 \begin{proposition}
\label{eq:JordanIrr}
Let $x=x_s+x_n\in V$. Then:
\begin{itemize}
	\item[$(i)$] the decomposition in Lemma \ref{lem:doublecentrII} induces a decomposition
$\fzeta(\fg^x)^{reg}_1=\fzeta(\fg^{x_s})^{reg}_1\times\fzeta(\fg^{x_n}\cap [\fg^{x_s},\fg^{x_s}])^{reg, G^{x_s}}_1
$, where "reg, $G^{x_s}$" is the regular part for the action of $G^{x_s}$;
	\item[$(ii)$] the $G_0$-Jordan class of $x$ is the irreducible subset of $V$ given by
$J_{G_0}(x)=G_0\cdot(\fzeta(\fg^{x_s})^{reg}_1+x_n)$.
\end{itemize}  
\end{proposition}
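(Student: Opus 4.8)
The plan is to prove the two assertions—the product decomposition of the regular center and the orbit description of the Jordan class—in sequence, leaning on Lemma \ref{lem:doublecentrII} for the former and on the definition \eqref{eq:JCV} of $G_0$-Jordan equivalence for the latter.

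For the decomposition of $\fzeta(\fg^x)^{reg}_1$, I would start from the direct sum $\fzeta(\fg^x)_1=\fzeta(\fg^{x_s})_1\oplus\fzeta(\fg^{x_n}\cap[\fg^{x_s},\fg^{x_s}])_1$ supplied by the last line of Lemma \ref{lem:doublecentrII}, together with its key feature: an element of $\fzeta(\fg^x)$ decomposes along this sum exactly into its semisimple and nilpotent parts. The regular locus $\fzeta(\fg^x)^{reg}_1$ is, by \eqref{eq:fzetareg}, the set of $y$ with $\fg^y=\fg^x$, equivalently with $\rk(\ad_\fg(y))$ maximal. The task is to show that $y=y_s+y_n$ (with $y_s\in\fzeta(\fg^{x_s})_1$ and $y_n\in\fzeta(\fg^{x_n}\cap[\fg^{x_s},\fg^{x_s}])_1$, these being its Jordan components) lies in the regular locus if and only if each component is regular in its own factor, i.e. $y_s\in\fzeta(\fg^{x_s})^{reg}_1$ and $y_n\in\fzeta(\fg^{x_n}\cap[\fg^{x_s},\fg^{x_s}])^{reg}_1$. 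The cleanest route is to invoke the parallel statement for the group/Lie-algebra case, cited as \cite[Lemma 39.1.2 \& Proposition 39.1.5]{MR2146652}, since that argument is purely about centralizers and the Jordan decomposition and carries over verbatim once one knows (from Lemma \ref{lem:doublecentrII}) that the $\theta$-homogeneity respects the splitting. Concretely, $\fg^y=\fg^{y_s}\cap\fg^{y_n}$ and $\fg^{y_s}=\fg^{x_s}$ precisely when $y_s$ is regular in $\fzeta(\fg^{x_s})_1$, while within the reductive algebra $[\fg^{x_s},\fg^{x_s}]$ the condition on $y_n$ is the analogous regularity statement; combining these recovers $\fg^y=\fg^x$.

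For the orbit description, the inclusion $G_0\cdot(\fzeta(\fg^{x_s})^{reg}_1+x_n)\subseteq J_{G_0}(x)$ is the easier direction: any point of the set on the left has the form $g\cdot(y_s+x_n)$ with $\fg^{y_s}=\fg^{x_s}$, and then $\fg^{(y_s+x_n)_s}=\fg^{y_s}=\fg^{x_s}$ while the nilpotent part is $x_n$ itself, so $y_s+x_n\overset{G_0}{\sim}x$ by \eqref{eq:JCV} (using that $y_s,x_n$ already commute since $y_s\in\fzeta(\fg^{x_s})$ and $x_n\in[\fg^{x_s},\fg^{x_s}]$). Applying $g\in G_0$ preserves the class. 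For the reverse inclusion, take $y=y_s+y_n\overset{G_0}{\sim}x$, so there is $g\in G_0$ with $\fg^{y_s}=g\cdot\fg^{x_s}$ and $y_n=g\cdot x_n$; replacing $y$ by $g^{-1}\cdot y$ we may assume $\fg^{y_s}=\fg^{x_s}$ and $y_n=x_n$. Then $y_s\in\fzeta(\fg^{y_s})^{reg}_1=\fzeta(\fg^{x_s})^{reg}_1$ (the first equality being \eqref{eq:fzetareg} for the semisimple element $y_s$), hence $y=y_s+x_n\in\fzeta(\fg^{x_s})^{reg}_1+x_n$, and the original $y$ lies in the $G_0$-sweep. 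Irreducibility of $J_{G_0}(x)$ then follows because $\fzeta(\fg^{x_s})^{reg}_1$ is irreducible (being Zariski open in the vector space $\fzeta(\fg^{x_s})_1$ by \eqref{eq:fzetareg}), so $\fzeta(\fg^{x_s})^{reg}_1+x_n$ is irreducible, and the image of the irreducible variety $G_0\times(\fzeta(\fg^{x_s})^{reg}_1+x_n)$ under the action map is irreducible.

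The main obstacle I anticipate is the careful bookkeeping in the first part: one must verify that the splitting of $y\in\fzeta(\fg^x)_1$ into the two summands genuinely coincides with its abstract Jordan decomposition (so that ``$y_s$ regular'' and ``$y_n$ regular'' are the right conditions), and that regularity of $y$ in $\fzeta(\fg^x)_1$ decouples across the two factors without cross-terms. This is exactly where Lemma \ref{lem:doublecentrII} and the homogeneity of all the relevant centers must be used in full; the rest is a transcription of the ungraded argument. I would be careful to note that $\fzeta(\fg^{x_n}\cap[\fg^{x_s},\fg^{x_s}])_1\subset\mathcal N_V$, so that the second factor contributes only nilpotent directions and does not interfere with the semisimplicity characterization of the first factor, which is what makes the product decomposition of the regular locus clean.
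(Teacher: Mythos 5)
Your proposal is correct and takes essentially the same route as the paper: the paper's entire proof is the remark that the ungraded argument of \cite[Lemma 39.1.2 \& Proposition 39.1.5]{MR2146652} carries over once the last claim of Lemma \ref{lem:doublecentrII} is taken into account, and your write-up is exactly that transcription, with the orbit description and irreducibility worked out directly from \eqref{eq:JCV} and \eqref{eq:fzetareg} just as intended. The one point to keep in mind is that the condition your decoupling produces on the nilpotent component is $\fm^{y_n}=\fm^{x_n}$ with $\fm=[\fg^{x_s},\fg^{x_s}]$, i.e.\ regularity computed inside $\fm$ rather than literally the $\dim\fg^{y_n}$-minimality of Definition \ref{def:regularity}; these two readings of the superscript ``reg'' on the second factor do agree, but the identification is left tacit both in your sketch and in the paper's citation, so this is an ambiguity inherited from the statement rather than a gap in your argument.
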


We will need the following results from \cite{MR3890218} which readily generalize to the reductive case by virtue of \eqref{eq:reduction}.
\begin{proposition}(\cite[Proposition 3.9 and Proposition 3.17]{MR3890218}).
\label{thm:fullcentralizer}
Let $\{\fg,\theta\}$ be a  reductive $\ZZ_m$-graded Lie algebra and $x,y\in V$. Then the following conditions are equivalent:
\begin{itemize}
	\item[$(i)$] $x\overset{G_0}{\sim}y$;
	\item[$(ii)$] there exists $g\in G_0$ such that $\fg^y=g\cdot\fg^x$; 
	\item[$(iii)$] there exists $g\in G_0$ such that $\fzeta(\fg^y)=g\cdot(\fzeta(\fg^x))$.
\end{itemize}
Moreover the number of $G_0$-Jordan classes in $V$ is finite.
\end{proposition}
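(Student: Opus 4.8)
The plan is to prove the equivalences $(i)\Leftrightarrow(ii)\Leftrightarrow(iii)$ and then the finiteness statement, leaning on the characterizations already assembled in the excerpt. The implication $(i)\Rightarrow(ii)$ is immediate from the definition \eqref{eq:JCV}, since $G_0$-Jordan equivalence requires $\fg^{y_s}=g\cdot\fg^{x_s}$, and one checks $\fg^x=\fg^{x_s}\cap\fg^{x_n}$ forces the full centralizers to be conjugate once the nilpotent parts are matched; alternatively one invokes Proposition \ref{eq:JordanIrr}, which presents $J_{G_0}(x)=G_0\cdot(\fzeta(\fg^{x_s})^{reg}_1+x_n)$, so any $y\overset{G_0}{\sim}x$ has $\fg^y$ $G_0$-conjugate to $\fg^x$. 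The chain $(ii)\Leftrightarrow(iii)$ is exactly Corollary \ref{cor:preliminaryfacts}, which establishes the equivalence of $g\cdot\fg^x=\fg^y$, $g\cdot\fzeta(\fg^x)=\fzeta(\fg^y)$ and $g\cdot\fzeta(\fg^x)_1=\fzeta(\fg^y)_1$ for $x,y\in V$, $g\in G_0$.

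The substantive direction is $(ii)\Rightarrow(i)$: assuming only that the full centralizers $\fg^x$ and $\fg^y$ are $G_0$-conjugate, one must recover the finer data of \eqref{eq:JCV}, namely conjugacy of the semisimple centralizers together with conjugacy of the nilpotent parts. First I would reduce to the case $\fg^x=\fg^y$ by replacing $y$ with $g^{-1}\cdot y$. Then, applying the homogeneous decomposition of Lemma \ref{lem:doublecentrII}, the equality $\fg^x=\fg^y$ yields $\fzeta(\fg^x)=\fzeta(\fg^y)$, and comparing semisimple and nilpotent parts under that decomposition gives $\fzeta(\fg^{x_s})=\fzeta(\fg^{y_s})$ and hence $\fg^{x_s}=\fg^{y_s}$ by Lemma \ref{lem:doublecentr}. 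The remaining point is that $x_n$ and $y_n$ are $G_0$-conjugate: both lie in the degree $1$ part of the common semisimple subalgebra $[\fg^{x_s},\fg^{x_s}]=[\fg^{y_s},\fg^{y_s}]$, and one argues via the $S_0^\circ$-reduction \eqref{eq:reduction} together with the corresponding statement in \cite[Proposition 3.9]{MR3890218} applied to this semisimple graded subalgebra. I expect this matching of nilpotent parts to be the main obstacle, since it is precisely here that the graded structure and the reduction to the derived-subalgebra situation must be used carefully rather than simply quoted.

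For finiteness, the strategy is to stratify $V$ according to the conjugacy class of the centralizer $\fg^x$, which by $(ii)$ is a complete invariant of the $G_0$-Jordan class. By Proposition \ref{eq:JordanIrr} a class is determined by the pair consisting of the $G_0$-conjugacy class of $\fg^{x_s}$ (equivalently, of the homogeneous Levi subalgebra $\fc_\fg(R)$ arising as such a centralizer) and the $G_0$-orbit of the nilpotent part $x_n$ inside the degree $1$ part of its derived subalgebra. There are finitely many $G_0$-conjugacy classes of homogeneous Levi subalgebras of $\fg$, and for each such Levi the number of nilpotent orbits in the corresponding $V$-component is finite by Vinberg's theorem (the finiteness of $G_0$-orbits in $\mathcal N_V$, as recalled in \S\ref{subsec:jd}). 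Hence the total count is finite. Since this last assertion is already established in \cite[Proposition 3.17]{MR3890218} for the semisimple case, I would simply transport it to the reductive setting through \eqref{eq:reduction}, observing that the factor $\fzeta(\fg)_1$ contributes no additional classes.
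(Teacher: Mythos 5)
Your proposal is, at bottom, the same proof as the paper's: the paper offers no independent argument for this proposition, but simply observes in the preceding sentence that Popov's Propositions 3.9 and 3.17 ``readily generalize to the reductive case in virtue of \eqref{eq:reduction}'' --- i.e.\ one writes $x=z+x'$, $y=w+y'$ along $V=\fzeta(\fg)_1\oplus\fs_1$, notes that $\fg^{y}=\fzeta(\fg)\oplus\fs^{y'}$, $\fzeta(\fg^y)=\fzeta(\fg)\oplus\fzeta(\fs^{y'})$ and that $G_0$ acts through $S_0^\circ$, and then quotes Popov for the semisimple algebra $\fs$. Your handling of $(i)\Rightarrow(ii)$ and of $(ii)\Leftrightarrow(iii)$ (via Corollary \ref{cor:preliminaryfacts}) is correct, and the step you flag as ``the main obstacle'' in $(ii)\Rightarrow(i)$ does go through along the lines you sketch: after reducing to $\fg^x=\fg^y$ and deducing $\fg^{x_s}=\fg^{y_s}=:\mathfrak l$ from Lemmas \ref{lem:doublecentrII} and \ref{lem:doublecentr}, the nilpotent parts $x_n,y_n\in[\mathfrak l,\mathfrak l]_1$ have equal centralizers in $[\mathfrak l,\mathfrak l]$, and Popov's implication $(ii)\Rightarrow(i)$ applied to the semisimple graded algebra $[\mathfrak l,\mathfrak l]$ (where the Jordan class of a nilpotent element is just its orbit) produces an element of the theta-group of $[\mathfrak l,\mathfrak l]$ conjugating $x_n$ to $y_n$; since the Lie algebra of that group lies in $\fg^{x_s}$, this element fixes $x_s$, hence maps $\mathfrak l$ to itself, so the single element required by \eqref{eq:JCV} exists. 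This is a different reduction from the paper's (through the Levi $\fg^{x_s}$ rather than through $\fs$), but both defer the crux to the same citation, so the difference is organisational rather than substantive.

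The one genuine imprecision is in your finiteness argument: you assert that a class ``is determined by'' the pair consisting of the $G_0$-conjugacy class of $\fg^{x_s}$ and the \emph{$G_0$-orbit} of $x_n$. Well-definedness of this assignment is clear, but injectivity is not: Jordan equivalence demands one element $g$ realizing simultaneously $g\cdot\fg^{x_s}=\fg^{y_s}$ and $g\cdot x_n=y_n$, so after normalizing $\fg^{x_s}=\fg^{y_s}=\mathfrak l$ you need $x_n$ and $y_n$ conjugate under $N_{G_0}(\mathfrak l)$, which is not implied by mere $G_0$-conjugacy (the intersection of a $G_0$-orbit with $\mathfrak l_1$ may a priori split into several $N_{G_0}(\mathfrak l)$-orbits). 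Without injectivity, finiteness of the set of pairs does not bound the number of classes. This is precisely why the paper's own parametrization (Proposition \ref{prop:bijective-Jordan-classes}) uses nilpotent orbits for the Levi's theta-group $M(\widetilde\Sigma)_0$ up to the $W_{Vin}$-action, and why Example \ref{exampleE8-Jordan-classes} exhibits explicit normalizer elements when gluing classes. The repair is easy and uses only your own ingredients: take the pair (conjugacy class of $\mathfrak l$, orbit of $x_n$ under the theta-group of $\mathfrak l$); this map \emph{is} injective, there are finitely many admissible Levis up to conjugacy (Theorem \ref{thm:472} together with the description \eqref{eq:centralizerss} by subsets of restricted roots), and finitely many nilpotent orbits for each Levi by Vinberg's theorem. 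Alternatively, your closing fallback --- quoting Popov's Proposition 3.17 and transporting it via \eqref{eq:reduction} --- is literally the paper's proof, so the conclusion stands in any case.
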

%
%
\begin{corollary}
\label{cor:JClc}
The $G_0$-Jordan class of $x\in V$ coincides also with
$
J_{G_0}(x)=G_0\cdot\fzeta(\fg^{x})^{reg}_1,
$
it is locally closed in $V$ (hence a subvariety of $V$) and $\dim J_{G_0}(x)=\dim \fg_0-\dim \fg_0^{x}+\dim\fzeta(\fg^{x_s})_1$.
\end{corollary}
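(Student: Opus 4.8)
The plan is to prove the three assertions in turn, using Proposition~\ref{thm:fullcentralizer} for the first two and the description of $J_{G_0}(x)$ from Proposition~\ref{eq:JordanIrr} for the dimension count. For the identity $J_{G_0}(x)=G_0\cdot\fzeta(\fg^{x})^{reg}_1$, I would invoke \eqref{eq:fzetareg}, which gives $\fzeta(\fg^{x})^{reg}_1=\{y\in V\mid \fg^y=\fg^x\}$ and shows that $x$ lies in this set. Then every $y\in\fzeta(\fg^{x})^{reg}_1$ satisfies $\fg^y=\fg^x$, so $y\overset{G_0}{\sim}x$ by Proposition~\ref{thm:fullcentralizer}, and $G_0$-stability of the class yields $G_0\cdot\fzeta(\fg^{x})^{reg}_1\subseteq J_{G_0}(x)$. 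Conversely, if $y\in J_{G_0}(x)$ then $\fg^y=g\cdot\fg^x$ for some $g\in G_0$ by the same proposition, whence $\fg^{g^{-1}\cdot y}=\fg^x$ and $y\in G_0\cdot\fzeta(\fg^{x})^{reg}_1$.

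For local closedness I would argue through the Grassmannian. Set $d=\dim\fg^x$ and let $V_{=d}=\{y\in V\mid\dim\fg^y=d\}$; since $\dim\fg^y=\dim\fg-\rk(\ad y)$, the locus $\{\dim\fg^y\le d\}$ is open and $\{\dim\fg^y\ge d\}$ is closed, so $V_{=d}$ is locally closed in $V$. On $V_{=d}$ the centralizer has constant dimension, so $\psi\colon V_{=d}\to\mathrm{Gr}_d(\fg)$, $y\mapsto\fg^y=\ker(\ad_\fg y)$, is a morphism. The image $G_0\cdot\fg^x$ of $\fg^x$ under the induced $G_0$-action on $\mathrm{Gr}_d(\fg)$ is a single orbit, hence locally closed, and Proposition~\ref{thm:fullcentralizer} identifies $\psi^{-1}(G_0\cdot\fg^x)$ with $J_{G_0}(x)$. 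Thus $J_{G_0}(x)$ is locally closed, and as such it is a subvariety of $V$.

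For the dimension I would use $J_{G_0}(x)=G_0\cdot Y'$ with $Y'=\fzeta(\fg^{x_s})^{reg}_1+x_n$ (Proposition~\ref{eq:JordanIrr}), an open subset of the affine space $x_n+\fzeta(\fg^{x_s})_1$, so that $T_yY'=\fzeta(\fg^{x_s})_1$ for every $y\in Y'$ and $x\in Y'$. Consider the action map $a\colon G_0\times Y'\to V$, whose generic differential-rank equals $\dim J_{G_0}(x)$ over $\CC$. By $G_0$-equivariance the rank of $da_{(g,y)}$ depends only on $y$, and the image of $da_{(e,y)}$ is $[\fg_0,y]+\fzeta(\fg^{x_s})_1$. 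Since every orbit in $J_{G_0}(x)$ has the same dimension $\dim\fg_0-\dim\fg_0^{x}$ by Remark~\ref{rem:definition-Jordan-classes}, we have $\dim[\fg_0,y]=\dim\fg_0-\dim\fg_0^{x}$ for all $y\in Y'$, giving $\rk(da_{(e,y)})\le(\dim\fg_0-\dim\fg_0^{x})+\dim\fzeta(\fg^{x_s})_1$. It remains to show that at $y=x$ this sum is \emph{direct}, i.e.\ $[\fg_0,x]\cap\fzeta(\fg^{x_s})_1=0$; then the maximal rank is attained at $x$ and equals the claimed value.

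The main obstacle is precisely this transversality. The argument I have in mind is as follows: since $x_s$ centralizes $\fg^{x_s}$ and $x_n\in[\fg^{x_s},\fg^{x_s}]\subseteq\fg^{x_s}$, any $v\in\fzeta(\fg^{x_s})$ commutes with both $x_s$ and $x_n$, and as $\fg^x=\fg^{x_s}\cap\fg^{x_n}$ one deduces $\fzeta(\fg^{x_s})\subseteq\fzeta(\fg^{x})$. Lemma~\ref{lem:doublecentr} then gives $[\fg,x]\perp\fg^{x}\supseteq\fzeta(\fg^{x_s})$, so a vector $v$ in $[\fg_0,x]\cap\fzeta(\fg^{x_s})_1$ is $\k$-orthogonal to $\fzeta(\fg^{x_s})$ while lying in it; nondegeneracy of $\k$ on $\fzeta(\fg^{x_s})$, valid because $\fg^{x_s}$ is reductive, forces $v=0$. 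I note that building the slice $Y'$ from $\fzeta(\fg^{x_s})$ rather than from $\fzeta(\fg^{x})_1$ is essential here, since $\k$ is nondegenerate on the former but need not be on the latter when $x_n\neq0$.
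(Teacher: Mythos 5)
Your proof is correct, and it is considerably more self-contained than the paper's, which handles this corollary entirely by citation: the identity and local closedness are deferred to \cite[Corollary 39.1.7]{MR2146652}, and the dimension formula to \cite[Proposition 3.13]{MR3890218}. Your first two parts essentially reconstruct the cited ungraded mechanism in the graded setting: the identity follows from \eqref{eq:fzetareg} together with Proposition \ref{thm:fullcentralizer}, and local closedness from realizing $J_{G_0}(x)$ as the preimage of the (locally closed) $G_0$-orbit of $\fg^x$ in $\mathrm{Gr}_d(\fg)$ under the centralizer morphism on the constant-dimension locus $V_{=d}$. The dimension count is where you genuinely diverge: in place of a fiber-dimension argument for $G_0\times(\fzeta(\fg^{x_s})^{reg}_1+x_n)\to J_{G_0}(x)$, which requires each $G_0$-orbit to meet the slice in a finite set (via normalizers and Theorem \ref{theorem:weylvinberg}), you bound the rank of the differential of the action map uniformly over the slice and show the bound is attained at $x$, reducing everything to the transversality $[\fg_0,x]\cap\fzeta(\fg^{x_s})_1=0$. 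This buys an argument using only facts already established in the paper, at the cost of invoking characteristic-zero generic smoothness, which is harmless over $\CC$.

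One small repair in the transversality step: nondegeneracy of $\k$ on $\fzeta(\fg^{x_s})$ does \emph{not} follow from abstract reductivity of $\fg^{x_s}$ --- an invariant nondegenerate form on $\fg$ can vanish identically on a reductive (even semisimple) subalgebra, e.g.\ on the diagonal $\fsl_2\subset\fsl_2\oplus\fsl_2$ for a suitable adapted form. The correct justification is the one recorded in \S\ref{subsec:gla}: since $x_s$ is semisimple, $\fg=\fg^{x_s}\oplus[\fg,x_s]$ is a $\k$-orthogonal decomposition by Lemma \ref{lem:doublecentr}, so the restriction of $\k$ to $\fg^{x_s}=\fzeta(\fg^{x_s})\oplus[\fg^{x_s},\fg^{x_s}]$ is adapted, in particular nondegenerate on $\fzeta(\fg^{x_s})$. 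With that substitution, your closing observation --- that the slice must be modeled on $\fzeta(\fg^{x_s})_1$ rather than on $\fzeta(\fg^{x})_1$, where $\k$ can indeed degenerate when $x_n\neq 0$ --- is exactly the right point, and the proof is complete.
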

\begin{proof}
The first two statements
can be proved as in \cite[Corollary 39.1.7]{MR2146652}, for the last one see \cite[Proposition 3.13]{MR3890218}. 
\end{proof}

It follows from Corollary \ref{cor:JClc} that any $G_0$-Jordan class $J_{G_0}(x)=G_0\cdot\fzeta(\fg^{x})^{reg}_1$ is contained in the $G$-Jordan class $J_{G}(x)=G\cdot \fzeta(\fg^{x})^{reg}$. However it is well-known that two elements $x,y\in V$ in the same $G$-Jordan class
are not $G_0$-Jordan equivalent in general (see \cite[38.7.18]{MR2146652} for an example of nilpotent elements in $V$ that are $G$-conjugate but not $G_0$-conjugate).
We conclude this subsection recalling the relationship between the sheets for the $G_0$-action on $V$ and the $G_0$-Jordan classes. 

Let  $H$ be a connected algebraic group acting on a variety $X$ and let $d\in\NN$. We set $X_{(d)}=\{x\in X~|~ \dim \O_x^H
=d\}$ and for any subset $A\subset X$ we set $A_{(d)}={A}\cap X_{(d)}$. Each $X_{(d)}$ is locally closed and its irreducible components are called {\it sheets} for the $H$-action on $X$.
We observe that $X_{(\leq d)}:=\bigcup_{j\leq d}X_{(j)}$ is closed so $\overline{X_{(d)}}\subset X_{(\leq d)}$ \cite[Proposition 21.4.4]{MR2146652}.

 If $A\subset V$, and $p$ is the largest integer with $A_{(p)}\neq\emptyset$ then, according to Definition \ref{def:regularity}, we have
$A_{(p)}=A^\bullet$,
which is a Zariski open subset of $A$. In particular, the set $V^{\bullet}$ is a Zariski open subset of $V$, hence it is irreducible, and it is called the $G_0$-regular sheet of $V$.
\begin{proposition}\label{prop:dense}(\cite[Proposition 3.19]{MR3890218}) For any sheet $S$ in $V$ there exists a unique $G_0$-Jordan class $J\subset S$ such that $S=\overline{J}^\bullet$. Moreover we have $\overline{S}=\overline{J}$. 
\end{proposition}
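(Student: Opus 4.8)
The strategy is the $\theta$-analogue of the Borho--Kraft description of sheets as regular closures of Jordan classes, carried out purely at the topological level. First I record that the Jordan classes sit compatibly inside the sheet stratification. By Remark \ref{rem:definition-Jordan-classes}(1) every $G_0$-Jordan class consists of $G_0$-orbits of a single dimension, say $d$, so each class is contained in exactly one layer $V_{(d)}$. Since the Jordan classes are finite in number (Proposition \ref{thm:fullcentralizer}), partition $V$, and are irreducible and locally closed (Corollary \ref{cor:JClc}), each layer is a finite disjoint union $V_{(d)}=\bigsqcup_i J_i$ of irreducible locally closed subsets of $V$.

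For existence, fix a sheet $S$, that is, an irreducible component of some $V_{(d)}$. From $S\subset\bigcup_i J_i\subset\bigcup_i\overline{J_i}$ and the irreducibility of $S$, the finite relatively closed cover $\{S\cap\overline{J_i}\}_i$ forces $S\subset\overline{J}$ for a single class $J:=J_i\subset V_{(d)}$. Then $\overline{J}\cap V_{(d)}$ is the closure of $J$ in $V_{(d)}$, hence irreducible, and it contains $S$; by maximality of the component $S$ we get $S=\overline{J}\cap V_{(d)}$, and in particular $J\subset S$. To identify this with a regular part, note $\overline{J}\subset\overline{V_{(d)}}\subset V_{(\leq d)}$ while $J\subset\overline{J}$ has orbit dimension $d$; thus $d$ is the largest integer with $(\overline{J})_{(d)}\neq\emptyset$, and the equality $A_{(p)}=A^\bullet$ (applied to $A=\overline{J}$ and $p=d$) yields $\overline{J}^{\bullet}=\overline{J}\cap V_{(d)}=S$. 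Finally $J\subset S\subset\overline{J}$ gives $\overline{S}=\overline{J}$, which is the asserted equality of closures.

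For uniqueness, suppose $J'$ is a Jordan class with $J'\subset S$ and $S=\overline{J'}^{\bullet}$. As $J'\subset S\subset V_{(d)}$, the class $J'$ again has orbit dimension $d$ and $\overline{J'}\subset V_{(\leq d)}$, so the same application of $A_{(p)}=A^\bullet$ gives $\overline{J'}^{\bullet}=\overline{J'}\cap V_{(d)}=S$; hence $\overline{J'}\supset S$ forces $\overline{J'}\supset\overline{S}=\overline{J}$, while $J'\subset S\subset\overline{J}$ gives the reverse inclusion, so $\overline{J'}=\overline{J}$. Since Jordan classes are locally closed, $J$ and $J'$ are both dense and open in the common irreducible set $\overline{J}$, so they meet; as the Jordan classes partition $V$, this forces $J=J'$.

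The only delicate points are bookkeeping ones: keeping track of closures taken in $V$ versus in $V_{(d)}$, and verifying that the maximal orbit dimension on $\overline{J}$ is exactly $d$ so that the identity $A_{(p)}=A^\bullet$ applies with $p=d$; both are handled by the inclusion $\overline{V_{(d)}}\subset V_{(\leq d)}$ together with the constancy of orbit dimension along $J$. I expect no genuine obstacle beyond this, since all the structural input (finiteness, irreducibility, local closedness, and constant orbit dimension along classes) is already available from the preceding results.
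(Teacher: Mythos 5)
Your proof is correct, and it is worth noting that the paper offers no argument of its own for this statement: Proposition \ref{prop:dense} is quoted directly from \cite[Proposition 3.19]{MR3890218}, with only the remark that Popov's results carry over to the reductive case via the splitting \eqref{eq:reduction}. Your argument is the classical Borho--Kraft-type topological one (sheets as $G_0$-regular closures of the Jordan classes dense in them), and every ingredient you invoke is established in the paper before the proposition is stated, so there is no circularity: constant orbit dimension along a class (Remark \ref{rem:definition-Jordan-classes}), finiteness of the set of classes (Proposition \ref{thm:fullcentralizer}), irreducibility and local closedness (Proposition \ref{eq:JordanIrr} and Corollary \ref{cor:JClc}), and the facts $\overline{V_{(d)}}\subset V_{(\leq d)}$ and $A_{(p)}=A^{\bullet}$ recorded just before the statement. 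Your bookkeeping is also sound: the identification $S=\overline{J}\cap V_{(d)}$ via maximality of the component, the verification that the top orbit dimension on $\overline{J}$ is exactly $d$ so that $\overline{J}^{\bullet}=\overline{J}\cap V_{(d)}$, and the uniqueness step, where local closedness makes $J$ and $J'$ nonempty open subsets of the common irreducible set $\overline{J}=\overline{J'}$, forcing them to meet and hence, being members of a partition, to coincide. A side benefit of your route is that it proves the statement directly for reductive $\fg$ from the paper's own preliminaries, with no need to pass through the semisimple case treated in \cite{MR3890218}.
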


\section{Closure of a $G_0$-Jordan class}
\label{sec:closure-Jordan-classes}

\subsection{Closure of $G_0$-Jordan classes: the semisimple parts}
By virtue of Proposition \ref{prop:dense}, it is important to understand the closure and $G_0$-regular closure of a $G_0$-Jordan class and to see which 
classes are dense in a sheet. We start with a preliminary result and then describe which semisimple parts occur in the closure of a $G_0$-Jordan class.

We first describe the image of the closure of a Jordan class through the Steinberg map $\varphi\colon V\to  V/\!/ G_0\cong \fc/W_{Vin}$.  

\begin{lemma}\label{lem:image-restricted-Steinberg}Let $J=J_{G_0}(x)$ be a $G_0$-Jordan class in $V$, with $x_s\in\fc$.  Then,  
$\varphi(\overline{J})=\frac{W_{Vin}\cdot\fzeta(\fg^{x_s})_1}{W_{Vin}}$.
In particular,  it is a closed subset of $\fc/W_{Vin}$, i.e., an affine variety.
\end{lemma}
\begin{proof} Observe that $\fzeta(\fg^{x_s})_1\subset\fc$,  cf. Corollary \ref{cor:serveanchedopo},  so the expression for the image makes sense.  Now,  $\varphi(J)=\varphi(\fzeta(\fg^{x_s})^{reg}_1)$ by Proposition \ref{eq:JordanIrr}, $G_0$-equivariance and \cite[Theorem 3]{MR0430168}, hence
\begin{equation}
\label{eq:usefulI}
\begin{aligned}
\varphi(\overline J)\subset \overline{\varphi(J)}&=
\overline{\varphi(\fzeta(\fg^{x_s})^{reg}_1)}=\frac{\overline{W_{Vin}\cdot\fzeta(\fg^{x_s})^{reg}_1}}{W_{Vin}}
=\frac{W_{Vin}\cdot\fzeta(\fg^{x_s})_1}{W_{Vin}}\;.
\end{aligned}
\end{equation}
On the other hand, if $y_s\in\fzeta(\fg^{x_s})^{reg}_1$, then $y=y_s+x_n\in\Oc_y^{G_0}\subset J$ and so $y_s\in\overline{\Oc_y^{G_0}}\subset\overline{J}$, \cite[Proposition 4]{MR0430168}, giving $\fzeta(\fg^{x_s})^{reg}_1\subset \overline J$. 
It follows that 
\begin{equation}
\label{eq:usefulII}
\fzeta(\fg^{x_s})_1=\overline{\fzeta(\fg^{x_s})^{reg}_1}\subset \overline J\;,
\end{equation}
hence $W_{Vin}\cdot\fzeta(\fg^{x_s})_1\subset \overline J$ and 
$$\frac{W_{Vin}\cdot\fzeta(\fg^{x_s})_1}{W_{Vin}}=\varphi(W_{Vin}\cdot\fzeta(\fg^{x_s})_1)\subset \varphi(\overline J).$$ 
\end{proof}

Let $J=J_{G_0}(x)\subset V_{(d)}$ be a $G_0$-Jordan class in $V$. Then its closure $\overline{J}$ is a union of $G_0$-orbits and if $\Oc_y^{G_0}\subset \overline{J}$, then  $\overline{\Oc_y^{G_0}}\subset \overline{J}$. 
Let $\mathcal{M}_{\overline J}$ be the set of  $G_0$-orbits contained in $\overline{J}$ which are maximal with respect to the partial order given by inclusion of orbit closures.  By construction $\overline{J}=\bigcup_{\Oc\in{\mathcal M}_{\overline J}}\overline{\Oc}$.

The following proposition can be seen as a partial analogue of \cite[\S3.1]{Bo}.
\begin{proposition}
\label{prop:maxorbI}
Let $J=J_{G_0}(x)$ be a $G_0$-Jordan class in $V$. Then 
$\overline{J}^\bullet=\bigcup_{\Oc\in{\mathcal M}_{\overline J}}\Oc$.  
\end{proposition}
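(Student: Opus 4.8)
The plan is to show that both sides coincide with $\overline{J}_{(d)}$, the set of points of $\overline{J}$ lying on $G_0$-orbits of the (maximal) dimension $d$, where $J=J_{G_0}(x)\subseteq V_{(d)}$. First I would record the reduction. Since $J\subseteq\overline{J}$ consists of orbits of dimension $d$ while $\overline{J}\subseteq\overline{V_{(d)}}\subseteq V_{(\leq d)}$, and $V_{(\leq d)}$ is closed by \cite[Proposition 21.4.4]{MR2146652}, the largest orbit dimension occurring on $\overline{J}$ is exactly $d$. Hence the identity ``$A_{(p)}=A^{\bullet}$'' (for $p$ the top orbit dimension on $A$), applied to $A=\overline{J}$, gives $\overline{J}^{\bullet}=\overline{J}_{(d)}$; this is an open, hence dense, $G_0$-stable subset of the irreducible variety $\overline{J}$, and it is a union of $d$-dimensional $G_0$-orbits.

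The inclusion $\overline{J}^{\bullet}\subseteq\bigcup_{\Oc\in\mathcal{M}_{\overline J}}\Oc$ is then immediate: if $\dim\Oc_y^{G_0}=d$ and $\Oc_y^{G_0}\subsetneq\overline{\Oc'}$ for some orbit $\Oc'\subseteq\overline{J}$, then $\dim\Oc'>d$, contradicting that $d$ is maximal. So every $d$-dimensional orbit of $\overline{J}$ is maximal for the closure order, which is the asserted inclusion.

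The substance is the reverse inclusion, namely that every $\Oc\in\mathcal{M}_{\overline J}$ satisfies $\dim\Oc=d$. I would deduce this from the intermediate claim that \emph{each point of $\overline{J}$ lies in the closure of some $d$-dimensional orbit contained in $\overline{J}$}. Indeed, granting the claim, for a maximal $\Oc$ and $v\in\Oc$ one finds $\Oc'$ with $\dim\Oc'=d$ and $v\in\overline{\Oc'}$, so $\Oc\subseteq\overline{\Oc'}$ and maximality forces $\Oc=\Oc'$, whence $\dim\Oc=d$. Equivalently, the claim guarantees that $\overline{\Oc}$ meets the dense open set $\overline{J}^{\bullet}$; since the latter is a union of $d$-dimensional orbits, $\overline{\Oc}$ then contains a $d$-dimensional orbit $\Oc''$, and from $\Oc''\subseteq\overline{\Oc}$ together with $\dim\Oc\leq d$ one gets $\dim\Oc=d$. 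To establish the claim I would fix $v\in\overline{J}$ and, using that $\overline{J}^{\bullet}$ is open and dense in the irreducible $\overline{J}$, select a curve through $v$ whose generic point lies in $\overline{J}^{\bullet}$; saturating by $G_0$ and passing to limits degenerates a family of $d$-dimensional orbits to a $G_0$-stable closed set containing $v$.

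The main obstacle is exactly the passage to the limit: the chosen curve may meet infinitely many distinct $d$-dimensional orbits, and the closure of their union is not the union of their closures, so one cannot directly extract a single $d$-dimensional orbit whose closure contains $v$. I expect to break this obstruction with the Steinberg quotient $\varphi\colon V\to V/\!\!/G_0\cong\fc/W_{Vin}$: any orbit $\Oc'$ with $v\in\overline{\Oc'}$ satisfies $\varphi(\Oc')=\varphi(v)$ and therefore lies in the fiber $\varphi^{-1}(\varphi(v))$, which contains only \emph{finitely many} $G_0$-orbits by Theorem \ref{theorem:weylvinberg} and the properties of $\varphi$. Thus in the limit fiber the relevant orbits are confined to a finite list, and irreducibility of the degenerating family, combined with the dimension count afforded by Corollary \ref{cor:JClc} (which fixes the dimension of $J$ and hence of the family of top orbits), should pin $v$ into the closure of a single orbit of dimension $d$ in $\overline{J}$, completing the claim and thereby the proposition.
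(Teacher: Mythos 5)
Your skeleton is right and matches the paper's: the identification $\overline{J}^\bullet=\overline{J}_{(d)}$, the easy inclusion $\overline{J}^\bullet\subset\bigcup_{\Oc\in{\mathcal M}_{\overline J}}\Oc$, and the reduction of the proposition to the claim that every point of $\overline{J}$ lies in the closure of a single $d$-dimensional orbit contained in $\overline{J}$ are all correct. The genuine gap is the final step, and your own wording (``should pin'') concedes it. The fix you propose does not resolve the obstruction you yourself identified: the $d$-dimensional orbits met by your curve lie in \emph{other} fibers of $\varphi$, so the finiteness of the set of $G_0$-orbits in the single fiber $\varphi^{-1}(\varphi(v))$ imposes no constraint on the degenerating family. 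What finiteness does give (since $\overline{J}$ is closed and $G_0$-stable) is that $\varphi^{-1}(\varphi(v))\cap\overline{J}$ is a finite union of closures of orbits contained in $\overline{J}$, so $v$ certainly lies in the closure of one of \emph{those} orbits; but nothing in your argument rules out that every orbit on this finite list whose closure contains $v$ has dimension strictly less than $d$. That is exactly the point at issue, and ``irreducibility of the degenerating family'' together with ``the dimension count afforded by Corollary \ref{cor:JClc}'' is not an argument for it.

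The missing idea is a dimension statement about the fibers of $\psi=\varphi|_{\overline{J}}$, and supplying it is the bulk of the paper's proof. The paper first proves $\varphi(\overline{J})=\big(W_{Vin}\cdot\fzeta(\fg^{x_s})_1\big)/W_{Vin}$, a closed irreducible affine subvariety of $\fc/W_{Vin}$, whose dimension is $\dim\fzeta(\fg^{x_s})_1=\dim\overline{J}-d$ by Corollary \ref{cor:JClc} (this is where that corollary genuinely enters). Since $\psi$ is then a dominant morphism of irreducible affine varieties, all fibers of $\psi$ have dimension exactly $d$: the paper argues as in \cite[Corollary 2]{MR0430168}, using the cone structure; alternatively, every irreducible component of every fiber of a dominant morphism has dimension at least $\dim\overline{J}-\dim\varphi(\overline{J})=d$. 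Combining this with \cite[Theorem 4]{MR0430168}, i.e., with the fact that each irreducible component of a fiber of $\psi$ is an orbit closure $\overline{\Oc'}$ with $\Oc'\subset\overline{J}$, hence $\dim\Oc'\leq d$, forces every such component to be the closure of a $d$-dimensional orbit contained in $\overline{J}$. Applied to the component through a point of a maximal orbit $\Oc$, this gives $\overline{\Oc}=\overline{\Oc'}$ with $\dim\Oc'=d$, hence $\dim\Oc=d$, which is the hard inclusion. Note that once this fiber-dimension argument is available, your curve is superfluous: the argument applies directly to $\overline{J}$ and to the point $v$.
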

\begin{proof} 
We may assume without loss of generality that $x=x_s+x_n$ with $x_s\in\fc$.
First of all $\overline{J}\subset \overline{V_{(d)}}\subset V_{(\leq d)}$, so $\dim\Oc\leq d$ for any $\Oc\in{\mathcal M}_{\overline J}$. 
We then consider the restriction $
\psi=\varphi|_{\overline J}:\overline J\longrightarrow \varphi(\overline J)
$ to $\overline{J}$
of the Steinberg map $\varphi$. 

Let $z\in \Oc$ for $\Oc\in {\mathcal M}_{\overline J}$. By \cite[Theorem 4]{MR0430168} the irreducible component of the fiber $\psi^{-1}\psi(z)$ containing $z$ is the closure of a $G_0$-orbit in $\overline{J}$, i.e., it is $\overline{\Oc}$. Since $\psi$ is a dominant morphism of irreducible affine varieties, we may argue as in \cite[Corollary 2]{MR0430168} and the fibers of $\psi$ are all of the same dimension, which is the maximum dimension of an orbit in $\overline{J}$, namely $d$. Hence $\dim\Oc=d$, $\Oc\subset \overline{J}^\bullet$ and $$\bigcup_{\Oc\in{\mathcal M}_{\overline J}}\Oc\subset\overline{J}^\bullet\;.$$
The other inclusion follows because $\overline{\Oc}\backslash\Oc$ is always a union of $G_0$-orbits of dimension $<d$.
\end{proof}
\begin{lemma}\label{lem:ss-closure}
Let $J=J_{G_0}(x)$ be a $G_0$-Jordan class and $y=y_s+y_n\in\overline J$. Then:
\begin{itemize}
	\item[$(i)$] $y_s\in \overline J$;
	\item[$(ii)$] $y_s$ is $G_0$-conjugate to an element of $\fzeta(\fg^{x_s})_1$;
	\item[$(iii)$] For any $y'_s\in \fzeta(\fg^{x_s})_1$ there exists a $y'_n\in  \fg^{y'_s}\cap\mathcal{N}_V$ such that $y'_s+y'_n\in\overline{J}^\bullet$.
\item[$(iv)$]  If $z\in\fzeta(\fg)_1$, then $z+y\in \overline{J}$, and in that case $z+y\in\overline{J}^\bullet$ if and only if $y\in\overline{J}^\bullet$.
\end{itemize} 
\end{lemma}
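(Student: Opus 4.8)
The plan is to prove the four statements essentially in the order given, leveraging Proposition \ref{prop:maxorbI} throughout, since it identifies $\overline{J}^\bullet$ with the union of the maximal-dimensional orbits in $\overline{J}$ and pins down the image $\varphi(\overline J)$ under the Steinberg map. I may assume without loss of generality that $x=x_s+x_n$ with $x_s\in\fc$.

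\emph{Proof of $(i)$ and $(ii)$.} For $(i)$, let $y=y_s+y_n\in\overline J$. The Steinberg map $\varphi$ does not separate $y$ from its semisimple part $y_s$ (two elements with the same semisimple $G_0$-orbit lie in the same fiber, by the discussion following Theorem \ref{theorem:weylvinberg}), so $\varphi(y_s)=\varphi(y)\in\varphi(\overline J)$. Since $y_s$ is semisimple, its orbit $\O^{G_0}_{y_s}$ is the unique closed orbit in the fiber $\varphi^{-1}\varphi(y)$, and by \cite[Proposition 4]{MR0430168} the closure of $\O^{G_0}_y$ contains $y_s$; as $\overline{\O^{G_0}_y}\subset\overline J$ we conclude $y_s\in\overline J$, giving $(i)$. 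For $(ii)$, Theorem \ref{thm:472} says $\O^{G_0}_{y_s}$ meets $\fc$, so $y_s$ is $G_0$-conjugate to some element of $\fc$; combining with $(i)$ and the identification \eqref{eq:image-restricted-Steinberg} of $\varphi(\overline J)$ as $(W_{Vin}\cdot\fzeta(\fg^{x_s})_1)/W_{Vin}$, the semisimple orbit of $y_s$ meets $W_{Vin}\cdot\fzeta(\fg^{x_s})_1$, hence (after applying a suitable $G_0$-element realizing the $W_{Vin}$-action via $N_{G_0}(\fc)$) $y_s$ is $G_0$-conjugate to an element of $\fzeta(\fg^{x_s})_1$ itself.

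\emph{Proof of $(iii)$.} Fix $y'_s\in\fzeta(\fg^{x_s})_1$. By \eqref{eq:usefulII} we already know $\fzeta(\fg^{x_s})_1\subset\overline J$, so $y'_s\in\overline J$. The point is to produce a \emph{nilpotent completion} landing in the open $G_0$-regular part $\overline J^\bullet$. Here I would use that $\overline J^\bullet=\bigcup_{\O\in\mathcal M_{\overline J}}\O$ consists of orbits of the top dimension $d$, together with the fiber-dimension argument from Proposition \ref{prop:maxorbI}: the fiber $\psi^{-1}\psi(y'_s)$ of $\psi=\varphi|_{\overline J}$ over the image point $\varphi(y'_s)$ has dimension $d$, and its irreducible component through any of its points is the closure of a $d$-dimensional orbit $\O\in\mathcal M_{\overline J}$. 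Pick such an orbit $\O$ in the fiber over $\varphi(y'_s)$ and let $w\in\O\subset\overline J^\bullet$. Then $\varphi(w)=\varphi(y'_s)$ forces $w_s$ to be $G_0$-conjugate to $y'_s$, and after translating by the conjugating element I may assume $w_s=y'_s$; writing $w=y'_s+y'_n$ with $y'_n=w_n\in\fg^{y'_s}\cap\mathcal N_V$ gives the desired element of $\overline J^\bullet$.

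\emph{Proof of $(iv)$.} For $z\in\fzeta(\fg)_1$, Remark \ref{rem:definition-Jordan-classes}\eqref{item:centro} shows the additive group $\fzeta(\fg)_1$ acts on $J$ by translations, so translation by $z$ is a homeomorphism of $V$ preserving $J$, hence preserving $\overline J$; thus $z+y\in\overline J$ whenever $y\in\overline J$. Because $\fzeta(\fg)_1$ acts trivially in the sense that $\fg^{z+y}=\fg^{y}$ (the center adds nothing to centralizer dimensions, since $\fg^{z}=\fg$), translation by $z$ preserves orbit dimensions, so it maps the top-dimensional locus $\overline J^\bullet$ onto itself; therefore $z+y\in\overline J^\bullet$ if and only if $y\in\overline J^\bullet$, establishing $(iv)$.

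\emph{Main obstacle.} I expect the delicate point to be part $(iii)$: making the fiber-dimension argument of Proposition \ref{prop:maxorbI} yield a nilpotent completion with semisimple part \emph{exactly} a prescribed $y'_s$ (rather than merely conjugate to it) requires care in tracking the $G_0$- and $W_{Vin}$-conjugations, and in checking that the irreducible component of the fiber through the chosen point is genuinely the closure of a top-dimensional orbit. The remaining parts are comparatively formal consequences of Proposition \ref{prop:maxorbI}, Theorem \ref{thm:472}, and the translation action of $\fzeta(\fg)_1$.
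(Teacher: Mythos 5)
Your proof is correct, and on parts (i), (ii) and (iv) it is in substance the paper's own argument: (i) is exactly $y_s\in\overline{\Oc_y^{G_0}}\subset\overline J$ via \cite[Proposition 4]{MR0430168} (your Steinberg-fiber preamble there is redundant but harmless); (iv) is the translation action of $\fzeta(\fg)_1$ from Remark \ref{rem:definition-Jordan-classes}\eqref{item:centro} together with $\fg^{z+y}=\fg^{y}$; and your (ii) combines Theorem \ref{thm:472} with the image formula \eqref{eq:image-restricted-Steinberg} and lifts of $W_{Vin}$ to $N_{G_0}(\fc)$, which are precisely the ingredients the paper repackages as the identity $\overline J\cap\fc=W_{Vin}\cdot\fzeta(\fg^{x_s})_1$ in \eqref{eq:JcapC}. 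Part (iii) is where you take a genuinely different route. The paper anchors at the point $y_s'+x_n$: by Proposition \ref{eq:JordanIrr} one has $\fzeta(\fg^{x_s})^{reg}_1+x_n\subset J$, hence $y_s'+x_n\in\overline J$; it then chooses $\Oc\in{\mathcal M}_{\overline J}$ with $\Oc^{G_0}_{y_s'+x_n}\subset\overline{\Oc}$, and closedness of the Steinberg fibers together with \cite[Theorem 3]{MR0430168} forces $\Oc$ to have a representative of the form $y_s'+y_n'$, with $\Oc\subset\overline J^{\bullet}$ by Proposition \ref{prop:maxorbI}. You instead anchor at $y_s'\in\overline J$ (from \eqref{eq:usefulII}) and harvest a top-dimensional orbit directly from the fiber $\psi^{-1}\psi(y_s')$, using the equidimensionality of the fibers of $\psi$ established inside the proof of Proposition \ref{prop:maxorbI}, and then conjugate its semisimple part onto $y_s'$; this is valid, never uses $x_n$, and makes the mechanism (a top-dimensional orbit must live over every point of $\varphi(\overline J)$) more transparent. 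One caution: your claim that the component of the fiber through \emph{any} of its points is the closure of a $d$-dimensional orbit of ${\mathcal M}_{\overline J}$ is stronger than what the proof of Proposition \ref{prop:maxorbI} records, which concerns only the total dimension of each fiber; the stronger claim is true, but needs the general lower bound $\dim C\geq \dim\overline J-\dim\varphi(\overline J)$ for every component $C$ of a fiber of a dominant morphism of irreducible varieties. You do not actually need it: since the fiber has dimension $d$ and consists of finitely many $G_0$-orbits \cite{MR0430168}, each of dimension at most $d$, it contains some orbit $\Oc$ of dimension exactly $d$, and then $\Oc\subset\overline J^{\bullet}$ by Definition \ref{def:regularity}; your "pick such an orbit" and the final conjugation step then go through verbatim.
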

\begin{proof}
Since $\overline{J}$ is $G_0$stable,  claim (i) follows from \cite[Proposition 4]{MR0430168} because $y_s\in\overline{\Oc_y^{G_0}}\subset\overline{J}$.
We now turn to (ii). We may assume $y_s\in\fc$ by Theorem \ref{thm:472}. Claim  (ii) is then an immediate consequence of the following identity
\begin{equation}
\label{eq:JcapC}
\overline J\cap\fc=W_{Vin}\cdot\fzeta(\fg^{x_s})_1\;,
\end{equation}
which we now establish. 

First of all $W_{Vin}\cdot\fzeta(\fg^{x_s})_1\subset \overline J$ by \eqref{eq:usefulII} and $W_{Vin}\cdot\fzeta(\fg^{x_s})_1\subset\fc$ by Corollary \ref{cor:serveanchedopo},
so one inclusion is clear. 
Conversely 
$\varphi(\overline J\cap\fc)\subset\varphi(\overline J)=\varphi(W_{Vin}\cdot\fzeta(\fg^{x_s})_1)$,
by Lemma \ref{lem:image-restricted-Steinberg}.  It follows that
$\overline J\cap\fc\subset W_{Vin}\cdot\fzeta(\fg^{x_s})_1$, since the restriction of $\varphi$ to $\fc$ is just the natural projection to $\fc/W_{Vin}$ and both sets are $W_{Vin}$-stable. 

We prove (iii). By Proposition \ref{eq:JordanIrr} we have that $\fzeta(\fg^{x_s})_1^{reg}+x_n\subset J$, so $\fzeta(\fg^{x_s})_1+x_n\subset\overline{J}$ and $y'_s+x_n\in\overline{J}$. Therefore the orbit $\Oc_{y'_s+x_n}^{G_0}$ is contained in the closure of an orbit $\Oc$ in ${\mathcal M}_{\overline J}$. Since the fibers of the Steinberg map are closed and \cite[Theorem 3]{MR0430168} is in force, $\Oc$ is represented by an element of the form $y'_s+y'_n$ for some $y'_n\in\fg^{y'_s}\cap {\mathcal N}_V$. Clearly $\Oc\subset \overline{J}^\bullet$ by Proposition \ref{prop:maxorbI}.

Finally, (iv) follows from the action of $\fzeta(\fg)_1$ on $J$, cf. Remark \ref{rem:definition-Jordan-classes} \eqref{item:centro}.
\end{proof}
\begin{corollary}
The $G_0$-regular closure $\overline J^\bullet$ of a $G_0$-Jordan class $J$ contains at least a nilpotent $G_0$-orbit.
\end{corollary}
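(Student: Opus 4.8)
The plan is to deduce this corollary directly from the preceding Lemma~\ref{lem:ss-closure}, specifically from part~(iii), by making the simplest possible choice of semisimple element in $\fzeta(\fg^{x_s})_1$. First I would observe that the zero vector $0$ lies in $\fzeta(\fg^{x_s})_1$, since this is a linear subspace of $\fc$. Applying Lemma~\ref{lem:ss-closure}(iii) with $y'_s=0$, I obtain an element $y'_n\in\fg^{0}\cap\mathcal{N}_V=\mathcal{N}_V$ such that $0+y'_n=y'_n\in\overline J^\bullet$. Since $y'_n$ is nilpotent, its $G_0$-orbit $\Oc^{G_0}_{y'_n}$ is a nilpotent orbit contained in $\overline J^\bullet$, which is exactly the assertion.

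The only point requiring a word of care is that $0$ genuinely belongs to $\fzeta(\fg^{x_s})_1$, so that the hypothesis of part~(iii) is met; this is immediate because $\fzeta(\fg^{x_s})_1$ is a vector subspace of $V$. One should also note that $\overline J^\bullet$ is nonempty (indeed it contains a full-dimensional orbit, as established in Proposition~\ref{prop:maxorbI}), so the statement is not vacuous. I do not anticipate any genuine obstacle here: the corollary is essentially an immediate specialisation of Lemma~\ref{lem:ss-closure}(iii), and the entire content lies in recognising that taking the semisimple part to be $0$ forces the accompanying part $y'_n$ to be nilpotent and to lie in $\overline J^\bullet$.

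As a brief sanity check on the formulation, recall from Remark~\ref{rem:definition-Jordan-classes} that each $G_0$-Jordan class consists of orbits of a single dimension $d$, and that $\overline J^\bullet=\bigcup_{\Oc\in\mathcal{M}_{\overline J}}\Oc$ by Proposition~\ref{prop:maxorbI}; thus the nilpotent orbit produced above is automatically one of these maximal orbits of dimension $d$. This is consistent with the geometric picture that the nilpotent cone meets $\overline J$ in the fibre of the Steinberg map $\varphi$ over the image of $0$, and that this fibre contains an orbit of maximal dimension $d$ lying in $\overline J^\bullet$.
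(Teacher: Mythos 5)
Your proof is correct and is essentially the paper's own (implicit) argument: the corollary is stated without proof precisely because it follows by specialising Lemma~\ref{lem:ss-closure}(iii) to $y'_s=0\in\fzeta(\fg^{x_s})_1$, exactly as you do. Your additional observation that $\overline{J}^\bullet$ is a union of $G_0$-orbits (via Proposition~\ref{prop:maxorbI}), so that the single nilpotent point $y'_n$ yields a whole nilpotent orbit in $\overline{J}^\bullet$, correctly closes the only small gap one could worry about.
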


\subsection{A local study of the closure of a $G_0$-Jordan class}
\label{sec:local-study-closure}
We start with a local characterization of the closure of a $G_0$-Jordan class. 
\begin{lemma}\label{lem:local-global}The following statements are equivalent for a $G_0$-Jordan class $J$:
\begin{enumerate}
\item[$(i)$]\label{item:global} $\overline{J}$ is a union of $G_0$-Jordan classes;
\item[$(ii)$]\label{item:local} For every $y\in\overline{J}$ there exists 
a Zariski open neighbourhood  $U_y$ of $y$ in $J_{G_0}(y)$ such that $U_y\subset\overline{J}$.
\end{enumerate}
\end{lemma}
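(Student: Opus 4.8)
The plan is to prove the two implications separately, with essentially all the content concentrated in $(ii)\Rightarrow(i)$; the reverse implication is formal.

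For $(i)\Rightarrow(ii)$, suppose $\overline{J}$ is a union of $G_0$-Jordan classes. Given $y\in\overline{J}$, the class $J_{G_0}(y)$ is the unique Jordan class containing $y$, and since $\overline{J}$ is a union of such classes it must contain all of $J_{G_0}(y)$. One may therefore simply take $U_y=J_{G_0}(y)$, which is trivially an open neighbourhood of $y$ in itself (in either topology) and satisfies $U_y\subset\overline{J}$.

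For $(ii)\Rightarrow(i)$, I would show that for every $y\in\overline{J}$ the entire class $J_{G_0}(y)$ is contained in $\overline{J}$; granting this, the identity $\overline{J}=\bigcup_{y\in\overline{J}}J_{G_0}(y)$ exhibits $\overline{J}$ as a union of Jordan classes, which is exactly $(i)$. So fix $y\in\overline{J}$ and let $U_y$ be the neighbourhood provided by $(ii)$. By Corollary \ref{cor:JClc} the class $J_{G_0}(y)=G_0\cdot\fzeta(\fg^{y})^{reg}_1$ is irreducible, so $U_y$, being a nonempty open subset of it, is dense in $J_{G_0}(y)$. Passing to closures in $V$ then gives $J_{G_0}(y)\subset\overline{U_y}\subset\overline{J}$, the final inclusion holding because $U_y\subset\overline{J}$ and $\overline{J}$ is closed. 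This is precisely the desired containment.

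The only point requiring care — and the main (mild) obstacle — is the interplay between the Zariski and analytic topologies, owing to the ``analytic or Zariski'' phrasing in $(ii)$. Since $J$ is locally closed by Corollary \ref{cor:JClc}, it is in particular constructible, so its Zariski closure coincides with its analytic (Euclidean) closure; hence $\overline{J}$ is unambiguous and is closed in both topologies. If $(ii)$ furnishes only an analytic-open neighbourhood $U_y$, one must still verify that $U_y$ is Zariski-dense in the irreducible variety $J_{G_0}(y)$: this follows from the standard fact that a proper Zariski-closed subset of an irreducible complex variety is nowhere dense in the analytic topology, so it cannot contain the nonempty analytic-open set $U_y$, forcing the Zariski closure of $U_y$ to be all of $J_{G_0}(y)$. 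With this remark the density argument of the previous paragraph applies uniformly to both types of neighbourhood, completing the proof.
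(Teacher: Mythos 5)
Your proof is correct, but the substantive implication $(ii)\Rightarrow(i)$ follows a genuinely different route from the paper's. The paper fixes $J'=J_{G_0}(y)$ and shows that $J'\cap\overline{J}$ is non-empty, closed in $J'$, and --- using hypothesis $(ii)$ at \emph{every} point of $J'\cap\overline{J}$ --- also open in $J'$; since $J'$ is irreducible, hence connected in both the Zariski and the analytic topology (this is exactly what the paper's citation to Shafarevich supplies), the intersection equals $J'$. You instead use the hypothesis at a \emph{single} point per class: the neighbourhood $U_y$ is a non-empty open subset of the irreducible variety $J_{G_0}(y)$, hence Zariski dense in it (in the analytic case because a proper Zariski-closed subset of an irreducible complex variety is analytically nowhere dense), and taking Zariski closures yields $J_{G_0}(y)\subset\overline{U_y}\subset\overline{J}$. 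Both arguments ultimately rest on standard Zariski/analytic comparison facts for irreducible varieties; yours is slightly more economical and proves a formally stronger implication (one neighbourhood per class suffices, rather than neighbourhoods at all points of $J'\cap\overline{J}$), while the paper's open-and-closed argument needs only connectedness rather than the nowhere-density fact. Your preliminary remark that $\overline{J}$ is unambiguous because $J$ is locally closed is also sound, and indeed the paper makes the same observation later, in the proof of Proposition \ref{prop:local_holds}. One tiny citation slip: irreducibility of $J_{G_0}(y)$ is stated in Proposition \ref{eq:JordanIrr}; Corollary \ref{cor:JClc} adds the description $G_0\cdot\fzeta(\fg^{y})^{reg}_1$ and local closedness.
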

\begin{proof}The implication $(i)\Rightarrow (ii)$ is immediate, since $G_0$-Jordan classes are disjoint and we may take $U_y=J_{G_0}(y)$. Assume now that $(ii)$  holds. Let $y\in\overline{J}$ and set $J'=J_{G_0}(y)$. Then $J'\cap\overline{J}$ is a non-empty closed subset of $J'$. On the other hand, condition $(ii)$ implies that any point of $J'\cap \overline{J}$ has an open neighbourhood of $J'$ therein, therefore $J'\cap\overline{J}$ is also open in $J'$. Since $J'$ is a Zariski irreducible variety, it is connected 
in the Zariski 
topology, 
thus $J'\subset \overline{J}$ and $(i)$ holds. 
\end{proof}
By virtue of Lemma \ref{lem:local-global} we shall apply a local approach and look at the closure of a $G_0$-Jordan class in the neighbourhood of a point of $V$. For the rest of this subsection for any $y_s\in\mathcal S_V$ we will use the following notation: $\;\;\fm:=\fg^{y_s}$;  $M:=G^{y_s}\leq G$; and  $M_0:=M\cap G_0$ with identity component $M_0^\circ$. For any subset $X\subset \fm_1$, we will write $X^{reg,M}$ to indicate the regular part of $X$ for the action of $M$. We also recall that for any GIT quotient $\pi\colon X\to X/\!/H$ of a reductive algebraic group $H$ acting on a variety $X$, a subset $U$ of $X$ is called $\pi$-saturated or $H$-saturated  if $U=\pi^{-1}\pi(U)$. Saturated implies $H$-stable, the converse is not necessarily true.

For $\fm$ as above, we consider the $M_0$-stable subset of $\fm_1$ defined as follows:
\begin{equation*}
U_{\fm}=\{z\in\fm_1|~\fg^{z}\subset\fm\}\;.
\end{equation*}

\begin{lemma}\label{lem:Um}
With notations as above:
\begin{enumerate}
\item[$(i)$] $U_{\fm}$ is $M_0$-saturated;
\item[$(ii)$] $U_{\fm}$ is open in $\fm_1$;
\item[$(iii)$] For all $z=z_s+z_n\in U_{\fm}$ we have $\fzeta(\fg^{z_s})_1^{reg}+z_n=\big(\fzeta(\fm^{z_s})^{reg,M}_1+z_n\big)\cap U_{\fm}$;
\item[$(iv)$] For any $G_0$-Jordan class $J$ such that $J\cap U_{\fm}\neq\emptyset$, we have
\begin{equation}\label{eq:inclusionUm}
J\cap U_{\fm}=\bigcup_{i\in I_{J}}J_{M,i}\cap U_{\fm}\;,
\end{equation}
where 
$\{J_{M,i}~|~i\in I_J\}$ is the (finite) set of $M_0^\circ$-Jordan classes in $\fm_1$ such that $J_{M,i}\cap U_{\fm}\cap J\neq\emptyset$.
In addition, $\dim J_{M,i}= \dim J_{M,j}$ for any $i,j\in I_J$, so the closures of the $J_{M,i}\cap U_{\fm}$'s in \eqref{eq:inclusionUm} are the irreducible components of $J\cap U_{\fm}$;
\item[$(v)$] Let $y_s\in \mathcal S_V$ and $y=y_s+y_n$ for $y_n\in{\mathcal N}_V\cap \fm$. Then 
\begin{equation}\label{eq:normalizer}
J_{G_0}(y)\cap U_{\fm}=\fzeta(\fm)^{reg}_1+\bigcup_{n_i\in N_{G_0}(\fm)/M_0^\circ}n_i\cdot \Oc_{y_n}^{M_0^\circ}
\end{equation} and the locally closed sets  $\fzeta(\fm)^{reg}_1+n_i\cdot \Oc_{y_n}^{M_0^\circ}$ are the irreducible components of $J_{G_0}(y)\cap U_{\fm}$.
\end{enumerate}
\end{lemma}
\begin{proof}For $m = 2$, parts (i)-(ii) are  \cite[Lemma 2.1]{BH}. We propose a slightly different proof for (i). 
Saturation is equivalent to say that
$\fg^{z}\subset\fm$ if and only if $\fg^{z_s}\subset\fm $, for any $z=z_s+z_n\in\fm_1$. 
As $\fg^z=\fg^{z_s}\cap\fg^{z_n}$, one implication is immediate. We will now show that $\fg^{z_s}\not\subset\fg^{s}$  implies $\fg^z\not\subset\fg^{s}$  for any semisimple element $s\in \fg$ and any $z\in \fg^s$, independently of the $\ZZ_m$-grading. Since $z_s$ and $s$ commute, we can always find a  Cartan subalgebra  $\fh$ of $\fg$ containing both. Then
\begin{equation*}
\fg^s=\fh\oplus\left(\bigoplus_{\alpha\in\Phi(s)}\fg_\alpha\right)\;,\qquad\fg^{z_s}=\fh\oplus\left(\bigoplus_{\alpha\in\Phi(z_s)}\fg_\alpha\right)\;,
\end{equation*}
where  $\Phi(h)$ is the set of roots vanishing on an element $h\in\fh$. Since $(\Phi(s)+(\Phi\setminus\Phi(s)))\cap\Phi\subset \Phi\setminus\Phi(s)$, the reductive subalgebra $\fg^s\cap\fg^{z_s}=\fh\oplus \left(\bigoplus_{\alpha\in\Phi(s)\cap\Phi(z_s)}\fg_\alpha\right)$ stabilizes the subspace $X=\bigoplus_{\alpha\in\Phi(z_s)\setminus\Phi(s)}\fg_\alpha$. As $z_n\in \fg^s\cap\fg^{z_s}$ acts nilpotently on $X$,  there is a non-zero $\xi$ in there such that $[z_n,\xi]=0$. In other words $\xi\in\fg^{z}\setminus\fg^s$. 

%

To prove $(ii)$  we use the argument in \cite[Lemma 2.1]{broer}. We may assume $y_s\in \fc$ and that $\fh$ is a Cartan subalgebra of $\fg$, hence of $\fm$, containing $\fc$. The product $f=\prod_{\alpha\in\Phi\setminus\Phi(y_s)}\alpha$ is a homogeneous polynomial on $\fh$ that is invariant for the Weyl group of $\fm$. By Chevalley's restriction theorem $f$ extends to an $M$-invariant polynomial $F$ on $\fm$. By (i), $U_{\fm}=\{z\in\fm_1~|~\fg^{z_s}\subset\fm\}$, and it is not hard to verify that this is
equal to $\{z\in \fm_1~|~F(z)\neq0\}$, hence it is open in $\fm_1$.

Since $U_{\fm}$ is $M_0$-saturated, it is enough to prove $(iii)$ for $z=z_s\in U_{\fm}$. We have $\fg^{z_s}=\fm^{z_s}$, so $\fzeta(\fg^{z_s})_1=\fzeta(\fm^{z_s})_1$.  If $x\in \fzeta(\fg^{z_s})_1^{reg}$ then 
$\fg^x=\fg^{z_s}=\fm^{z_s}\subset\fm$, so $x\in \fzeta(\fm^{z_s})_1^{reg,M}\cap U_{\fm}$. Conversely, if $x\in  \fzeta(\fm^{z_s})_1^{reg,M}\cap U_{\fm}$, then 
$\fg^x\subset\fm$, so $\fg^x=\fm^x=
\fg^{z_s}$ and $x\in \fzeta(\fg^{z_s})_1^{reg}$. 

We prove (iv). Clearly $J\cap U_{\fm}\subset \bigcup_{i\in I_{J}}J_{M,i}\cap U_{\fm}$, and we now show the other inclusion.
Let $z=z_s+z_n\in J\cap J_{M,i}\cap U_{\fm}$ for some $i\in I_{J}$, so $J=J_{G_0}(z)$ and $J_{M,i}=J_{M_0^\circ}(z)$.
Combining the fact that $U_{\fm}$ is $M_0^\circ$-stable  with (iii)  gives
\begin{align*}
J_{M,i}\cap U_{\fm}&=\big(M_0^\circ\cdot(\fzeta(\fm^{z_s})_1^{reg,M}+z_n)\big)\cap U_{\fm}=M_0^\circ\cdot\big((\fzeta(\fm^{z_s})_1^{reg,M}+z_n)\cap U_{\fm}\big)\\
&=M_0^\circ\cdot(\fzeta(\fg^{z_s})_1^{reg}+z_n)\subset G_0\cdot(\fzeta(\fg^{z_s})_1^{reg}+z_n)=J\;,
\end{align*}
establishing \eqref{eq:inclusionUm}. Corollary \ref{cor:JClc} then gives
\begin{align*}\dim J_{M,i}=\dim M_0^\circ-\dim\fm^{z}_0+\dim\fzeta(\fm^{z_s})_1=\dim M_0^\circ -\dim\fg^z_0+\dim\fzeta(\fg^{z_s})_1,\end{align*}
which is independent of $i\in I_J$. Equation \eqref{eq:inclusionUm} expresses $J\cap U_\fm$ as the finite disjoint union 
of irreducible locally closed subsets of the same dimension, and the claim on the irreducible components follows directly. 

Finally, we prove (v). By construction,
\begin{align*}
\fzeta(\fm)^{reg}_1+\bigcup_{n_i\in N_{G_0}(\fm)/M_0^\circ}n_i\cdot \Oc_{y_n}^{M_0^\circ}&= U_{\fm}\cap \big(\fzeta(\fm)^{reg}_1+N_{G_0}(\fm)\cdot y_n\big)\\
&= U_{\fm}\cap \big(N_{G_0}(\fm)\cdot( \fzeta(\fm)^{reg}_1+ y_n)\big)\subset U_{\fm}\cap J_{G_0}(y).
\end{align*}
Conversely, let $z\in J_{G_0}(y)\cap U_{\fm}$. Then, there is $g\in G_0$ such that $\fg^{z_s}=g\cdot\fm$ and $z_n=g\cdot y_n$. Saturation of $U_{\fm}$ gives $\fg^{z_s}\subset \fm$, 
hence $g\cdot\fm\subset\fm$ 
 and $$z\in  N_{G_0}(\fm)\cdot( \fzeta(\fm)^{reg}_1+ y_n)= \fzeta(\fm)^{reg}_1+N_{G_0}(\fm)\cdot y_n= \fzeta(\fm)^{reg}_1+\bigcup_{n_i\in N_{G_0}(\fm)/M_0^\circ}n_i\cdot \Oc_{y_n}^{M_0^\circ}\;,$$
establishing \eqref{eq:normalizer}.  Last claim  follows from (iv) once we prove that the sets $\fzeta(\fm)^{reg}_1+n_i\cdot \Oc_{y_n}^{M_0^\circ}$ are closed in $J_{G_0}(y)\cap U_{\fm}$. 
On the one hand
\begin{align*}
\overline{\fzeta(\fm)^{reg}_1+n_i\cdot \Oc_{y_n}^{M_0^\circ}}^{J_{G_0}(y)\cap U_{\fm}}\subset\overline{\fzeta(\fm)_1+n_i\cdot \Oc_{y_n}^{M_0^\circ}}^{\fm_1}
=\fzeta(\fm)_1+\overline{n_i\cdot \Oc_{y_n}^{M_0^\circ}}^{\fm_1}.
\end{align*}
On the other hand, if $z\in \overline{\fzeta(\fm)^{reg}_1+n_i\cdot \Oc_{y_n}^{M_0^\circ}}^{J_{G_0}(y)\cap U_{\fm}}$, then \eqref{eq:normalizer}  gives  $z\in \fzeta(\fm)^{reg}_1+n_j\cdot \Oc_{y_n}^{M_0^\circ}$ for some $j$.
Hence, $z\in (\fzeta(\fm)_1+n_j\cdot \Oc_{y_n}^{M_0^\circ})\cap (\fzeta(\fm)_1+\overline{n_i\cdot \Oc_{y_n}^{M_0^\circ}}^{\fm_1})$. Since the two nilpotent orbits have the same dimension, we necessarily have $i=j$, completing the proof.
\end{proof}
Let $H$ and $L$ be reductive algebraic groups acting on an affine variety $X$,  with $H\subset L$. Then $H$ acts with trivial stabilizers on the product $L\times X$ via 
$h\cdot(l,x)=(lh^{-1},h\cdot x)$: we set 
$L\times^{H}X:=(L\times X)/{H}\cong(L\times X)/\!/{H}$
and note that $L$ acts on $L\times^{H}X$ by multiplication from the left. 
The class of $(l,x)\in L\times X$ will be denoted by the symbol $l*x\in L\times^{H}X$.  Since $H$ acts on $L\times X$ with trivial stabilizer, the projection $L\times X\to L\times^{H}X$ is a principal $H$-bundle \cite[III.1, Corollaire 1]{luna}.  In other words, there is a surjective \'etale map $Y\to L\times^{H}X$ such that the base change of $L\times X\to L\times^H X$ through $Y\to L\times^{H}X$ is isomorphic to the projection $X\times Y\to Y$. Hence, $A$ is an open  subset of  $L\times^{H}X$ if and only if its pre-image is an open $H$-stable subset of $L\times X$.  In particular, $L\times^H A'$ is open in  $L\times^H X$ for any open $H$-stable subset $A'$ of $X$.  

We consider the natural action maps 
\begin{align}
\label{eq:action-maps}
\tilde{\mu}&\colon G\times\fm\to \fg\;,&&\tilde{\mu}_0\colon G_0\times\fm_1\to V\;,
\end{align} 
and the induced maps $
{\mu}\colon G\times^M\fm\to \fg$ and ${\mu}_0\colon G_0\times^{M_0}\fm_1\to V$. 
\begin{lemma}
\label{lem:etaleandsmooth}
The maps $\tilde\mu, \tilde\mu_0$ are smooth at $(1,y_s)$ and the induced maps $\mu, \mu_0$ are \'etale at $1*y_s$.
\end{lemma}
\begin{proof}
The differential of the map $\tilde{\mu}$ at $(1,y_s)$  maps any element $(x',y')\in \fg\oplus\fm$ to $[x',y_s]+y'$, therefore it is surjective 
by \eqref{eq:orthogonal} and since $[\fg,y_s]\cap\fm=0$ due to the semisimplicity of $y_s$. The differential of the induced map $\mu$ at $1*y_s$ is  also surjective, hence bijective by dimensional reasons. The restriction of the differential of the map $\tilde{\mu}$ at $(1,y_s)$ to the degree $1$ terms readily implies the surjectivity of the differential of  $\tilde{\mu}_0$ at $(1,y_s)$, whence the differential of  $\mu_0$ at $1*y_s$ is bijective.   
\end{proof}

%
We will also consider
the GIT quotient maps 
\begin{align*}
\pi_0\colon  G_0\times^{M_0}\fm_1\to \big(G_0\times^{M_0}\fm_1\big)/\!/G_0\;,&&\pi_{\fm_1}\colon \fm_1\to \fm_1/\!/M_0
\end{align*} 
associated with multiplication from the left by $G_0$,  and the adjoint action of  $M_0$,  respectively.  Recall that  we have natural identifications $\big(G_0\times^{M_0}\fm_1\big)/\!/G_0\cong \fm_1/\!/M_0$.  We will invoke a variant of Luna's \'etale slice Theorem \cite{luna} and its consequences to deduce properties of the closure of $G_0$-Jordan classes. 

\begin{corollary}\label{lem:mu0}
There exists
an affine  open neighbourhood ${\mathcal U}$ of $y_s$ in $\fm_1$, 
which is $M_0$-saturated 
and
such that the restriction of $\mu_0$ to $G_0\times^{M_0}{\mathcal U}$ is \'etale with Zariski open image $G_0\cdot{\mathcal U}$ in $V$.
\end{corollary}
\begin{proof}
By Lemma \ref{lem:etaleandsmooth}, the map $\mu_0$ is \'etale at $1*y_s$. 
The orbit $\Oc_{1*y_s}^{G_0}$ is closed (since it is an orbit of minimal dimension $\dim G_0-\dim M_0$)
and so is the semisimple orbit $\Oc_{y_s}^{G_0}$, cf. \cite[Prop. 3]{MR0430168}. It is also not hard to verify that the restriction of $\mu_0$ to $\Oc_{1*y_s}^{G_0}$ 
is injective. 
By \cite[Lemme Fondamental, \S II.2]{luna} applied to $X=G_0\times^{M_0}\fm_1$ and $Y=V$ there exists  an affine  $\pi_0$-saturated open neighbourhood  $U_X$  of $1*y_s$ in  $G_0\times^{M_0}\fm_1$ such that 
the restriction of $\mu_0$ to it  is \'etale and the image  is an affine open subset of $V$, saturated for 
the Steinberg map $\varphi\colon V\to V/\!/ G_0$. In fact $U_X$ is $G_0\times^{M_0}\mathcal U$ where $\mathcal U=\pi^{-1}_{\fm_1}\pi_0(U_X)$ is the desired $M_0$-saturated affine  open neighbourhood of $y_s$ in $\fm_1$.
\end{proof}

\begin{proposition}\label{prop:inclusion-closure}
Let $J$ be a $G_0$-Jordan class in $V$ and $y=y_s+y_n$, where $y_s\in \mathcal S_V$ and $y_n\in{\mathcal N}_V\cap \fm$.
Then
$y\in \overline J$ if and only if $y\in \overline{J_{M,l}}$ for some $l\in I_{J}$, where
$\{J_{M,i}~|~i\in I_J\}$ is as in Lemma \ref{lem:Um} $(iv)$.
\end{proposition}
\begin{proof}
Consider the restriction of $\mu_0$ from $G_0\times^{M_0}\mathcal U$ to $V$ determined in Corollary \ref{lem:mu0}.
Assume $y\in\overline J$, so $y_s\in\overline J$ by Lemma \ref{lem:ss-closure}.   We consider the commutative diagram \\
$$\begin{CD}G_0\times^{M_0}(\overline{J}\cap\mathcal{U})     @>>>  G_0\cdot\mathcal{U}\cap\overline{J}\\@V{j}VV   @VVV\\
G_0\times^{M_0}\mathcal{U}     @>\mu_0>>  G_0\cdot\mathcal{U}\\\end{CD}$$
 We claim that it is a pull-back diagram. 
 Indeed, the pull-back of $\mu_0$ through the closed inclusion $\overline J\cap G_0\cdot\mathcal{U}\to G_0\cdot\mathcal{U}$ is the restriction $X:=\mu_0^{-1}(\overline{J})\to\overline{J}\cap G_0\cdot\mathcal{U}$. Moreover,  $\overline{J}$ is a reduced subscheme and $\mu_0$ is \'etale, so  $X$ is a $G_0$-stable closed subscheme of  $G_0\times^{M_0}{\mathcal U}$. By \cite[Lemma 3]{luna}, there exists a closed $M_0$-stable subscheme $Y'$ of ${\mathcal U}$ such that $X=G_0\times^{M_0}Y'$ and  the natural morphism $X\to G_0\times^{M_0}\mathcal{U}$ is induced by the inclusion $Y'\subset\mathcal{U}$.  Observe  that $Y'=\mu_0(1*Y')\subset \overline{J}\cap\mathcal{U}$, so the morphism $X\to G_0\times^{M_0}\mathcal{U}$ factors through the morphism $f\colon X\to G_0\times^{M_0}(\overline{J}\cap\mathcal{U})$. Let  $f'\colon G_0\times^{M_0}(\overline{J}\cap\mathcal{U})\to X$ be the unique morphism  coming from the universal property of the pull-back. Now $j\circ f\circ f'=j$, hence $f\circ f' ={\rm id}$  whilst  $f'\circ f={\rm id}$ follows from the universal property.
Hence, the restriction of $\mu_0$ to $G_0\times^{M_0}(\overline{J}\cap\mathcal{U})\to \overline{J}$   is again \'etale, whence open. As $U_{\fm}$ is open in $\fm$, 
$U_\fm\cap\mathcal U\cap\overline J$ is open in $\mathcal U\cap\overline J$.
Thus, $G_0\times^{M_0}(U_\fm\cap\mathcal U\cap\overline J)\to\overline J$ is an open morphism, too.

If $X$ is an irreducible component of $U_{\fm}\cap\mathcal U\cap\overline J$ containing $y$, then $G_0\times^{M_0}X$ dominates the irreducible set $\overline J$, so 
$G_0\cdot X\cap J\neq\emptyset$ because $J$ is locally closed and thus $X\cap J\neq\emptyset$ by  $G_0$-stability. Note also that $X\cap J$ is open in $X$, hence it is irreducible and dense in $X$.
Now 
$$X\cap J\subset J\cap U_{\fm}=\bigcup_{i\in I_{J}}J_{M,i}\cap U_{\fm}$$ by Lemma \ref{lem:Um} (iv),
so $X\cap J\subset \overline{J_{M,l}\cap U_\fm}^{J\cap U_\fm}\subset \overline{J_{M,l}}$ for some $l\in I_J$ by irreducibility. Finally $\overline X=\overline{X\cap J}\subset \overline{J_{M,l}}$ by density and $y\in \overline{J_{M,l}}$ follows since $y\in X$.

Assume now $y\in \overline{J_{M,l}}$ for some $l\in I_{J}$. Then $\overline{J_{M,l}\cap U_\fm}=\overline{J_{M,l}}$ since $J_{M,l}$ is irreducible and $U_\fm$ is open in $\fm_1$, so
$
y\in \overline{J_{M,l}}= \overline{J_{M,l}\cap U_{\fm}}\subset\overline{J\cap U_{\fm}}
\subset
\overline J
$.
\end{proof}

\begin{corollary}\label{cor:mu}
Let $y_s\in \mathcal S_V$ and let $y=y_s+y_n$ for $y_n\in{\mathcal N}_V\cap \fm$. Then there exists
an $M_0^\circ$-stable open neighbourhood $U_1$ of $y$ in $\fm_1$ such that $M_0\cdot U_1\subset  U_{\fm}$ and 
\begin{equation}
\label{eq:intersection}
J_{G_0}(y)\cap U_1=J_{M^\circ_0}(y)\cap U_1.
\end{equation}
\end{corollary}
\begin{proof}
Lemma \ref{lem:Um} (v) gives  
\begin{equation*}
\begin{aligned}
J_{G_0}(y)\cap U_{\fm}&=\big(\fzeta(\fm)^{reg}_1+\bigcup_{n_i\in N_{G_0}(\fm)/M_0^\circ}n_i\cdot \Oc_{y_n}^{M_0^\circ}\big)=\big(\fzeta(\fm)_1+\bigcup_{n_i\in N_{G_0}(\fm)/M_0^\circ}n_i\cdot \Oc_{y_n}^{M_0^\circ}\big)\cap U_{\fm}\;,
\end{aligned}
\end{equation*}
since $\fzeta(\fm)^{reg,M}_1=\fzeta(\fm)_1$. The subsets $(\fzeta(\fm)_1+n_i\cdot \Oc_{y_n}^{M_0^\circ})\cap U_{\fm}$ are the irreducible components of  $J_{G_0}(y)\cap U_{\fm}$ by Lemma \ref{lem:Um} (v).
They are disjoint, whence open,  and  $M_0^\circ$-stable. Hence, there exists an $M_0^\circ$-stable open subset ${\mathcal U}_y$ in $\fm_1$  such that  $J_{G_0}(y)\cap U_{\fm}\cap{\mathcal U}_y=(\fzeta(\fm)_1+\Oc_{y_n}^{M_0^\circ})\cap U_{\fm}$.
Then $U_1:={\mathcal U}_y\cap U_{\fm}$ is an  $M_0^\circ$-stable Zariski open  neighbourhood of $y$ in $\fm_1$ satisfying
$$J_{G_0}(y)\cap U_1=(\fzeta(\fm)_1+\O_{y_n}^{M_0^\circ})\cap U_1=J_{M^\circ_0}(y)\cap U_1.$$ Finally $M_0\cdot U_1\subset M_0\cdot U_{\fm}= U_{\fm}$ since $U_{\fm}$ is $M_0$-stable. \end{proof}
\begin{theorem}\label{prop:local_holds}
Let $J$ be a $G_0$-Jordan class in $V$. Then $\overline{J}$ is a union of $G_0$-Jordan classes and it is decomposable, i.e., it contains the semisimple and nilpotent components of all its elements.
\end{theorem}
\begin{proof}
We will show that condition $(ii)$ in Lemma \ref{lem:local-global} is satisfied for any $y=y_s+y_n\in\overline{J}$. 

Let $U_1$ be as in Corollary \ref{cor:mu} and $\mathcal U$ as in Corollary \ref{lem:mu0}. By saturation of ${\mathcal U}$ we have $y\in U_1\cap {\mathcal U}$.  
We consider the $M_0$-stable open subset $U_1'=M_0\cdot U_1\subset U_{\fm}$ of $\fm_1$ and apply $M_0$ to both sides of \eqref{eq:intersection} to get
\begin{equation}\label{eq:intersection-M0}
J_{G_0}(y)\cap U_1'=M_0\cdot\big(J_{G_0}(y)\cap U_1\big)=M_0\cdot\big(J_{M_0^\circ}(y)\cap U_1\big)\subset \big(M_0\cdot J_{M_0^\circ}(y)\big)\cap U_1'.
\end{equation}
We then set ${\mathcal V} =U'_1\cap{\mathcal U}$ and obtain an $M_0$-stable open neighbourhood of $y$ in
$\fm_1$.  Corollary \ref{lem:mu0} then guarantees that $G_0\cdot {\mathcal V}$ is an open neighbourhood of $y$ in $V$. We will show that $U_y:=J_{G_0}(y)\cap G_0\cdot {\mathcal V}$ is the sought neighbourhood of $y$ in $J_{G_0}(y)$ contained in $\overline{J}$. 

Proposition \ref{prop:inclusion-closure} ensures that $y\in \overline{J_{M, l}}\cap \mathcal V$ for  some $l\in I_{J}$. Now $y_s\in\fzeta(\fm)_1$, so
combining \eqref{eq:intersection-M0}, Remark \ref{rem:definition-Jordan-classes} \eqref{item:centro} and Lemma   \ref{lem:ss-closure} (iv) we get 
\begin{equation*}
\begin{aligned}
J_{G_0}(y)\cap {\mathcal V}&\subset \big(M_0\cdot J_{M_0^\circ}(y)\big)\cap{\mathcal V}
=\Big(M_0\cdot\big(\fzeta(\fm)_1+\O_{y_n}^{M_0^\circ}\big)\Big)\cap {\mathcal V}\\
&\subset  \Big(M_0\cdot\overline{J_{M, l}}\Big)\cap\mathcal V
=M_0\cdot \Big(\overline{J_{M, l}}\cap \mathcal V\Big)
\subset \overline{J}\cap \mathcal V\;,
\end{aligned}
\end{equation*}
where for the last inclusion we invoke Proposition \ref{prop:inclusion-closure} once more. 
We finally arrive at
\begin{align*}
J_{G_0}(y)\cap G_0\cdot \mathcal V = 
G_0 \cdot (J_{G_0}(y) \cap \mathcal{V})\subset
G_0\cdot (\overline{J}\cap \mathcal V)\subset \overline{J},\end{align*}
This proves that $\overline{J}$ is the union of $G_0$-Jordan classes.

We finally prove that $\overline{J}$ is decomposable. Let $y=y_s+y_n\in\overline J$ and $J_{G_0}(y)$ the corresponding $G_0$-Jordan class. Then 
$y_s\in\overline J$ by Lemma \ref{lem:ss-closure} (i) and $$y_n\in \fzeta(\fm)_1+y_n= \overline{\fzeta(\fm)_1^{reg}+y_n}\subset \overline{J_{G_0}(y)}\subset \overline J\;,$$
where we used our previous result $J_{G_0}(y)\subset\overline J$.
\end{proof}

\begin{theorem}
\label{thm:union-Jordan-classes}
Let $J$ be a  $G_0$-Jordan class and let $S$ be a sheet in $V$. Then $\overline{J}^\bullet$, $\overline{J}^{reg}$ and $S$ are unions of $G_0$-Jordan classes.
\end{theorem}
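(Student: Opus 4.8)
The plan is to reduce everything to Proposition \ref{prop:local_holds}, which already gives us that $\overline{J}$ is a union of $G_0$-Jordan classes, together with one elementary observation about how the graded centralizer dimensions behave along a class. The three assertions will then follow by selecting, from among the finitely many classes in $\overline{J}$, those on which the relevant dimension is extremal.

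First I would record the key fact: for every $l\in\ZZ_m$ the function $x\mapsto\dim\fg^x_l$ is constant on each $G_0$-Jordan class. Indeed, if $x\overset{G_0}{\sim}y$ then by Proposition \ref{thm:fullcentralizer}$(ii)$ there is $g\in G_0$ with $\fg^y=g\cdot\fg^x$. Since $G_0$ is connected with Lie algebra $\fg_0$ and $[\fg_0,\fg_l]\subset\fg_l$, the adjoint action of every $g\in G_0$ is a graded automorphism of $\fg$; as $\fg^x$ and $\fg^y$ are homogeneous, this forces $\fg^y_l=g\cdot\fg^x_l$ and hence $\dim\fg^y_l=\dim\fg^x_l$ for all $l$. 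In particular both $\dim\fg^x=\sum_l\dim\fg^x_l$ and $\dim\fg^x_0$ are constant along each $G_0$-Jordan class.

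Then for the two closures I would argue as follows. Write $\overline{J}=\bigsqcup_k J_k$ as the finite (by Proposition \ref{thm:fullcentralizer}) disjoint union of the $G_0$-Jordan classes it contains, and let $c_k$ (resp.\ $c^0_k$) denote the constant value of $\dim\fg^x$ (resp.\ $\dim\fg^x_0$) on $J_k$. By Definition \ref{def:regularity}, $\overline{J}^{reg}$ is exactly the set of points of $\overline{J}$ at which $\dim\fg^x$ attains its minimum over $\overline{J}$, namely $\min_k c_k$; hence $\overline{J}^{reg}=\bigcup_{k:\,c_k=\min_j c_j}J_k$ is a union of whole $G_0$-Jordan classes. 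The identical argument with $c^0_k$ in place of $c_k$ shows that $\overline{J}^\bullet=\bigcup_{k:\,c^0_k=\min_j c^0_j}J_k$ is likewise a union of $G_0$-Jordan classes.

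Finally, for a sheet $S$ I would invoke Proposition \ref{prop:dense}: there exists a $G_0$-Jordan class $J_0$ with $S=\overline{J_0}^\bullet$, so the previous paragraph applied to $J_0$ shows that $S$ is a union of $G_0$-Jordan classes. I do not expect any genuine obstacle in this theorem: all the real difficulty is concentrated in Proposition \ref{prop:local_holds} (that $\overline{J}$ is a union of classes at all), and once that together with the grading-invariance of the centralizer dimensions is in hand, the statement is pure bookkeeping with minima of finitely many integer-valued functions that are constant along classes.
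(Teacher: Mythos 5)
Your proof is correct and follows essentially the same route as the paper: reduce to Proposition \ref{prop:local_holds}, use that $\dim\fg^x$ and $\dim\fg^x_0$ (equivalently, $G$- and $G_0$-orbit dimensions) are constant along each $G_0$-Jordan class, and invoke Proposition \ref{prop:dense} for sheets. The only difference is that you spell out the constancy of the graded centralizer dimensions via Proposition \ref{thm:fullcentralizer}$(ii)$, which the paper leaves implicit.
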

\begin{proof}By Theorem \ref{prop:local_holds}, the closure $\overline{J}$ is a union of $G_0$-Jordan classes. Since all such classes are 
of constant $G$- and $G_0$-orbit dimension,  
it follows that also $\overline{J}^\bullet$ and $\overline{J}^{reg}$ are unions of $G_0$-Jordan classes. The statement for $S$  is a direct consequence of Proposition \ref{prop:dense}.
\end{proof}
We conclude this subsection with the following important consequence of the local study of the closure of a $G_0$-Jordan class.
\begin{proposition}
\label{prop:Jordan-classes-are-smooth}
$G_0$-Jordan classes are smooth.
\end{proposition}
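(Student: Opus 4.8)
The plan is to prove smoothness pointwise, reducing the local geometry of a $G_0$-Jordan class $J$ around an arbitrary point to that of a product of a vector space with a single nilpotent orbit, by means of the slice constructed in Lemma \ref{lem:mu0}. Since smoothness is a local (analytic) property and $G_0$ acts on $V$ by algebraic automorphisms, it suffices to show that $J$ is smooth at every $y=y_s+y_n\in J$, and we are moreover free to replace $y$ by any $G_0$-conjugate. Put $\fm=\fg^{y_s}$, $M=G^{y_s}$ and $M_0=M\cap G_0$, so that $y_n\in\mathcal N_V\cap\fm$ and Lemmas \ref{lem:mu} and \ref{lem:mu0} apply. Let $\mathcal V$ be the $M_0$-stable analytic slice neighbourhood of $y_s$ in $\fm_1$ from Lemma \ref{lem:mu0}(ii) and $U_1'$ the $M_0^\circ$-stable neighbourhood from Lemma \ref{lem:mu}(ii). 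Because $y_s\in\overline{\Oc_y^{M_0^\circ}}=y_s+\overline{\Oc_{y_n}^{M_0^\circ}}$ and $\mathcal V\cap U_1'$ is an $M_0^\circ$-stable neighbourhood of $y_s$, it meets the orbit $\Oc_y^{M_0^\circ}$; after an $M_0^\circ$-conjugation we may therefore assume $y\in\mathcal V\cap U_1'$.

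First I would transfer the question through the slice. By Lemma \ref{lem:mu0}(ii) the restriction of $\mu_0$ to $G_0\times^{M_0}\mathcal V$ is an analytic diffeomorphism onto the open neighbourhood $G_0\cdot\mathcal V\ni y$ of $y$ in $V$, carrying $1*y$ to $y$; by $G_0$-equivariance and $G_0$-invariance of $J$ one has $\mu_0^{-1}(J\cap G_0\cdot\mathcal V)=G_0\times^{M_0}(J\cap\mathcal V)$. As a diffeomorphism preserves smoothness, $J$ is smooth at $y$ if and only if $G_0\times^{M_0}(J\cap\mathcal V)$ is smooth at $1*y$. The projection $G_0\times^{M_0}\fm_1\to G_0/M_0$ is a vector bundle over the smooth homogeneous space $G_0/M_0$, locally trivial in the analytic topology, and $G_0\times^{M_0}(J\cap\mathcal V)$ is the associated sub-bundle with fibre the $M_0$-stable set $J\cap\mathcal V$. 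Trivialising over a neighbourhood of the base point $eM_0$ identifies it, near $1*y$, with a product $(G_0/M_0)\times(J\cap\mathcal V)$; hence it is smooth at $1*y$ exactly when the fibre $J\cap\mathcal V\subset\fm_1$ is smooth at $y$.

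It remains to show $J\cap\mathcal V$ is smooth at the single point $y$, for which I would pass to the still smaller neighbourhood $\mathcal V\cap U_1'$ of $y$. There the identity \eqref{eq:intersection} of Lemma \ref{lem:mu}(ii) gives $J\cap(\mathcal V\cap U_1')=J_{M_0^\circ}(y)\cap(\mathcal V\cap U_1')$, so near $y$ the fibre coincides with the single $M_0^\circ$-Jordan class $J_{M_0^\circ}(y)$. By the computation in the proof of Lemma \ref{lem:mu}(ii) one has $J_{M_0^\circ}(y)=\fzeta(\fm)_1+\Oc_{y_n}^{M_0^\circ}$. Since $y_s\in\fzeta(\fm)_1$ is central in $\fm$, the connected group $M_0^\circ$ fixes $\fzeta(\fm)_1$ pointwise, while $y_n$, being nilpotent and homogeneous, lies in $[\fm,\fm]_1$ and hence $\Oc_{y_n}^{M_0^\circ}\subset[\fm,\fm]_1$. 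As $\fm_1=\fzeta(\fm)_1\oplus[\fm,\fm]_1$ is a direct sum, the addition map restricts to an isomorphism of varieties $\fzeta(\fm)_1\times\Oc_{y_n}^{M_0^\circ}\xrightarrow{\sim}\fzeta(\fm)_1+\Oc_{y_n}^{M_0^\circ}$. The target is therefore the product of an affine space with a single group orbit, and is smooth because orbits are smooth; so $J_{M_0^\circ}(y)$, and with it the fibre $J\cap\mathcal V$, is smooth at $y$. Running the two reductions backwards yields smoothness of $J$ at $y$, and as $y\in J$ was arbitrary, $J$ is smooth.

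The conceptual content is entirely in the local model, so the main obstacle is organisational rather than deep: one must keep track of the several slice neighbourhoods and check they can be arranged simultaneously (so that $y\in\mathcal V\cap U_1'$ after an $M_0^\circ$-conjugation), and one must be careful that the smoothness of the fibre is tested at the single point $y$ --- where $J\cap\mathcal V$ reduces to one Jordan class --- rather than globally, where $M_0/M_0^\circ$ may permute several pieces. Both points are already contained in the machinery underlying Lemmas \ref{lem:mu}--\ref{lem:mu0} and the proof of Proposition \ref{prop:local_holds}; beyond these, the only inputs are Luna's slice theorem and the smoothness of group orbits.
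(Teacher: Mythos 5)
Your proof is correct, and its overall strategy coincides with the paper's: localize at $y$, use the Luna-slice package of Lemmas \ref{lem:mu}--\ref{lem:mu0} to identify $J$ near $y$ with an explicit model inside $\fm_1=\fg^{y_s}_1$ (affine space times a nilpotent orbit), and transfer smoothness back through $G_0\times^{M_0}(-)$ and $\mu_0$. The implementation, however, differs in two ways worth noting. First, where you use the analytic diffeomorphism of Lemma \ref{lem:mu0}(ii) and analytic local triviality of the associated bundle $G_0\times^{M_0}\fm_1\to G_0/M_0$, the paper stays entirely in the algebraic category: it invokes Luna's result that $p\colon G_0\times\overline{J}\to G_0\times^{M_0}\overline{J}$ is a principal $M_0$-bundle and then descends smoothness through $p$ and through the \'etale map $\mu_0|_{G_0\times^{M_0}\mathcal U}$ using \cite[\'Exp 1, Corollaire 9.2]{SGA1}, thereby producing directly a smooth \emph{Zariski}-open neighbourhood of $y$ in $J$. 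Your analytic route is more elementary (no \'etale descent), but it establishes smoothness of the analytic germ at each point, so to conclude the proposition as stated you should say explicitly that a complex variety is smooth at a point if and only if its analytic germ there is a manifold --- a standard fact (the paper uses GAGA-type identifications elsewhere, e.g.\ in Proposition \ref{prop:local_holds}, but its smoothness proof does not need this step). Second, your smooth local model is the single class $J_{M_0^\circ}(y)=\fzeta(\fm)_1+\Oc_{y_n}^{M_0^\circ}\cong\fzeta(\fm)_1\times\Oc_{y_n}^{M_0^\circ}$ coming from Lemma \ref{lem:mu}(ii), whereas the paper uses the coarser description of Lemma \ref{lem:Um}(v), $J\cap U_{\fm}=\fzeta(\fm)^{reg}_1+\bigcup_{i} n_i\cdot\Oc_{y_n}^{M_0^\circ}$, which is smooth as a finite union of disjoint smooth pieces of equal dimension; both suffice. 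Finally, a small simplification: the $M_0^\circ$-conjugation you allow for is unnecessary, since $\mathcal V\cap U_1'$ is open and $M_0^\circ$-stable and meets $\Oc_y^{M_0^\circ}$, hence contains the whole orbit and in particular $y$ itself.
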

\begin{proof}
Let $J=J_{G_0}(y)$ be a $G_0$-Jordan class in $V$ and $\fm=\fg^{y_s}$.  We will show that $y$ has a smooth Zariski open neighbourhood in $J$.  
Let $U_{\fm}$ and ${\mathcal U}$ be the  open neighbourhoods of $y_s$ and $y$ in $\fm_1$
as in Lemma \ref{lem:Um} and Corollary \ref{lem:mu0}, respectively. 

By Lemma \ref{lem:Um} (v), the intersection $J\cap U_{\fm}$  is smooth, therefore $J\cap U_{\fm}\cap {\mathcal U}$ is non-empty and smooth as well.
Recall that 
$p\colon G_0\times\overline{J}\to G_0\times^{M_0}\overline{J}$ is a principal $M_0$-bundle, so there is a surjective \'etale map $f\colon Y\to G_0\times^{M_0}\overline{J}$ such that the base change $X\to Y$ of $G_0\times\overline{J}\to G_0\times^{M_0}\overline{J}$ is isomorphic to the projection $\tilde{p}\colon M_0\times Y\to Y$. Being the base change of an \'etale 
map, the induced morphism $\tilde{f}\colon M_0\times Y\to G_0\times\overline{J}$  is again so. By \cite[\'Exp 1, Corollaire 9.2]{SGA1}, $G_0\times(J\cap U_{\fm}\cap{\mathcal U})$ is smooth if and only if $\tilde{f}\tilde{p}^{-1}f^{-1}(G_0\times^{M_0} (J\cap U_{\fm}\cap{\mathcal U}))=p^{-1} \big(G_0\times^{M_0}(J\cap U_{\fm}\cap{\mathcal U})\big)$ is so. One may verify that the scheme-theoretic fiber of $G_0\times^{M_0} (J\cap U_{\fm}\cap{\mathcal U})$ through $p$ is $G_0\times(J\cap U_{\fm}\cap{\mathcal U})$ hence $G_0\times^{M_0} (J\cap U_{\fm}\cap{\mathcal U})$ is smooth.
Invoking again  \cite[\'Exp 1, Corollaire 9.2]{SGA1} we conclude that  $\mu_0\big(G_0\times^{M_0} (J\cap U_{\fm}\cap{\mathcal U})\big)$ is smooth and it is  a smooth open neighbourhood of $y$ in $J$.
\end{proof}

\subsection{Regularity questions}
\label{subsec:regularity questions}
Let $J=J_{G_0}(x_s+x_n)$ be a $G_0$-Jordan class. Then $\overline{J}^{reg}\subset\overline{J}^\bullet$ since $J$ is irreducible, hence $\overline J$ too, and Lemma \ref{lemma:regvsbullet} is in force. Note that $\overline{J}^\bullet=\overline{J}^{reg}$ whenever $x_s=0$, because 
$J=\fzeta(\fg)_1\times\O^{G_0}_{x_n}$ 
and orbits are locally closed, so $J=\overline{J}^\bullet=\overline{J}^{reg}$.
The equality $\overline{J}^\bullet=\overline{J}^{reg}$ is always satisfied in the symmetric case $m=2$ due to Corollary \ref{corollaryK-R}
and one may wonder if $\overline{J}^\bullet=\overline{J}^{reg}$ also for $m\geq 3$, by
combining Theorem \ref{thm:union-Jordan-classes} and the fact that $G_0$-Jordan classes are defined in terms of 
regular parts for the action of $G$, cf. 
Corollary \ref{cor:JClc}.

However, this is not the case. A reason is that open $G_0$-orbits $\O^{G_0}$ in irreducible components of the fibers of the Steinberg map $\varphi\colon V\to V/\!/ G_0\cong \fc/W_{Vin}$ do {\it not} give rise in general to open $G$-orbits $G\cdot \O^{G_0}$ in the irreducible components of the Steinberg map $p\colon\fg\to\fg/\!/G\cong \fh/W$.
To make this more precise, we need some notions and results from \cite{MR2111215, MR2504930} and, for simplicity of exposition, we restrict to the case where $\fg$ is semisimple.
\begin{definition} A complex semisimple $\ZZ_m$-graded Lie algebra $\{\fg,\theta,m\}$, the corresponding grading, and the automorphism $\theta$ are called:
\begin{itemize}
\item[(i)] {\it $\mathcal S$-regular} if $\mathcal S_V\cap\fg^{reg}\neq \emptyset$;	
\item[(ii)] {\it $\mathcal N$-regular} if $\mathcal N_V\cap\fg^{reg}\neq \emptyset$;	
\item[(iii)] {\it very $\mathcal N$-regular} if each irreducible component of $\mathcal N_V$ intersects $\fg^{reg}$ non-trivially.
\end{itemize}
\end{definition}
Clearly (iii) implies (ii). 
It is an important result of L.\ V.\ Antonyan and D.\ I.\ Panyushev in \cite{MR2111215}  that if a connected component of $\operatorname{Aut}(\fg)$ contains automorphisms
of order $m$, then it contains a unique {\it $\mathcal N$-regular} automorphism 
of that order (up to conjugation by the group of inner automorphisms of $\fg$).
Moreover, as mentioned in the introduction of \cite{MR2111215}, the condition of $\mathcal S$-regularity is equivalent to $\mathcal N$-regularity in the symmetric case 
$m=2$, but for $m\geq 3$ neither of these
properties implies the other. An example of $\mathcal S$-regular grading that is not $\mathcal N$-regular is given in \cite[Example 4.5]{MR2111215}. Here $\fg$ is of type $E_6$ with the inner automorphism of order $m=4$ described by the Kac diagram 
\begin{equation}
\label{eq:exampleE6}
\begin{picture}(90,50)
  \put(3,0){\circle*{6}}
  \put(23,0){\circle{6}}
  \put(43,0){\circle{6}}
  \put(63,0){\circle{6}}
  \put(83,0){\circle*{6}}
  \put(43,20){\circle*{6}}
  \put(43,40){\circle{6}}
  \put(6,0){\line(1,0){14}}
  \put(26,0){\line(1,0){14}}
  \put(46,0){\line(1,0){14}}
  \put(66,0){\line(1,0){14}}
  \put(43,3){\line(0,1){14}}
  \put(43,23){\line(0,1){14}}
\end{picture}
\end{equation}
This is the affine Dynkin diagram of $\fg$ of type $E_6$, where the white and black nodes correspond to roots subspaces of degree $0$ and $1$, respectively.
The semisimple part of $\fg_0$ is given by the subdiagram consisting
of white nodes and the dimension of the centre of $\fg_0$ is the number of black nodes
minus $1$. We have $G_0\cong SL(4)\times SL(2)\times  (\mathbb C^x)^2$ up to
local isomorphism, acting on $V=\fg_1\cong \mathbb C^4\oplus(\mathbb C^4)^*\oplus(\Lambda^2 \mathbb C^4\boxtimes \mathbb C^2)$. The reader is referred to e.g. \cite[Chapter 3, \S 3]{MR1349140} for a detailed treatment of periodic automorphisms and their associated Kac diagrams. 

Now $G_0$-Jordan classes form a finite partition of $V$, which is irreducible, so there is one class $J$ that is open in $V$.
We call it the $G_0$-regular Jordan class of $V$ and note that it is the unique $G_0$-Jordan class that is dense in the $G_0$-regular sheet $S=V^\bullet$ of $V$. (See Example \ref{ex:elements-general-position} for an explicit description of representatives of the $G_0$-orbits in the $G_0$-regular Jordan class.)
Since the grading \eqref{eq:exampleE6} is $\mathcal S$-regular, we have $\overline{J}^{reg}=V^{reg}=\fg^{reg}\cap V$ in this case.
Let $\O^{G_0}$ be the nilpotent $G_0$-orbit that is open in one of the irreducible components of $\mathcal N_V$.
We have $\O^{G_0}\subset \overline{J}^{\bullet}=V^\bullet$ by 
\cite[Corollaries 1 and 2]{MR0430168}, but $\O^{G_0}\not\subset\overline{J}^{reg}$ since the grading is not $\mathcal N$-regular.

The cone $\mathcal N_V$ is often reducible and a larger class of examples for which $\overline{J}^{reg}\neq\overline{J}^\bullet$ comes from $\mathcal N$-regular gradings that are not very $\mathcal N$-regular:
the $G_0$-regular Jordan class $J$ satisfies $\overline{J}^{reg}=\fg^{reg}\cap V$ and, by an argument as above,
there is a nilpotent $G_0$-orbit contained in $\overline{J}^{\bullet}$ but not in
$\overline{J}^{reg}$. 
Exceptional $\mathcal N$-regular gradings whose nodes are not all black are classified in \cite{MR2765379},
and very $\mathcal N$-regular gradings appear to occur very rarely. Inner exceptional gradings 
with all nodes black are $\mathcal N$-regular but not very $\mathcal N$-regular \cite[Example 4.4]{MR2111215}
and the same is true for the outer grading of $E_6$ with all nodes black (W. A. de Graaf,  05-05-2020, personal communication).
The following result is a consequence of these observations,
and the tables are a specialization of Tables $2$-$7$ of \cite{MR2765379}.
\begin{proposition}\label{prop:chiusura-reg-bullet}
Let $\{\fg,\theta,m\}$ be an exceptional  complex simple $\ZZ_m$-graded Lie algebra, $m\geq 3$. Then $\{\fg,\theta,m\}$
is $\mathcal N$-regular but not very $\mathcal N$-regular if and only if the associated Kac diagram has all the nodes black
or is one in the following tables. In all these cases we have that $\overline{J}^{reg}\subset \overline{J}^{\bullet}$ properly, where 
 $J$ is the $G_0$-regular Jordan class of $V$. 

{\small
\begin{longtable}{|c|c|c|c|c|c|}
\caption{$\mathcal N$-regular but not very $\mathcal N$-regular automorphisms of $G_2$.}
\endfirsthead
\hline
\multicolumn{6}{|l|}{\small\slshape $\mathcal N$-regular but not very $\mathcal N$-regular automorphisms of $G_2$.} \\
\hline 
\endhead
\hline 
\endfoot
\endlastfoot

\hline

$m$ & Kac diagram & \# orbits in $\mathcal N_V$ & \# components of $\mathcal N_V$ & $\dim\mathcal N_V$ & $\operatorname{dim}\fc$ \\
\hline

3 & 
\begin{picture}(70,7)
  \put(5,0){\circle*{6}}
  \put(35,0){\circle*{6}}
  \put(65,0){\circle{6}}
  \put(8,0){\line(1,0){24}}
  \put(35,-3){\line(1,0){30}}
  \put(38,0){\line(1,0){24}}
  \put(35,3){\line(1,0){30}}
  \put(45,-4){\Large $>$}
\end{picture}
& 6 & 2 & 4 & 1\\
\hline
\end{longtable}

\begin{longtable}{|c|c|c|c|c|c|}
\caption{$\mathcal N$-regular but not very $\mathcal N$-regular automorphisms of $F_4$.}
\endfirsthead
\hline
\multicolumn{6}{|l|}{\small\slshape $\mathcal N$-regular but not very $\mathcal N$-regular inner automorphisms of $F_4$.} \\
\hline 
\endhead
\hline 
\endfoot
\endlastfoot

\hline

$m$ & Kac diagram & \# orbits in $\mathcal N_V$ & \# components of $\mathcal N_V$ & $\dim\mathcal N_V$ & $\operatorname{dim}\fc$ \\
\hline

4 & 
\begin{picture}(130,15)
  \put(10,5){\circle*{6}}
  \put(40,5){\circle{6}}
  \put(70,5){\circle*{6}}
  \put(100,5){\circle{6}}
  \put(130,5){\circle{6}}
  \put(13,5){\line(1,0){24}}
  \put(43,5){\line(1,0){24}}
  \put(72,7){\line(1,0){26}}
  \put(72,3){\line(1,0){26}}
  \put(80,1){\Large $>$}
  \put(103,5){\line(1,0){24}}
\end{picture}
& 29 & 3 & 12 & 2 \\

6 & 
\begin{picture}(130,15)
  \put(10,5){\circle*{6}}
  \put(40,5){\circle{6}}
  \put(70,5){\circle*{6}}
  \put(100,5){\circle{6}}
  \put(130,5){\circle*{6}}
  \put(13,5){\line(1,0){24}}
  \put(43,5){\line(1,0){24}}
  \put(72,7){\line(1,0){26}}
  \put(72,3){\line(1,0){26}}
  \put(80,1){\Large $>$}
  \put(103,5){\line(1,0){24}}
\end{picture}
& 35 & 6 & 8 & 2\\

8 & 
\begin{picture}(130,15)
  \put(10,5){\circle*{6}}
  \put(40,5){\circle*{6}}
  \put(70,5){\circle*{6}}
  \put(100,5){\circle{6}}
  \put(130,5){\circle*{6}}
  \put(13,5){\line(1,0){24}}
  \put(43,5){\line(1,0){24}}
  \put(72,7){\line(1,0){26}}
  \put(72,3){\line(1,0){26}}
  \put(80,1){\Large $>$}
  \put(103,5){\line(1,0){24}}
\end{picture}
& 30 & 4 & 6 & 1\\

\hline
\end{longtable}

\begin{longtable}{|c|c|c|c|c|c|}
\caption{$\mathcal N$-regular but not very $\mathcal N$-regular inner automorphisms of $E_6$.}
\endfirsthead
\hline
\multicolumn{6}{|l|}{\small\slshape $\mathcal N$-regular but not very $\mathcal N$-regular inner automorphisms of $E_6$.} \\
\hline 
\endhead
\hline 
\endfoot
\endlastfoot

\hline

$m$ & Kac diagram & \# orbits in $\mathcal N_V$ & \# components of $\mathcal N_V$ & $\dim\mathcal N_V$ & $\operatorname{dim}\fc$ \\
\hline

4 & 
\begin{picture}(90,50)
  \put(3,0){\circle*{6}}
  \put(23,0){\circle{6}}
  \put(43,0){\circle*{6}}
  \put(63,0){\circle{6}}
  \put(83,0){\circle{6}}
  \put(43,20){\circle{6}}
  \put(43,40){\circle{6}}
  \put(6,0){\line(1,0){14}}
  \put(26,0){\line(1,0){14}}
  \put(46,0){\line(1,0){14}}
  \put(66,0){\line(1,0){14}}
  \put(43,3){\line(0,1){14}}
  \put(43,23){\line(0,1){14}}
\end{picture}
& 43 & 3 & 18 & 2\\

6 & 
\begin{picture}(90,50)
  \put(3,0){\circle*{6}}
  \put(23,0){\circle{6}}
  \put(43,0){\circle*{6}}
  \put(63,0){\circle{6}}
  \put(83,0){\circle*{6}}
  \put(43,20){\circle{6}}
  \put(43,40){\circle*{6}}
  \put(6,0){\line(1,0){14}}
  \put(26,0){\line(1,0){14}}
  \put(46,0){\line(1,0){14}}
  \put(66,0){\line(1,0){14}}
  \put(43,3){\line(0,1){14}}
  \put(43,23){\line(0,1){14}}
\end{picture}
& 133 & 9 & 12 & 2

\\

8 & 
\begin{picture}(90,50)
  \put(3,0){\circle*{6}}
  \put(23,0){\circle{6}}
  \put(43,0){\circle*{6}}
  \put(63,0){\circle*{6}}
  \put(83,0){\circle*{6}}
  \put(43,20){\circle{6}}
  \put(43,40){\circle*{6}}
  \put(6,0){\line(1,0){14}}
  \put(26,0){\line(1,0){14}}
  \put(46,0){\line(1,0){14}}
  \put(66,0){\line(1,0){14}}
  \put(43,3){\line(0,1){14}}
  \put(43,23){\line(0,1){14}}
\end{picture}
& 70 & 4 & 9 & 1\\

9 & 
\begin{picture}(90,50)
  \put(3,0){\circle*{6}}
  \put(23,0){\circle*{6}}
  \put(43,0){\circle{6}}
  \put(63,0){\circle*{6}}
  \put(83,0){\circle*{6}}
  \put(43,20){\circle*{6}}
  \put(43,40){\circle*{6}}
  \put(6,0){\line(1,0){14}}
  \put(26,0){\line(1,0){14}}
  \put(46,0){\line(1,0){14}}
  \put(66,0){\line(1,0){14}}
  \put(43,3){\line(0,1){14}}
  \put(43,23){\line(0,1){14}}
\end{picture}
& 118 & 6 & 8 & 1

\\

\hline
\end{longtable}

\begin{longtable}{|c|c|c|c|c|c|}
\caption{$\mathcal N$-regular but not very $\mathcal N$-regular outer automorphisms of $E_6$.}
\endfirsthead
\hline
\multicolumn{6}{|l|}{\small\slshape $\mathcal N$-regular but not very $\mathcal N$-regular outer automorphisms of $E_6$.} \\
\hline 
\endhead
\hline 
\endfoot
\endlastfoot

\hline

$m$ & Kac diagram & \# orbits in $\mathcal N_V$ & \# components of $\mathcal N_V$ & $\dim\mathcal N_V$ & $\operatorname{dim}\fc$ \\
\hline

6 & 
\begin{picture}(130,15)
  \put(10,5){\circle*{6}}
  \put(40,5){\circle{6}}
  \put(70,5){\circle{6}}
  \put(100,5){\circle*{6}}
  \put(130,5){\circle{6}}
  \put(13,5){\line(1,0){24}}
  \put(43,5){\line(1,0){24}}
  \put(72,7){\line(1,0){26}}
  \put(72,3){\line(1,0){26}}
  \put(80,1){\Large $<$}
  \put(103,5){\line(1,0){24}}
\end{picture}
& 34 & 5 & 12 & 3 \\

8 & 
\begin{picture}(130,15)
  \put(10,5){\circle*{6}}
  \put(40,5){\circle{6}}
  \put(70,5){\circle{6}}
  \put(100,5){\circle*{6}}
  \put(130,5){\circle*{6}}
  \put(13,5){\line(1,0){24}}
  \put(43,5){\line(1,0){24}}
  \put(72,7){\line(1,0){26}}
  \put(72,3){\line(1,0){26}}
  \put(80,1){\Large $<$}
  \put(103,5){\line(1,0){24}}
\end{picture}
& 22 & 3 & 9 & 1\\

10 & 
\begin{picture}(130,15)
  \put(10,5){\circle*{6}}
  \put(40,5){\circle*{6}}
  \put(70,5){\circle{6}}
  \put(100,5){\circle*{6}}
  \put(130,5){\circle{6}}
  \put(13,5){\line(1,0){24}}
  \put(43,5){\line(1,0){24}}
  \put(72,7){\line(1,0){26}}
  \put(72,3){\line(1,0){26}}
  \put(80,1){\Large $<$}
  \put(103,5){\line(1,0){24}}
\end{picture}
& 25 & 2 & 8 & 1\\

12 & 
\begin{picture}(130,15)
  \put(10,5){\circle*{6}}
  \put(40,5){\circle*{6}}
  \put(70,5){\circle{6}}
  \put(100,5){\circle*{6}}
  \put(130,5){\circle*{6}}
  \put(13,5){\line(1,0){24}}
  \put(43,5){\line(1,0){24}}
  \put(72,7){\line(1,0){26}}
  \put(72,3){\line(1,0){26}}
  \put(80,1){\Large $<$}
  \put(103,5){\line(1,0){24}}
\end{picture}
& 30 & 4 & 6 & 1\\

\hline
\end{longtable}

\begin{longtable}{|c|c|c|c|c|c|}
\caption{$\mathcal N$-regular but not very $\mathcal N$-regular automorphisms of $E_7$.}
\endfirsthead
\hline
\multicolumn{6}{|l|}{\small\slshape $\mathcal N$-regular but not very $\mathcal N$-regular automorphisms of $E_7$.} \\
\hline 
\endhead
\hline 
\endfoot
\endlastfoot

\hline

$m$ & Kac diagram & \# orbits in $\mathcal N_V$ & \# components of $\mathcal N_V$ & $\dim\mathcal N_V$ & $\operatorname{dim}\fc$ \\
\hline

6 &
\begin{picture}(140,40)
  \put(3,0){\circle*{6}}
  \put(23,0){\circle{6}}
  \put(43,0){\circle{6}}
  \put(63,0){\circle*{6}}
  \put(83,0){\circle{6}}
  \put(63,20){\circle{6}}
  \put(6,0){\line(1,0){14}}
  \put(26,0){\line(1,0){14}}
  \put(46,0){\line(1,0){14}}
  \put(66,0){\line(1,0){14}}
  \put(63,3){\line(0,1){14}}
\put(103,0){\circle{6}}
\put(86,0){\line(1,0){14}}
\put(123,0){\circle*{6}}
\put(106,0){\line(1,0){14}}
\end{picture} 
& 233 & 10 & 21 & 3\\

7 &
\begin{picture}(140,40)
  \put(3,0){\circle*{6}}
  \put(23,0){\circle{6}}
  \put(43,0){\circle{6}}
  \put(63,0){\circle*{6}}
  \put(83,0){\circle{6}}
  \put(63,20){\circle{6}}
  \put(6,0){\line(1,0){14}}
  \put(26,0){\line(1,0){14}}
  \put(46,0){\line(1,0){14}}
  \put(66,0){\line(1,0){14}}
  \put(63,3){\line(0,1){14}}
\put(103,0){\circle*{6}}
\put(86,0){\line(1,0){14}}
\put(123,0){\circle{6}}
\put(106,0){\line(1,0){14}}
\end{picture} 
& 112 & 3 & 18 & 1\\

8 &
\begin{picture}(140,40)
  \put(3,0){\circle*{6}}
  \put(23,0){\circle{6}}
  \put(43,0){\circle{6}}
  \put(63,0){\circle*{6}}
  \put(83,0){\circle{6}}
  \put(63,20){\circle{6}}
  \put(6,0){\line(1,0){14}}
  \put(26,0){\line(1,0){14}}
  \put(46,0){\line(1,0){14}}
  \put(66,0){\line(1,0){14}}
  \put(63,3){\line(0,1){14}}
\put(103,0){\circle*{6}}
\put(86,0){\line(1,0){14}}
\put(123,0){\circle*{6}}
\put(106,0){\line(1,0){14}}
\end{picture} 
& 163 & 2 & 17 & 1\\

9 &
\begin{picture}(140,40)
  \put(3,0){\circle*{6}}
  \put(23,0){\circle*{6}}
  \put(43,0){\circle{6}}
  \put(63,0){\circle*{6}}
  \put(83,0){\circle{6}}
  \put(63,20){\circle{6}}
  \put(6,0){\line(1,0){14}}
  \put(26,0){\line(1,0){14}}
  \put(46,0){\line(1,0){14}}
  \put(66,0){\line(1,0){14}}
  \put(63,3){\line(0,1){14}}
\put(103,0){\circle*{6}}
\put(86,0){\line(1,0){14}}
\put(123,0){\circle{6}}
\put(106,0){\line(1,0){14}}
\end{picture}
& 132 & 4 & 14 & 1
\\

10 &
\begin{picture}(140,40)
  \put(3,0){\circle*{6}}
  \put(23,0){\circle{6}}
  \put(43,0){\circle*{6}}
  \put(63,0){\circle{6}}
  \put(83,0){\circle*{6}}
  \put(63,20){\circle*{6}}
  \put(6,0){\line(1,0){14}}
  \put(26,0){\line(1,0){14}}
  \put(46,0){\line(1,0){14}}
  \put(66,0){\line(1,0){14}}
  \put(63,3){\line(0,1){14}}
\put(103,0){\circle{6}}
\put(86,0){\line(1,0){14}}
\put(123,0){\circle*{6}}
\put(106,0){\line(1,0){14}}
\end{picture} 
& 199 & 4 & 13 & 1\\

12 &
\begin{picture}(140,40)
  \put(3,0){\circle*{6}}
  \put(23,0){\circle{6}}
  \put(43,0){\circle*{6}}
  \put(63,0){\circle{6}}
  \put(83,0){\circle*{6}}
  \put(63,20){\circle*{6}}
  \put(6,0){\line(1,0){14}}
  \put(26,0){\line(1,0){14}}
  \put(46,0){\line(1,0){14}}
  \put(66,0){\line(1,0){14}}
  \put(63,3){\line(0,1){14}}
\put(103,0){\circle*{6}}
\put(86,0){\line(1,0){14}}
\put(123,0){\circle*{6}}
\put(106,0){\line(1,0){14}}
\end{picture} 
& 217 & 5 & 11 & 1\\

14 &
\begin{picture}(140,40)
  \put(3,0){\circle*{6}}
  \put(23,0){\circle*{6}}
  \put(43,0){\circle*{6}}
  \put(63,0){\circle{6}}
  \put(83,0){\circle*{6}}
  \put(63,20){\circle*{6}}
  \put(6,0){\line(1,0){14}}
  \put(26,0){\line(1,0){14}}
  \put(46,0){\line(1,0){14}}
  \put(66,0){\line(1,0){14}}
  \put(63,3){\line(0,1){14}}
\put(103,0){\circle*{6}}
\put(86,0){\line(1,0){14}}
\put(123,0){\circle*{6}}
\put(106,0){\line(1,0){14}}
\end{picture} 
& 238 & 7 & 9 & 1\\

\hline
\end{longtable}

\begin{longtable}{|c|c|c|c|c|c|}
\caption{$\mathcal N$-regular but not very $\mathcal N$-regular automorphisms of $E_8$.}
\endfirsthead
\hline
\multicolumn{6}{|l|}{\small\slshape $\mathcal N$-regular but not very $\mathcal N$-regular automorphisms of $E_8$.} \\
\hline 
\endhead
\hline 
\endfoot
\endlastfoot

\hline

$m$ & Kac diagram & \# orbits in $\mathcal N_V$ & \# components of $\mathcal N_V$ & $\dim\mathcal N_V$ & $\operatorname{dim}\fc$ \\
\hline

4 &
\begin{picture}(150,40)
  \put(3,0){\circle{6}}
  \put(23,0){\circle{6}}
  \put(43,0){\circle{6}}
  \put(63,0){\circle{6}}
  \put(83,0){\circle*{6}}
  \put(43,20){\circle{6}}
  \put(6,0){\line(1,0){14}}
  \put(26,0){\line(1,0){14}}
  \put(46,0){\line(1,0){14}}
  \put(66,0){\line(1,0){14}}
  \put(43,3){\line(0,1){14}}
\put(103,0){\circle{6}}
\put(86,0){\line(1,0){14}}
\put(123,0){\circle{6}}
\put(106,0){\line(1,0){14}}
\put(143,0){\circle{6}}
\put(126,0){\line(1,0){14}}
\end{picture} 
& 144 & 2 & 60 & 4\\

6 &
\begin{picture}(150,40)
  \put(3,0){\circle{6}}
  \put(23,0){\circle{6}}
  \put(43,0){\circle{6}}
  \put(63,0){\circle*{6}}
  \put(83,0){\circle{6}}
  \put(43,20){\circle{6}}
  \put(6,0){\line(1,0){14}}
  \put(26,0){\line(1,0){14}}
  \put(46,0){\line(1,0){14}}
  \put(66,0){\line(1,0){14}}
  \put(43,3){\line(0,1){14}}
\put(103,0){\circle{6}}
\put(86,0){\line(1,0){14}}
\put(123,0){\circle{6}}
\put(106,0){\line(1,0){14}}
\put(143,0){\circle*{6}}
\put(126,0){\line(1,0){14}}
\end{picture} 
& 270 & 7 & 40 & 4\\

8 &
\begin{picture}(150,40)
  \put(3,0){\circle{6}}
  \put(23,0){\circle{6}}
  \put(43,0){\circle*{6}}
  \put(63,0){\circle{6}}
  \put(83,0){\circle{6}}
  \put(43,20){\circle{6}}
  \put(6,0){\line(1,0){14}}
  \put(26,0){\line(1,0){14}}
  \put(46,0){\line(1,0){14}}
  \put(66,0){\line(1,0){14}}
  \put(43,3){\line(0,1){14}}
\put(103,0){\circle{6}}
\put(86,0){\line(1,0){14}}
\put(123,0){\circle*{6}}
\put(106,0){\line(1,0){14}}
\put(143,0){\circle{6}}
\put(126,0){\line(1,0){14}}
\end{picture} 
& 219 & 2 & 30 & 2\\

9 &
\begin{picture}(150,40)
  \put(3,0){\circle{6}}
  \put(23,0){\circle{6}}
  \put(43,0){\circle*{6}}
  \put(63,0){\circle{6}}
  \put(83,0){\circle{6}}
  \put(43,20){\circle{6}}
  \put(6,0){\line(1,0){14}}
  \put(26,0){\line(1,0){14}}
  \put(46,0){\line(1,0){14}}
  \put(66,0){\line(1,0){14}}
  \put(43,3){\line(0,1){14}}
\put(103,0){\circle{6}}
\put(86,0){\line(1,0){14}}
\put(123,0){\circle*{6}}
\put(106,0){\line(1,0){14}}
\put(143,0){\circle*{6}}
\put(126,0){\line(1,0){14}}
\end{picture} 
& 206 & 2 & 28 & 1\\

10 &
\begin{picture}(150,40)
  \put(3,0){\circle{6}}
  \put(23,0){\circle{6}}
  \put(43,0){\circle*{6}}
  \put(63,0){\circle{6}}
  \put(83,0){\circle{6}}
  \put(43,20){\circle{6}}
  \put(6,0){\line(1,0){14}}
  \put(26,0){\line(1,0){14}}
  \put(46,0){\line(1,0){14}}
  \put(66,0){\line(1,0){14}}
  \put(43,3){\line(0,1){14}}
\put(103,0){\circle*{6}}
\put(86,0){\line(1,0){14}}
\put(123,0){\circle{6}}
\put(106,0){\line(1,0){14}}
\put(143,0){\circle*{6}}
\put(126,0){\line(1,0){14}}
\end{picture} 
& 300 & 7 & 24 & 2\\

12 &
\begin{picture}(150,40)
  \put(3,0){\circle*{6}}
  \put(23,0){\circle{6}}
  \put(43,0){\circle*{6}}
  \put(63,0){\circle{6}}
  \put(83,0){\circle{6}}
  \put(43,20){\circle{6}}
  \put(6,0){\line(1,0){14}}
  \put(26,0){\line(1,0){14}}
  \put(46,0){\line(1,0){14}}
  \put(66,0){\line(1,0){14}}
  \put(43,3){\line(0,1){14}}
\put(103,0){\circle*{6}}
\put(86,0){\line(1,0){14}}
\put(123,0){\circle{6}}
\put(106,0){\line(1,0){14}}
\put(143,0){\circle*{6}}
\put(126,0){\line(1,0){14}}
\end{picture} 
& 398 & 10 & 20 & 2\\

14 &
\begin{picture}(150,40)
  \put(3,0){\circle*{6}}
  \put(23,0){\circle{6}}
  \put(43,0){\circle*{6}}
  \put(63,0){\circle{6}}
  \put(83,0){\circle{6}}
  \put(43,20){\circle{6}}
  \put(6,0){\line(1,0){14}}
  \put(26,0){\line(1,0){14}}
  \put(46,0){\line(1,0){14}}
  \put(66,0){\line(1,0){14}}
  \put(43,3){\line(0,1){14}}
\put(103,0){\circle*{6}}
\put(86,0){\line(1,0){14}}
\put(123,0){\circle*{6}}
\put(106,0){\line(1,0){14}}
\put(143,0){\circle*{6}}
\put(126,0){\line(1,0){14}}
\end{picture} 
& 333 & 4 & 18 & 1\\

15 &
\begin{picture}(150,40)
  \put(3,0){\circle*{6}}
  \put(23,0){\circle{6}}
  \put(43,0){\circle*{6}}
  \put(63,0){\circle{6}}
  \put(83,0){\circle*{6}}
  \put(43,20){\circle{6}}
  \put(6,0){\line(1,0){14}}
  \put(26,0){\line(1,0){14}}
  \put(46,0){\line(1,0){14}}
  \put(66,0){\line(1,0){14}}
  \put(43,3){\line(0,1){14}}
\put(103,0){\circle{6}}
\put(86,0){\line(1,0){14}}
\put(123,0){\circle*{6}}
\put(106,0){\line(1,0){14}}
\put(143,0){\circle*{6}}
\put(126,0){\line(1,0){14}}
\end{picture} 
& 354 & 5 & 16 & 1\\

18 &
\begin{picture}(150,40)
  \put(3,0){\circle*{6}}
  \put(23,0){\circle*{6}}
  \put(43,0){\circle{6}}
  \put(63,0){\circle*{6}}
  \put(83,0){\circle{6}}
  \put(43,20){\circle*{6}}
  \put(6,0){\line(1,0){14}}
  \put(26,0){\line(1,0){14}}
  \put(46,0){\line(1,0){14}}
  \put(66,0){\line(1,0){14}}
  \put(43,3){\line(0,1){14}}
\put(103,0){\circle*{6}}
\put(86,0){\line(1,0){14}}
\put(123,0){\circle{6}}
\put(106,0){\line(1,0){14}}
\put(143,0){\circle*{6}}
\put(126,0){\line(1,0){14}}
\end{picture} 
& 397 & 5 & 14 & 1\\

20 &
\begin{picture}(150,40)
  \put(3,0){\circle*{6}}
  \put(23,0){\circle*{6}}
  \put(43,0){\circle{6}}
  \put(63,0){\circle*{6}}
  \put(83,0){\circle{6}}
  \put(43,20){\circle*{6}}
  \put(6,0){\line(1,0){14}}
  \put(26,0){\line(1,0){14}}
  \put(46,0){\line(1,0){14}}
  \put(66,0){\line(1,0){14}}
  \put(43,3){\line(0,1){14}}
\put(103,0){\circle*{6}}
\put(86,0){\line(1,0){14}}
\put(123,0){\circle*{6}}
\put(106,0){\line(1,0){14}}
\put(143,0){\circle*{6}}
\put(126,0){\line(1,0){14}}
\end{picture} 
& 438 & 7 & 12 & 1\\

24 &
\begin{picture}(150,40)
  \put(3,0){\circle*{6}}
  \put(23,0){\circle*{6}}
  \put(43,0){\circle{6}}
  \put(63,0){\circle*{6}}
  \put(83,0){\circle*{6}}
  \put(43,20){\circle*{6}}
  \put(6,0){\line(1,0){14}}
  \put(26,0){\line(1,0){14}}
  \put(46,0){\line(1,0){14}}
  \put(66,0){\line(1,0){14}}
  \put(43,3){\line(0,1){14}}
\put(103,0){\circle*{6}}
\put(86,0){\line(1,0){14}}
\put(123,0){\circle*{6}}
\put(106,0){\line(1,0){14}}
\put(143,0){\circle*{6}}
\put(126,0){\line(1,0){14}}
\end{picture} 
& 478 & 8 & 10 & 1\\

\hline
\end{longtable}
}
\end{proposition}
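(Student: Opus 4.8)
The plan is to split the statement into two essentially independent parts: the geometric consequence $\overline{J}^{reg}\subsetneq\overline{J}^\bullet$, which holds for \emph{every} $\mathcal N$-regular but not very $\mathcal N$-regular grading, and the combinatorial classification, which is a finite check resting on the enumeration of $\mathcal N$-regular gradings in \cite{MR2111215,MR2765379}.

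First I would dispose of the geometric consequence, which is already implicit in the discussion preceding the statement. Since $J$ is the $G_0$-regular Jordan class it is open and dense in $V$, so $\overline{J}=V$, $\overline{J}^{reg}=V^{reg}$, $\overline{J}^\bullet=V^\bullet$, and $\overline{J}^{reg}\subset\overline{J}^\bullet$ by Lemma~\ref{lemma:regvsbullet}. If $\{\fg,\theta\}$ is $\mathcal N$-regular then $\mathcal N_V\cap\fg^{reg}\neq\emptyset$, so $\fg^{reg}\cap V\neq\emptyset$, whence $\min_{y\in V}\dim\fg^y=\rk\fg$ and $V^{reg}=\fg^{reg}\cap V$. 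If moreover $\{\fg,\theta\}$ is not very $\mathcal N$-regular, some irreducible component $C$ of $\mathcal N_V$ is disjoint from $\fg^{reg}$; by \cite[Corollaries~1 and~2]{MR0430168} the cone $\mathcal N_V$ is equidimensional of dimension $\dim V-\dim\fc$, so the dense $G_0$-orbit $\Oc$ of $C$ has dimension $\dim V-\dim\fc$ and therefore $\Oc\subset V^\bullet=\overline{J}^\bullet$, while $\Oc\subset C$ forces $\Oc\cap\fg^{reg}=\emptyset$ and hence $\Oc\not\subset\fg^{reg}\cap V=\overline{J}^{reg}$. This yields the strict inclusion and settles the last sentence of the statement for every grading satisfying the hypothesis.

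For the classification I would use that $\mathcal N$-regular automorphisms form a finite explicit list. By the theorem of Antonyan and Panyushev \cite{MR2111215}, each connected component of $\operatorname{Aut}(\fg)$ containing order-$m$ elements contains a unique $\mathcal N$-regular automorphism of order $m$ up to inner conjugation; for exceptional $\fg$ and $m\geq3$ the $\mathcal N$-regular gradings whose Kac diagram is not all-black are then precisely those recorded in Tables~2--7 of \cite{MR2765379}, where the nilpotent $G_0$-orbits of $\mathcal N_V$ are listed together with the $G$-orbit of $\fg$ containing each. The key observation is that a component $C$ of $\mathcal N_V$ meets $\fg^{reg}$ if and only if its dense $G_0$-orbit lies in the regular nilpotent $G$-orbit (since $\fg^{reg}$ is open and $G_0$-stable and the dense orbit is open in $C$); consequently $\{\fg,\theta\}$ is very $\mathcal N$-regular exactly when every component has regular dense orbit, and it is $\mathcal N$-regular but not very $\mathcal N$-regular exactly when at least one dense orbit is regular while at least one is not. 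In particular, when $\mathcal N_V$ is irreducible $\mathcal N$-regularity already forces very $\mathcal N$-regularity, so every tabulated grading has at least two components. Applying this criterion to the data of \cite{MR2765379} singles out precisely the gradings in our tables. The all-black case is treated separately: such gradings are $\mathcal N$-regular because the principal nilpotent $\sum_i e_{\alpha_i}$ belongs to $V=\fg_1$, and they are not very $\mathcal N$-regular by \cite[Example~4.4]{MR2111215} in the inner case and by the computation of W.~A.~de Graaf quoted above for the outer grading of $E_6$.

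The hard part will be the second step: for each $\mathcal N$-regular grading one must decide whether every irreducible component of $\mathcal N_V$ contains a regular nilpotent element, equivalently whether the dense orbit of each component lies in the principal nilpotent $G$-orbit. This is bookkeeping rather than conceptual work, but it depends essentially on the completeness of the orbit classification of \cite{MR2765379} and on the uniqueness statement of \cite{MR2111215}, which together guarantee that no $\mathcal N$-regular grading possessing a non-regular component is omitted. Once the dense orbit of each component is matched with its $G$-orbit, selecting the not-very-$\mathcal N$-regular gradings and assembling the tables is routine.
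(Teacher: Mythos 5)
Your proposal is correct and follows essentially the same route as the paper: the strict inclusion $\overline{J}^{reg}\subsetneq\overline{J}^{\bullet}$ is derived exactly as in the discussion preceding the proposition (the $G_0$-regular class $J$ is open dense, $\mathcal N$-regularity gives $\overline{J}^{reg}=V^{reg}=\fg^{reg}\cap V$, and the dense $G_0$-orbit of a component of $\mathcal N_V$ missing $\fg^{reg}$ lies in $V^{\bullet}=\overline{J}^{\bullet}$ by Vinberg's equidimensionality of the nilpotent cone), while the classification is likewise obtained by specializing Tables 2--7 of \cite{MR2765379} and invoking \cite[Example 4.4]{MR2111215} together with de Graaf's computation for the all-black diagrams. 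Your explicit selection criterion (a component meets $\fg^{reg}$ if and only if its dense $G_0$-orbit is the regular nilpotent $G$-orbit) merely spells out the bookkeeping the paper leaves implicit in the phrase ``specialization of Tables 2--7.''
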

\section{Slice-induction and parametrization of orbits and classes}
\label{sec:slice-parametrization}
\subsection{Slice-induction}\label{sec:slice}

Theorem \ref{prop:local_holds} shows that the closure of a $G_0$-Jordan class in $V$ is a union of $G_0$-Jordan classes, generalising  results of \cite{BK,BH}. We aim at detecting which $G_0$-Jordan classes lie in the closure of a given one. In the classical case, this can be described in terms of Lusztig-Spaltenstein's parabolic induction of adjoint orbits, \cite{LS,Bo}. Slice induction is introduced in \cite{BH} to deal with the $m=2$ case, since orbit induction no longer works. We will briefly show how to extend to the case of general $m$ the construction in \cite{BH}, by combining some of its general arguments with our local approach. 

Let $\fm$ be now a $\theta$-stable reductive subalgebra of $\fg$ and $M$ the connected subgroup of $G$ with ${\rm Lie}(M)=\fm$. For a nilpotent element  $e\in \fm_1$ we consider a graded $\mathfrak{sl}(2)$-triple $\{e,h, f\}$ in $\fm$, so that $h\in\fm_0$ and $f\in\fm_{-1}$, and the corresponding Slodowy slice $S_{\fm,e}=e+\fm^f\subset\fm$. Since $\fm^f$ is homogeneous, we can consider its intersection with $V$, obtaining $S_{\fm_1,e}=e+\fm^f_1\subset\fm_1$. If $e=0$, we consider the trivial triple as an $\mathfrak{sl}(2)$-triple, so $S_{\fm_1,0}=\fm_1$. 
We start with two preliminary results in the case $\fm=\fg$.

\begin{lemma}\label{lem:transverse}
Let $\{e,h, f\}$ be a  graded  $\mathfrak{sl}(2)$-triple in $\fg$ and  let $X\subset V$ be an irreducible locally closed $G_0$-stable  subset such that $X\cap S_{\fg_1,e}\neq\emptyset$.
Then the action morphism $\psi\colon G_0\times S_{\fg_1,e}\to V$ is smooth, its restriction 
$\psi_X\colon G_0\times (S_{\fg_1,e}\cap X)\to X$ is smooth and dominant, more precisely $\psi_X(G_0\times C)$ is dense in $X$ for any irreducible component $C$ of  $S_{\fg_1,e}\cap X$. 
\end{lemma}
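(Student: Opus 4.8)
The plan is to prove that $\psi$ is smooth by showing that its differential is surjective at every point, to transfer this to $\psi_X$ by a base-change argument, and then to extract dominance and the density statement from the openness of smooth morphisms.

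First I would record the infinitesimal transversality of the slice at $e$. The differential of $\psi$ at a point $(1,x)$ sends $(\xi,v)\in\fg_0\oplus\fg^f_1$ to $[\xi,x]+v$, so its image is $[\fg_0,x]+\fg^f_1$; moreover, by left $G_0$-equivariance of $\psi$, surjectivity of $d\psi$ at an arbitrary $(g,x)$ is equivalent to surjectivity at $(1,x)$. Representation theory of the $\mathfrak{sl}(2)$-triple $\{e,h,f\}$ yields the $\theta$-stable decomposition $\fg=[\fg,e]\oplus\fg^f$, whose degree-$1$ component reads $\fg_1=[\fg_0,e]\oplus\fg^f_1$. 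Hence $d\psi_{(1,e)}$ is surjective: the slice is transverse to $\Oc^{G_0}_e$ at $e$.

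The hard part is to propagate this transversality from $e$ to every point of $S_{\fg_1,e}$, and for this I would use the contracting $\CC^\times$-action attached to the triple. Let $\gamma\colon\CC^\times\to G_0$ be the one-parameter subgroup associated with the neutral element $h$ (it lands in $G_0$ since $h\in\fg_0$, so the resulting action preserves the grading) and set $t\star v=t^2\gamma(t^{-1})\cdot v$ for $v\in V$. Because $\fg^f$ consists of lowest-weight vectors for $\ad h$ and hence lies in non-positive $\ad h$-eigenspaces, $\star$ fixes $e$ and contracts $S_{\fg_1,e}=e+\fg^f_1$ to $e$ as $t\to0$; it preserves $V$ and $S_{\fg_1,e}$ since $h\in\fg_0$ and $\fg^f$ is homogeneous. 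Letting $\CC^\times$ act on $G_0\times S_{\fg_1,e}$ by $t\cdot(g,x)=(\gamma(t^{-1})g\gamma(t),\,t\star x)$ makes $\psi$ equivariant, so $d\psi_{(1,x)}$ is surjective if and only if $d\psi_{(1,t\star x)}$ is. The locus of $x\in S_{\fg_1,e}$ where $d\psi_{(1,x)}$ is surjective is therefore Zariski open, $\CC^\times$-stable and contains $e$; as the $\star$-orbit closure of every $x$ meets $e$, this locus is all of $S_{\fg_1,e}$. Combined with the reduction of the previous paragraph, $d\psi$ is surjective everywhere, and since source and target are smooth, $\psi$ is a smooth morphism.

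Finally I would descend to $X$. As $X$ is $G_0$-stable, $\psi(g,x)\in X$ forces $x=g^{-1}\cdot\psi(g,x)\in X$, so $\psi^{-1}(X)=G_0\times(S_{\fg_1,e}\cap X)$; thus $\psi_X$ is the base change of the smooth morphism $\psi$ along the locally closed immersion $X\hookrightarrow V$, hence again smooth. A smooth morphism is flat and of finite type, hence open. Openness gives dominance: $\psi_X\big(G_0\times(S_{\fg_1,e}\cap X)\big)$ is a non-empty open, hence dense, subset of the irreducible variety $X$. For the last assertion, fix an irreducible component $C$ of $S_{\fg_1,e}\cap X$ and let $W=C\setminus\bigcup_{C'\neq C}C'$, a non-empty open subset of $S_{\fg_1,e}\cap X$ contained in $C$. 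Then $G_0\times W$ is open in $G_0\times(S_{\fg_1,e}\cap X)$, so $\psi_X(G_0\times W)\subset\psi_X(G_0\times C)$ is a non-empty open, hence dense, subset of $X$, whence $\psi_X(G_0\times C)$ is dense in $X$. The main obstacle is the middle step: infinitesimal transversality is immediate only at $e$, and upgrading it to smoothness along the whole slice is exactly what the equivariance under the contracting $\CC^\times$-action delivers, while the rest is formal.
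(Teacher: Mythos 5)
Your proof is correct, and its core follows the paper's own argument: surjectivity of $d\psi$ at $(1,e)$ from the $\theta$-stable $\fsl(2)$-decomposition $\fg=[\fg,e]\oplus\fg^f$ read in degree one, propagation of surjectivity to all of $S_{\fg_1,e}$ via the contracting $\CC^*$-action attached to the graded triple (the paper only cites Slodowy's argument here, while you spell it out, including the two points that make it work in the graded setting: $\gamma$ lands in $G_0$ because $h\in\fg_0$, and the $\ad h$-weights on $\fg^f_1$ are nonpositive), and smoothness of $\psi_X$ by restriction (you argue by base change along $X\hookrightarrow V$ using $G_0$-stability of $X$, where the paper invokes a lemma of Slodowy on equivariant morphisms -- same content). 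Where you genuinely diverge is the final density claim. The paper compares two dimension estimates: $\dim C\geq \dim X+\dim S_{\fg_1,e}-\dim V$ for an irreducible component of the intersection, against $\dim(\overline{G_0\cdot C}^{X})\geq \dim G_0+\dim C-\dim F$ coming from the fiber dimension $\dim F=\dim\fg_0+\dim S_{\fg_1,e}-\dim V$ of the smooth map $\psi$, and concludes $\dim \overline{G_0\cdot C}^{X}\geq\dim X$. You instead use that smooth morphisms are open: deleting the other components from $C$ gives a non-empty open subset $W$ of $S_{\fg_1,e}\cap X$, and $\psi_X(G_0\times W)$ is then a non-empty open, hence dense, subset of the irreducible variety $X$ contained in $\psi_X(G_0\times C)$. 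Your route is shorter and avoids the intersection-dimension estimate entirely; the paper's count has the side benefit of recording the constant fiber dimension of $\psi$, but for the statement of the lemma the openness argument suffices, so both proofs are complete.
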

\begin{proof}
For $m=2$, this is part of \cite[Proposition 2.4 (i)]{BH}, we record the proof for completeness. The action morphism $\psi$ is $G_0$-equivariant with smooth domain and codomain, hence it suffices to verify that the differential is surjective at any point of the form $(1,y)\in G_0\times S_{\fg_1,e}$. 
We note that
\begin{equation*}
\begin{aligned}
d\psi|_{(1,y)}&:\fg_0\times \fg_1^f\to V\\
&(x,z)\to [x,y]+z
\end{aligned}
\end{equation*}
and by  $\mathfrak{sl}(2)$-representation theory  ${\fg}=[{\fg},e]\oplus\fg^f$, which in degree $1$ becomes
$V=[{\fg}_0,e]\oplus\fg_1^f$, so the differential at $(1,e)$ is surjective. The contracting ${\mathbb C}^*$-action argument in \cite[7.4, Corollary 1]{Slo}  carries over to the $\ZZ_m$-graded case because $\{e,h, f\}$ is a  graded  $\mathfrak{sl}(2)$-triple and $h\in\fm_0$, so $\psi$ is smooth at any point $(1,y)$, hence everywhere.

By \cite[III.5, Lemma 2]{Slo} applied to the $G_0$-equivariant morphism given by the inclusion of $X$ in $V$, the restriction $\psi_X$ is again smooth, whence open. Since $X$ is irreducible, the image of any irreducible component 
of  $G_0\times (S_{\fg_1,e}\cap X)$ is dense in $X$.
\end{proof}

\begin{lemma}\label{lem:nilpotent}
Let $J$ be a $G_0$-Jordan class in $V$ and $e\in \mathcal N_V$. Then $e\in \overline{J}$ if and only if $J\cap S_{\fg_1,e}\neq\emptyset$ if and only if $\overline{J}\cap S_{\fg_1,e}\neq\emptyset$.
\end{lemma}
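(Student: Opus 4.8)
The plan is to argue the equivalence of the three conditions $(A)$ $e\in\overline J$, $(B)$ $J\cap S_{\fg_1,e}\neq\emptyset$ and $(C)$ $\overline J\cap S_{\fg_1,e}\neq\emptyset$ by closing the cycle $(A)\Rightarrow(C)\Rightarrow(B)\Rightarrow(A)$. Two implications are immediate: $(B)\Rightarrow(C)$ holds since $J\subset\overline J$; and because $0\in\fg^f_1$ we have $e=e+0\in S_{\fg_1,e}$, so if $e\in\overline J$ then $e\in\overline J\cap S_{\fg_1,e}$, which is $(A)\Rightarrow(C)$. The content therefore lies in $(C)\Rightarrow(B)$ and, above all, in $(B)\Rightarrow(A)$.

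For $(C)\Rightarrow(B)$ I would exploit the first assertion of Lemma \ref{lem:transverse}: the action morphism $\psi\colon G_0\times S_{\fg_1,e}\to V$ is smooth, hence open, so its image $\Omega:=G_0\cdot S_{\fg_1,e}$ is Zariski open in $V$ and contains $S_{\fg_1,e}$. Pick $w\in\overline J\cap S_{\fg_1,e}\subset\Omega$. As $J$ is locally closed, it is dense in $\overline J$, so it meets the nonempty open subset $\Omega\cap\overline J$ of $\overline J$; choosing $p\in J\cap\Omega$ and writing $p=g\cdot s$ with $g\in G_0$ and $s\in S_{\fg_1,e}$, the $G_0$-stability of $J$ gives $s=g^{-1}\cdot p\in J\cap S_{\fg_1,e}$, which is $(B)$.

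The key step is $(B)\Rightarrow(A)$, for which I would combine the Slodowy contraction with the fact that a $G_0$-Jordan class is a cone. Writing $J=J_{G_0}(x)=G_0\cdot\fzeta(\fg^{x})^{reg}_1$ as in Corollary \ref{cor:JClc}, the description \eqref{eq:fzetareg} of $\fzeta(\fg^{x})^{reg}_1$ as $\{y\in\fzeta(\fg^{x})_1\mid \fg^y=\fg^x\}$ together with $\fg^{cy}=\fg^y$ for $c\in\CC^*$ shows this set, and hence $J$, to be stable under scalar multiplication; thus $\overline J$ is a cone (so $0\in\overline J$, in agreement with the case $e=0$). Let $\gamma\colon\CC^*\to G_0$ be the cocharacter with $d\gamma(1)=h$; this lands in $G_0$ because $h\in\fg_0$, and $\mathfrak{sl}(2)$-theory gives $\operatorname{Ad}(\gamma(t))e=t^2 e$, while the $\ad(h)$-weights on $\fg^f_1$ are nonpositive. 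Putting $\mu(t)=t^2\operatorname{Ad}(\gamma(t)^{-1})$, one checks that $\mu(t)$ preserves $S_{\fg_1,e}$ and that $\mu(t)(e+v)=e+t^{2-j}v$ for $v\in\fg^f_1$ of weight $j\leq 0$, so $\mu(t)s\to e$ as $t\to 0$ for every $s\in S_{\fg_1,e}$. Taking $s\in J\cap S_{\fg_1,e}$ from $(B)$ and using that $\gamma(t)^{-1}\in G_0$ and that $J$ is simultaneously $G_0$-stable and a cone, we get $\mu(t)s\in J$ for all $t\neq 0$, whence $e=\lim_{t\to 0}\mu(t)s\in\overline J$, which is $(A)$.

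The main obstacle is this last implication: the subtlety is to keep the whole contracting trajectory $\mu(t)s$ inside $\overline J$, and this forces one to notice that Jordan classes are invariant under scaling, so that the mixture of the $G_0$-action $\operatorname{Ad}(\gamma(t)^{-1})$ and the dilation by $t^2$ defining $\mu(t)$ does not push $s$ out of $J$. The remaining ingredients — smoothness of $\psi$ and the weight bookkeeping for the contraction — are already supplied by Lemma \ref{lem:transverse} and elementary $\mathfrak{sl}(2)$-representation theory.
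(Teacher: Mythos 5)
Your argument is correct and is essentially the paper's own: the crucial implication $J\cap S_{\fg_1,e}\neq\emptyset\Rightarrow e\in\overline J$ is obtained, exactly as in the paper, from a contracting $\CC^*$-action on the slice combined with the fact that $J$ is a $G_0$-stable cone --- you merely write out explicitly the action $\mu(t)=t^2\operatorname{Ad}(\gamma(t)^{-1})$ that the paper imports from \cite[Lemma 2.3]{BH}, and your cone argument via \eqref{eq:fzetareg} matches the paper's appeal to Proposition \ref{thm:fullcentralizer} and Corollary \ref{cor:JClc}. For the implication $\overline J\cap S_{\fg_1,e}\neq\emptyset\Rightarrow J\cap S_{\fg_1,e}\neq\emptyset$ you use openness of the smooth morphism $\psi$ from Lemma \ref{lem:transverse} together with density of $J$ in $\overline J$, a harmless variant of the paper's route, which instead applies the dominance statement of that same lemma to $X=\overline J$ and its irreducible components.
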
 
\begin{proof}
We note that $J$ is a locally closed $G_0$-stable cone by Proposition \ref{thm:fullcentralizer} and Corollary
\ref{cor:JClc}, so when $m=2$ these are the equivalences $(i)=(iv)=(v)$ in \cite[Theorem 2.6]{BH}. The proof of \cite[Lemma 2.3]{BH} shows the existence of a contracting ${\mathbb C}^*$-action on $S_{\fg_1,e}$ and it carries over to the $m>2$ case. If $J\cap S_{\fg_1,e}\neq\emptyset$ then each irreducible component of $\overline{J\cap S_{\fg_1,e}}$ is  stable under the $\mathbb C^*$-action, so $e$ lies in each of them.  As a consequence, $e\in\overline{J}$. 

Clearly $e\in\overline J$ gives $\overline{J}\cap S_{\fg_1,e}\neq\emptyset$, so it remains to show 
that  $\overline{J}\cap S_{\fg_1,e}\neq\emptyset$ implies  $J\cap S_{\fg_1,e}\neq\emptyset$. We follow the proof of \cite[Proposition 2.5]{BH}, establishing that 
$J\cap S_{\fg_1,e}$ is dense  in $\overline{J}\cap S_{\fg_1,e}$.  

Since $J$ is open in $\overline{J}$, the subset $J\cap S_{\fg_1,e}$ is open  in $\overline{J}\cap S_{\fg_1,e}$ and therefore
it is enough to prove that it meets every irreducible component $C$ of $\overline{J}\cap S_{\fg_1,e}$. The latter
follows then from the density of $G_0\cdot C$ in $\overline J$, guaranteed by  Lemma \ref{lem:transverse} applied to $X=\overline{J}$. 
\end{proof}

For the rest of the section we resume the convention from Subsection \ref{sec:local-study-closure} that whenever we have $y_s\in {\mathcal S}_V$, then $\fm:=\fg^{y_s}$,  and $M_0^\circ$ is the identity component of $M_0:=G_0^{y_s}=G_0\cap G^{y_s}$.

\begin{theorem}\label{thm:BH}
Let $J_1$, $J_2$ be $G_0$-Jordan classes in $V$. Then the following conditions are equivalent:
\begin{enumerate}
\item[$(i)$]\label{item:incluso} $J_2\subset\overline{J_1}$;
\item[$(ii)$]\label{item:inter} $J_2\cap\overline{J_1}\neq\emptyset$;
\item[$(iii)$]\label{item:slice} 
There exist $x\in J_1$, $y\in J_2$ such that  $\fg^{x_s}\subset \fm$ and
$J_{M_0^\circ}(x)\cap S_{\fm_1,y_n}\neq\emptyset$;
\item[$(iv)$]\label{item:closureM}  There exist $x\in J_1$, $y\in J_2$ such that  $\fg^{x_s}\subset \fm$ and
$y\in \overline{J_{M_0^\circ}(x)}.$
\end{enumerate}
\end{theorem}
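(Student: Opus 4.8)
The plan is to separate the equivalences into three independent pieces: the purely global $(i)\Leftrightarrow(ii)$, the reduction $(iii)\Leftrightarrow(iv)$ carried out \emph{inside} the centraliser $\fm=\fg^{y_s}$, and the transfer $(ii)\Leftrightarrow(iv)$ between $V$ and $\fm_1$ through the local models of Lemmas \ref{lem:mu} and \ref{lem:mu0}. First I would dispatch $(i)\Leftrightarrow(ii)$: the implication $(i)\Rightarrow(ii)$ is immediate since $J_2\neq\emptyset$, and for $(ii)\Rightarrow(i)$ Proposition \ref{prop:local_holds} gives that $\overline{J_1}$ is a union of $G_0$-Jordan classes; as these partition $V$, any class meeting $\overline{J_1}$ is contained in it, so $J_2\cap\overline{J_1}\neq\emptyset$ forces $J_2\subset\overline{J_1}$.

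Next I would prove $(iii)\Leftrightarrow(iv)$ by working entirely in the reductive $\ZZ_m$-graded Lie algebra $\{\fm,\theta\}$, whose centre is symmetrically graded (as recalled after Corollary \ref{cor:preliminaryfacts}), whose $\theta$-group is $M_0^\circ$ and whose degree-one part is $\fm_1$. Since $y_s$ is central in $\fm$ we have $y_s\in\fzeta(\fm)_1$, so Remark \ref{rem:definition-Jordan-classes}\eqref{item:centro} applied to $\{\fm,\theta\}$ shows that $\fzeta(\fm)_1$ acts by translations on every $M_0^\circ$-Jordan class, hence on its closure. Translating by $-y_s$ thus gives that $y=y_s+y_n\in\overline{J_{M_0^\circ}(x)}$ if and only if $y_n\in\overline{J_{M_0^\circ}(x)}$. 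As $y_n\in\mathcal N_V\cap\fm$ is nilpotent, Lemma \ref{lem:nilpotent} applied to $\{\fm,\theta\}$ with $e=y_n$ (whose Slodowy slice is precisely $S_{\fm_1,y_n}$) converts the latter into $J_{M_0^\circ}(x)\cap S_{\fm_1,y_n}\neq\emptyset$, which is exactly $(iii)$.

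It then remains to establish $(ii)\Leftrightarrow(iv)$. For $(iv)\Rightarrow(ii)$ I would argue algebraically: the hypotheses $x\in J_1$ and $\fg^{x_s}\subset\fm$ give $x\in J_1\cap U_{\fm}$, so $J_{M_0^\circ}(x)$ is one of the classes $J_{M,i}$ of Lemma \ref{lem:Um}(iv) and $J_{M_0^\circ}(x)\cap U_{\fm}\subset J_1$. Since $J_{M_0^\circ}(x)$ is irreducible and $U_{\fm}$ is open and meets it, $J_{M_0^\circ}(x)\cap U_{\fm}$ is dense in $J_{M_0^\circ}(x)$; as $\fm_1$ is closed in $V$, taking closures yields $y\in\overline{J_{M_0^\circ}(x)}=\overline{J_{M_0^\circ}(x)\cap U_{\fm}}\subset\overline{J_1}$, so $J_2\cap\overline{J_1}\neq\emptyset$. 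For the converse $(ii)\Rightarrow(iv)$ I would invoke $(i)$ and the analytic slice exactly as in the proof of Proposition \ref{prop:local_holds}: fix $y\in J_2$ and an $M_0$-stable analytic neighbourhood $\mathcal V$ of $y_s$ in $\fm_1$ on which $\mu_0$ is a diffeomorphism and, by \eqref{eq:identifications}, $\overline{J_1}\cap\mathcal V=\bigcup_{i\in I_{J_1}}(\overline{J_{M,i}}\cap\mathcal V)$. Because $\Oc_y^{M_0^\circ}=y_s+\Oc_{y_n}^{M_0^\circ}$ accumulates at $y_s$, the orbit $\Oc_y^{M_0^\circ}$ meets $\mathcal V$; a point $y'\in\Oc_y^{M_0^\circ}\cap\mathcal V\subset\overline{J_1}\cap\mathcal V$ then lies in some $\overline{J_{M,l}}$, and $M_0^\circ$-stability of $\overline{J_{M,l}}$ returns $y\in\overline{J_{M,l}}$. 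Choosing $x\in J_{M,l}\cap J_1\cap U_{\fm}$ gives $\fg^{x_s}\subset\fm$ and $J_{M_0^\circ}(x)=J_{M,l}$, which is $(iv)$.

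The main obstacle is this last transfer $(ii)\Rightarrow(iv)$: the étale slice map $\mu_0$ identifies $\overline{J_1}$ with the union of the $\overline{J_{M,i}}$ only \emph{locally} around $y_s$, so one must descend to the analytic neighbourhood $\mathcal V$ and simultaneously exploit the $M_0^\circ$-invariance of the relevant closures together with the accumulation of the nilpotent orbit $\Oc_{y_n}^{M_0^\circ}$ at the origin in order to drag the prescribed point $y$ into the region where the identification is valid. By contrast, the directions $(i)\Leftrightarrow(ii)$, $(iii)\Leftrightarrow(iv)$ and $(iv)\Rightarrow(ii)$ are essentially formal, resting only on Proposition \ref{prop:local_holds}, the inclusion $J_{M,i}\cap U_{\fm}\subset J_1$ of Lemma \ref{lem:Um}, the translation action of $\fzeta(\fm)_1$, and the nilpotent case recorded in Lemma \ref{lem:nilpotent}.
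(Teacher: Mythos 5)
Your proposal is correct, and its overall architecture coincides with the paper's: the same splitting into $(i)\Leftrightarrow(ii)$ via Proposition \ref{prop:local_holds}, $(iii)\Leftrightarrow(iv)$ via Lemma \ref{lem:nilpotent} applied inside $\{\fm,\theta\}$ combined with the translation action of $\fzeta(\fm)_1$ (your use of Remark \ref{rem:definition-Jordan-classes} is exactly the content of Lemma \ref{lem:ss-closure}~$(iv)$, which the paper cites instead), and $(ii)\Rightarrow(iv)$ via the analytic local model of Lemmas \ref{lem:mu} and \ref{lem:mu0}, including the same trick of pushing $y$ into $\mathcal V$ through the accumulation of $\Oc_{y_n}^{M_0^\circ}$ at the origin and then choosing a fresh representative $\widetilde x\in J_{M,l}\cap J_1\cap U_{\fm}$, whose membership in the saturated set $U_{\fm}$ gives $\fg^{\widetilde x_s}\subset\fm$. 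The one place where you genuinely depart from the paper is the direction $(iv)\Rightarrow(ii)$: the paper proves it with the same analytic identification it uses for $(ii)\Rightarrow(iv)$, namely it checks $y\in\mathcal V$ and reads off $y\in\overline{J_{G_0}(x)}$ from $\overline{J_{G_0}(x)}\cap G_0\cdot\mathcal V\cong G_0\times^{M_0}\bigcup_{i}(\overline{J_{M,i}}\cap\mathcal V)$, whereas you stay entirely in the Zariski topology: from $x\in J_1\cap U_{\fm}$ and Lemma \ref{lem:Um}~$(iv)$ you get $J_{M_0^\circ}(x)\cap U_{\fm}\subset J_1$, and since $U_{\fm}$ is open (Lemma \ref{lem:Um}~$(ii)$) and $J_{M_0^\circ}(x)$ is irreducible (Proposition \ref{eq:JordanIrr} for the reductive graded algebra $\{\fm,\theta\}$ --- precisely the point of the paper's extension of Popov's definitions to reductive $\fg$), the intersection is dense, so $\overline{J_{M_0^\circ}(x)}=\overline{J_{M_0^\circ}(x)\cap U_{\fm}}\subset\overline{J_1}$ and $y\in\overline{J_1}$. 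This argument is valid and arguably cleaner: it delivers one of the two hard implications without invoking Luna's fundamental lemma at all, at the modest cost of losing the symmetry of the paper's treatment, which disposes of both directions of $(ii)\Leftrightarrow(iv)$ with a single local identification.
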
 
\begin{proof} This is the generalization of  \cite[Theorem 3.5]{BH} to the $m>2$ case, but our proof is slightly different and it combines 
Lemma \ref{lem:nilpotent} and Lemma \ref{lem:Um} with our local approach. 

The equivalence 
$(i)\Leftrightarrow (ii)$ is immediate from Theorem \ref{prop:local_holds}. We prove the other ones.
\vskip0.2cm\par\noindent
{\it Claim $(iii)\Leftrightarrow (iv)$}.
Lemma \ref{lem:nilpotent} applied to $\fm$, $y_n$ and $J_{M_0^\circ}(x)$ says that
$J_{M_0^\circ}(x)\cap S_{\fm_1,y_n}\neq\emptyset$ if and only if $y_n\in \overline{J_{M_0^\circ}(x)}$. Since
$y_s\in \fzeta(\fm)_1$, 
the latter condition is equivalent to $y\in \overline{J_{M_0^\circ}(x)}$
by Lemma \ref{lem:ss-closure} $(iv)$.
\vskip0.2cm\par\noindent
{\it Claim $(iv)\Rightarrow (ii)$}. Let $x,\,y$ be as in $(iv)$.

Since $\fg^x\subset\fg^{x_s}$,  we have $x\in U_{\fm}$ and hence $J_{G_0}(x)\cap U_{\fm}\cap J_{M_0^\circ}(x)\neq\emptyset$. 
Lemma \ref{lem:Um} $(iv)$ gives \begin{equation}
\label{eq:union-later-use}
{J_{G_0}(x)\cap U_{\fm}}= \bigcup_{i\in I_{J_1}}J_{M,i}\cap U_{\fm}
\end{equation}
and $J_{M_0^\circ}(x)$ is, by construction, one of the $M_0^\circ$-Jordan classes occurring in the R.H.S. Applying Proposition \ref{prop:inclusion-closure} to $J=J_{G_0}(x)$ and $y\in \overline{J_{M_0^\circ}(x)}$, we see that $y\in\overline{J_{G_0}(x)}$.
This shows $y\in J_2\cap\overline{J_1}\neq\emptyset$, proving $(ii)$.
\vskip0.2cm\par\noindent
{\it Claim $(ii)\Rightarrow (iv)$}. 
Assume now $y\in J_2\cap \overline{J_1}$. By Proposition \ref{prop:inclusion-closure}  the element $y$ lies in $\overline{J_{M,l}}$ where $J_{M,l}$ is an $M_0^\circ$-Jordan class such that 
$\emptyset\neq J_{M,l}\cap U_{\fm}\subset J_1\cap U_{\fm}$. Then, for any $x\in  J_{M,l}\cap U_{\fm}$ the pair $(x,y)$  satisfies $(iv)$. 
\end{proof}

Comparing dimensions of orbits in $J_1$ and $J_2$ we readily get:
\begin{corollary}\label{cor:reg-closure}Let $J_1$, $J_2$ be $G_0$-Jordan classes in $V$. Then $J_2\subset \overline{J_1}^\bullet$ if and only if there exist $x\in J_1$, $y\in J_2$ such that  $\fg^{x_s}\subset \fm$, $J_{M_0^\circ}(x)\cap S_{\fm_1,y_n}\neq\emptyset$ and $\dim\Oc_x^{M_0}=\dim \Oc_{y_n}^{M_0}$.
\end{corollary}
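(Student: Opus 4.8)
The plan is to deduce this from Theorem~\ref{thm:BH} together with a single dimension count that converts the regularity condition ``$\bullet$'' into the orbit-dimension equality $\dim\Oc_x^{M_0}=\dim\Oc_{y_n}^{M_0}$. First I would record what $\overline{J_1}^\bullet$ is: since every $G_0$-Jordan class consists of orbits of one and the same dimension (Remark~\ref{rem:definition-Jordan-classes}), put $d=\dim\Oc_x^{G_0}$ for $x\in J_1$. As $\overline{J_1}\subset V_{(\leq d)}$ and $J_1\subset\overline{J_1}$, the maximal dimension of a $G_0$-orbit in $\overline{J_1}$ is exactly $d$, so by Definition~\ref{def:regularity} the set $\overline{J_1}^\bullet$ is precisely the union of the $d$-dimensional orbits in $\overline{J_1}$. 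Hence, for a $G_0$-Jordan class $J_2$ one has $J_2\subset\overline{J_1}^\bullet$ if and only if both $J_2\subset\overline{J_1}$ and $\dim\Oc_y^{G_0}=d$ for $y\in J_2$.

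Next I would establish the dimension bookkeeping. Fix $x\in J_1$ and $y\in J_2$ with $\fg^{x_s}\subset\fm$ as furnished by Theorem~\ref{thm:BH}, where $\fm=\fg^{y_s}$. On the one hand $\fg^{x}=\fg^{x_s}\cap\fg^{x_n}\subset\fg^{x_s}\subset\fm$, whence $\fm^{x}=\fm\cap\fg^{x}=\fg^{x}$ and in degree $0$ this gives $\fm_0^{x}=\fg_0^{x}$; on the other hand $\fg^{y}=\fg^{y_s}\cap\fg^{y_n}=\fm\cap\fg^{y_n}=\fm^{y_n}$, so $\fm_0^{y_n}=\fg_0^{y}$. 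Computing orbit dimensions via Lemma~\ref{lem:tanort} then yields
\begin{equation*}
\dim\Oc_x^{G_0}=\dim\fg_0-\dim\fg_0^{x}=(\dim\fg_0-\dim\fm_0)+\dim\Oc_x^{M_0},
\end{equation*}
and likewise $\dim\Oc_y^{G_0}=(\dim\fg_0-\dim\fm_0)+\dim\Oc_{y_n}^{M_0}$, the orbit dimensions for $M_0$ and $M_0^\circ$ coinciding as $M_0^\circ$ has finite index in $M_0$. Subtracting, the common constant $\dim\fg_0-\dim\fm_0$ cancels, and I conclude that $\dim\Oc_x^{G_0}=\dim\Oc_y^{G_0}$ if and only if $\dim\Oc_x^{M_0}=\dim\Oc_{y_n}^{M_0}$.

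Combining the two observations closes the argument in both directions. If $J_2\subset\overline{J_1}^\bullet$, then in particular $J_2\subset\overline{J_1}$, so Theorem~\ref{thm:BH} provides $x\in J_1$, $y\in J_2$ with $\fg^{x_s}\subset\fm$ and $J_{M_0^\circ}(x)\cap S_{\fm_1,y_n}\neq\emptyset$; for these representatives $\dim\Oc_y^{G_0}=d=\dim\Oc_x^{G_0}$, hence $\dim\Oc_x^{M_0}=\dim\Oc_{y_n}^{M_0}$ by the displayed identities. Conversely, given such $x,y$ with $\dim\Oc_x^{M_0}=\dim\Oc_{y_n}^{M_0}$, Theorem~\ref{thm:BH} yields $J_2\subset\overline{J_1}$, while the identities force $\dim\Oc_y^{G_0}=\dim\Oc_x^{G_0}=d$, so the orbits of $J_2$ are of maximal dimension in $\overline{J_1}$ and $J_2\subset\overline{J_1}^\bullet$. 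The only mild obstacle is that the \emph{same} pair $(x,y)$ must simultaneously witness the slice condition of Theorem~\ref{thm:BH} and the orbit-dimension equality; this causes no trouble precisely because orbit dimension is constant along each Jordan class, so the value $d$ and the equality $\dim\Oc_x^{G_0}=\dim\Oc_y^{G_0}$ are independent of the chosen representatives.
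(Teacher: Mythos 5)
Your proof is correct and follows essentially the same route as the paper: the paper derives this corollary directly from Theorem~\ref{thm:BH} by ``comparing dimensions of orbits in $J_1$ and $J_2$'', which is exactly the bookkeeping you carry out via $\fg_0^x=\fm_0^x$, $\fg_0^y=\fm_0^{y_n}$ and Lemma~\ref{lem:tanort}. Your write-up simply makes explicit the details (the characterization of $\overline{J_1}^\bullet$ as the union of $d$-dimensional orbits and the cancellation of the constant $\dim\fg_0-\dim\fm_0$) that the paper leaves implicit.
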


\begin{rem}
Condition $(iii)$ from Theorem \ref{thm:BH} is called weak slice-induction in \cite{BH}. If $J_2$ is weakly slice-induced from $J_1$ and satisfies the dimension condition in  Corollary \ref{cor:reg-closure}, then it is called slice-induced from $J_1$.  Slice-induction  is shown to coincide with parabolic induction in the ungraded case $m=1$ in \cite[Corollary 3.7]{BH}.
\end{rem} 

\begin{corollary}A $G_0$-Jordan class $J=J_{G_0}(y)$ contained in $V_{(d)}$ is dense in a sheet if and only if   $J_{M_0^\circ}(x)\cap S_{\fm_1,y_n}=\emptyset$ for any $x\in V_{(d)}\setminus J$ such that $\fg^{x_s}\subset\fm$.
\end{corollary}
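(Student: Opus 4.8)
The plan is to reduce the statement to Corollary \ref{cor:reg-closure} by showing that the dimension hypothesis appearing there becomes automatic once both classes are assumed to lie in $V_{(d)}$, and to rephrase ``$J$ is dense in a sheet'' as a maximality condition via Proposition \ref{prop:dense}.

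First I would record the characterization of density in a sheet. By Proposition \ref{prop:dense} every sheet $S$ carries a unique dense $G_0$-Jordan class $J'$ with $S=\overline{J'}^\bullet$, so the sheet containing the irreducible set $J\subset V_{(d)}$ is $S=\overline{J'}^\bullet$ for a unique $J'$, and $J$ is dense in $S$ precisely when $J=J'$. A short argument using that $\overline{J_1}^\bullet$ is open and dense in the irreducible set $\overline{J_1}$ (being the locus of maximal orbit dimension therein) then shows: $J$ is dense in a sheet if and only if there is no $G_0$-Jordan class $J_1\neq J$ with $J\subset\overline{J_1}^\bullet$. Here the inclusion $J\subset\overline{J_1}^\bullet$ automatically forces $J_1\subset V_{(d)}$, since the orbit dimensions in $J$ and in $J_1$ must then coincide.

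The heart of the proof is a dimension count. For $x=x_s+x_n\in V$ with $\fg^{x_s}\subset\fm=\fg^{y_s}$ one has $x\in\fm_1$ (since $x_s\in\fzeta(\fg^{x_s})_1$ and $x_n\in[\fg^{x_s},\fg^{x_s}]_1$ both lie in $\fg^{x_s}\subset\fm$) and $\fg^x=\fg^{x_s}\cap\fg^{x_n}\subset\fm$, whence $\fg^x=\fm^x$ and $\dim\Oc^{G_0}_x=(\dim G_0-\dim M_0)+\dim\Oc^{M_0}_x$. The same computation applied to $y=y_s+y_n$, using $\fg^y=\fm^{y_n}$, gives $d=\dim\Oc^{G_0}_y=(\dim G_0-\dim M_0)+\dim\Oc^{M_0}_{y_n}$. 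Comparing the two identities shows that as soon as $x\in V_{(d)}$, the equality $\dim\Oc^{M_0}_x=\dim\Oc^{M_0}_{y_n}$ holds automatically.

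With this in hand I would invoke Corollary \ref{cor:reg-closure}: for $x\in V_{(d)}$ with $\fg^{x_s}\subset\fm$ the dimension condition there is free, so $J\subset\overline{J_{G_0}(x)}^\bullet$ if and only if $J_{M_0^\circ}(x)\cap S_{\fm_1,y_n}\neq\emptyset$. Running this equivalence in both directions, and using a $G_0$-conjugation to move an arbitrary representative of $J$ onto the fixed $y$ (so that its stabilizer becomes exactly $\fm$, as in passing from the ``there exist'' formulation of Corollary \ref{cor:reg-closure} to the fixed datum $(y,\fm)$), matches the existence of a class $J_1=J_{G_0}(x)\neq J$ with $J\subset\overline{J_1}^\bullet$ against the existence of some $x\in V_{(d)}\setminus J$ with $\fg^{x_s}\subset\fm$ and $J_{M_0^\circ}(x)\cap S_{\fm_1,y_n}\neq\emptyset$. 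Combining this with the density characterization of the second paragraph yields the corollary. The main obstacle I anticipate is purely bookkeeping: keeping the roles of the two classes straight in Corollary \ref{cor:reg-closure} and verifying that conjugating a representative of $J$ into the slice datum $\fm=\fg^{y_s}$ leaves the intersection condition unchanged.
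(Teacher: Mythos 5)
Your proposal is correct and follows essentially the same route as the paper's own (very terse) proof: reduce to Proposition \ref{prop:dense} for the characterization of density in a sheet, and to Corollary \ref{cor:reg-closure} for translating ``$J\subset\overline{\mathcal J}^\bullet$ for some $\mathcal J\neq J$'' into the slice condition. Your explicit dimension count showing that $\dim\Oc_x^{M_0}=\dim\Oc_{y_n}^{M_0}$ is automatic once $x\in V_{(d)}$ and $\fg^{x_s}\subset\fm$, together with the $G_0$-conjugation moving the witness pair onto the fixed datum $(y,\fm)$, supplies precisely the bookkeeping that the paper leaves implicit in its two-sentence argument.
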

\begin{proof}
First of all, the irreducible  subset $J$ is contained in some sheet $S$ in  $V_{(d)}$ and there is  a unique $G_0$-Jordan class $J'\subset V_{(d)}$ such that $S=\overline{J'}^\bullet$ by  Proposition \ref{prop:dense}.
The condition $J_{M_0^\circ}(x)\cap S_{\fm_1,y_n}=\emptyset$ for any $x\in V_{(d)}\setminus J$ such that $\fg^{x_s}\subset\fm$ is equivalent to say that there are no $G_0$-Jordan classes $\mathcal J\neq J$ such that $J\subset \overline{\mathcal J}^\bullet$, in other words, that $J=J'$.
\end{proof}
\subsection{Parametrization of orbits and classes}
\label{subsec:parametrization}
We aim at a parametrization of the $G_0$-orbits contained in a $G_0$-Jordan class
$J_{G_0}(x)=G_0\cdot(\fzeta(\fg^{x_s})^{reg}_1+x_n)$. 
By Theorem \ref{thm:472},  we may assume that  $x=x_s+x_n\in V$ with $x_s\in\fc$, so Corollary \ref{cor:serveanchedopo} ensures that $\fzeta(\fg^{x_s})_1\subset \fc$. Let 
\begin{equation*}
\Gamma:=N_{W_{Vin}}(\fzeta(\fg^{x_s})_1),
\end{equation*}
the stabilizer of  $\fzeta(\fg^{x_s})_1$ in $W_{Vin}$.
\begin{rem}\label{rem:Gamma}
\begin{enumerate}
\item Observe that $x_s\in \fc$ implies  $Z_{G_0}(\fc)\subset G_0^{x_s}\subset N_{G_0}(\fg^{x_s})$. Corollary \ref{cor:preliminaryfacts} gives also
$N_{G_0}(\fg^{x_s})=N_{G_0}(\fzeta(\fg^{x_s})_1)=N_{G_0}(\fzeta(\fg^{x_s}))$, so $\Gamma\cong \big(N_{G_0}(\fc)\cap N_{G_0}(\fg^{x_s})\big)/Z_{G_0}(\fc)$. In other words, if $w\in\Gamma$, then any of its representatives $\dot{w}\in N_{G_0}(\fc)$ lies in $N_{G_0}(\fg^{x_s})$.
\item  The group $N_{G_0}(\fc)\cap N_{G_0}(\fg^{x_s})$ normalizes $G^{x_s}_0$ and $\fg^{x_s}_1$ and thus
acts on the  set of $G^{x_s}_0$-orbits in $\fg^{x_s}_1$. Since $Z_{G_0}(\fc)\subset G_0^{x_s}$, this action factors through an action of $\Gamma$ on the set of 
$G^{x_s}_0$-orbits in $\fg^{x_s}_1$ which preserves the set of  nilpotent ones. Observe that $G^{x_s}_0$ and its orbits need not be connected. 
\end{enumerate}
\end{rem}

We shall need the stabilizer $\Gamma_n$  in $\Gamma$ of $\mathcal O_{x_n}^{G^{x_s}_0}$ with respect to the action defined above:
\begin{equation*}\Gamma_n={\rm Stab}_\Gamma(\mathcal O_{x_n}^{G^{x_s}_0}).\end{equation*}
\begin{proposition}
Let $x=x_s+x_n\in V$ with $x_s\in\fc$.
The assignment $\varphi$ from $\fzeta(\fg^{x_s})^{reg}_1$ to the orbit set $J_{G_0}(x)/G_0$ given by 
$y_s\mapsto \mathcal O^{G_0}_{(y_s+x_n)}$ induces
a homeomorphism  
$\overline{\varphi}\colon \fzeta(\fg^{x_s})^{reg}_1/ \Gamma_n\longrightarrow J_{G_0}(x)/G_0$, where the orbit set is endowed with the quotient topology. 
\end{proposition}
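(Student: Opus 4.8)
The plan is to verify three things in turn: that the assignment $\varphi\colon\fzeta(\fg^{x_s})^{reg}_1\to J_{G_0}(x)/G_0$, $y_s\mapsto\Oc^{G_0}_{y_s+x_n}$, is well defined and surjective; that its fibres are exactly the $\Gamma_n$-orbits, so that $\overline\varphi$ is a bijection; and finally that $\overline\varphi$ is bicontinuous. Throughout I write $\fm=\fg^{x_s}$, $M=G^{x_s}$ (connected, since $x_s$ is semisimple) and $M_0=M\cap G_0=G_0^{x_s}$ with identity component $M_0^\circ$. The first point is the easiest: for $y_s\in\fzeta(\fg^{x_s})^{reg}_1$ one has $\fg^{y_s}=\fg^{x_s}=\fm$ by \eqref{eq:fzetareg}, so $y_s$ is a semisimple element of $\fm$ with centraliser $\fm$, while $x_n\in[\fg^{x_s},\fg^{x_s}]_1\subset\fm$ is nilpotent and commutes with $y_s$ (the latter being central in $\fm$). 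Hence $y_s,x_n$ are precisely the semisimple and nilpotent parts of $y_s+x_n$, and $y_s+x_n\in J_{G_0}(x)$ by Proposition \ref{eq:JordanIrr}; surjectivity of $\varphi$ is then immediate from $J_{G_0}(x)=G_0\cdot(\fzeta(\fg^{x_s})^{reg}_1+x_n)$.

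For the fibres I would argue both directions. Suppose $g\cdot(y_s+x_n)=y'_s+x_n$ with $g\in G_0$. Uniqueness of the Jordan decomposition and the previous paragraph force $g\cdot y_s=y'_s$ and $g\cdot x_n=x_n$; since $\fg^{y_s}=\fg^{y'_s}=\fm$ this gives $g\in N_{G_0}(\fm)$. Now $\fc$ and $g\cdot\fc$ are both Cartan subspaces of $\{\fm,\theta\}$, so Theorem \ref{thm:472} applied inside $\fm$ produces $h\in M_0^\circ$ with $h\cdot(g\cdot\fc)=\fc$, and $n:=hg\in N_{G_0}(\fc)\cap N_{G_0}(\fg^{x_s})$ represents some $w\in\Gamma$ by Remark \ref{rem:Gamma}(1). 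As $M$ is connected it fixes $\fzeta(\fm)$ pointwise, whence $w\cdot y_s=h\cdot y'_s=y'_s$; moreover $n\cdot x_n=h\cdot x_n\in\Oc_{x_n}^{M_0^\circ}\subset\Oc_{x_n}^{G_0^{x_s}}$, so $w\in\Gamma_n$. Thus $y'_s\in\Gamma_n\cdot y_s$. Conversely, given $w\in\Gamma_n$ with representative $\dot w\in N_{G_0}(\fc)\cap N_{G_0}(\fg^{x_s})$ and $y'_s=w\cdot y_s$, I would choose $k\in G_0^{x_s}$ with $(k^{-1}\dot w)\cdot x_n=x_n$ (possible since $\dot w\cdot x_n\in\Oc_{x_n}^{G_0^{x_s}}$ by $w\in\Gamma_n$); then $k^{-1}\dot w$ fixes $x_n$ and sends $y_s$ to $k^{-1}\cdot y'_s=y'_s$ (again $k\in M$ fixes $\fzeta(\fm)$), giving $\varphi(y_s)=\varphi(y'_s)$. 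Hence the fibres of $\varphi$ are the $\Gamma_n$-orbits and $\overline\varphi$ is a bijection.

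It then remains to show $\overline\varphi$ is a homeomorphism for the quotient topologies. Continuity is formal: $\varphi$ is the composite of the affine inclusion $y_s\mapsto y_s+x_n$ with the orbit projection $q\colon J_{G_0}(x)\to J_{G_0}(x)/G_0$, hence is continuous and factors through $\overline\varphi$ by the universal property. For the inverse it suffices to prove $\varphi$ open, and since
\begin{equation*}
q^{-1}(\varphi(U))=G_0\cdot(U+x_n)=a\big(G_0\times(U+x_n)\big),\qquad a\colon G_0\times(\fzeta(\fg^{x_s})^{reg}_1+x_n)\to J_{G_0}(x),
\end{equation*}
this reduces to the openness of the action morphism $a$. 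By $G_0$-equivariance it is enough to check that $a$ is submersive at each point $(1,y_s+x_n)$; as $J_{G_0}(x)$ is smooth (Proposition \ref{prop:Jordan-classes-are-smooth}) of dimension $\dim\Oc^{G_0}_x+\dim\fzeta(\fg^{x_s})_1$ (Corollary \ref{cor:JClc}), and the image of the differential is $[\fg_0,y_s+x_n]+\fzeta(\fg^{x_s})_1$, this comes down to the transversality
\begin{equation*}
[\fg_0,\,y_s+x_n]\cap\fzeta(\fg^{x_s})_1=0 .
\end{equation*}

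This transversality is the technical heart of the argument and the step I expect to be most delicate. I would establish it by decomposing $\fg=\fm\oplus[\fg,y_s]$ (the homogeneous eigenspace decomposition of the semisimple operator $\ad(y_s)$) and writing $\xi\in\fg_0$ as $\xi_\fm+\xi'$ with $\xi_\fm\in\fm_0$, $\xi'\in[\fg,y_s]_0$. Since $[\fg,y_s]$ is $\ad(x_n)$-stable ($x_n\in\fm$ commuting with $y_s$) and $[\xi_\fm,y_s]=0$, the $\fm$- and $[\fg,y_s]$-components of $[\xi,y_s+x_n]$ are $[\xi_\fm,x_n]$ and $[\xi',y_s+x_n]$. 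If $[\xi,y_s+x_n]\in\fzeta(\fg^{x_s})_1\subset\fm$, the $[\fg,y_s]$-component vanishes, so $\xi'\in\fg^{y_s+x_n}=\fm\cap\fg^{x_n}$; but $\xi'\in[\fg,y_s]$ and $\fm\cap[\fg,y_s]=0$, whence $\xi'=0$. Then $[\xi,y_s+x_n]=[\xi_\fm,x_n]\in[\fm,\fm]_1$, which meets the centre $\fzeta(\fm)_1$ only in $0$. This proves the transversality, hence the submersivity and openness of $a$, so $\varphi$ and therefore $\overline\varphi$ is open; being a continuous open bijection, $\overline\varphi$ is a homeomorphism. (The same conclusion holds Zariski-locally, a submersion of smooth varieties being a smooth, hence flat and open, morphism.)
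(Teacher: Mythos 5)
Your proof is correct and takes essentially the same route as the paper's: well-definedness and surjectivity from Proposition \ref{eq:JordanIrr}, identification of the fibres of $\varphi$ with the $\Gamma_n$-orbits by producing an element of $N_{G_0}(\fc)\cap N_{G_0}(\fg^{x_s})$, formal continuity, and openness of $\overline{\varphi}$ deduced from the action morphism $G_0\times(\fzeta(\fg^{x_s})^{reg}_1+x_n)\to J_{G_0}(x)$ having everywhere-surjective differential onto the smooth variety $J_{G_0}(x)$. The differences are only in level of detail, and to your credit: where the paper asserts the surjectivity of this differential by citing Corollary \ref{cor:JClc} and Proposition \ref{prop:Jordan-classes-are-smooth}, you prove the transversality $[\fg_0,\,y_s+x_n]\cap\fzeta(\fg^{x_s})_1=0$ that this assertion amounts to (via the decomposition $\fg=\fm\oplus[\fg,y_s]$); you also record the converse fibre inclusion needed for $\overline{\varphi}$ to be well defined, which the paper leaves implicit, and you obtain the element $w\in\Gamma$ by conjugating Cartan subspaces inside $\fm$ via Theorem \ref{thm:472} rather than quoting Theorem \ref{theorem:weylvinberg} directly --- an equivalent variant.
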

\begin{proof}
The map $\varphi$ is well-defined and surjective by Proposition \ref{eq:JordanIrr}. We prove injectivity. 
Let $y_s,z_s\in \fzeta(\fg^{x_s})^{reg}_1$ be such that $g\cdot(y_s+x_n)=z_s+x_n$ for some $g\in G_0$, i.e.,
\begin{align}
\label{eq:firststab}
g\cdot y_s&=z_s\;,\\
\label{eq:secondstab}
g\cdot x_n&=x_n\;,
\end{align}
and consider $w\in W_{Vin}$ such that $w\cdot y_s=z_s$, cf. Theorem \ref{theorem:weylvinberg}. Any representative $\dot{w}\in N_{G_0}(\fc)$ of $w$
satisfies $\dot{w}\cdot\fg^{x_s}=\dot{w}\cdot\fg^{y_s}=\fg^{z_s}=\fg^{x_s}$,
so $w\in \Gamma$ by Remark \ref{rem:Gamma}.
Moreover, $\dot{w}g^{-1}\in G^{z_s}\cap G_0=G^{x_s}_0$ 
by \eqref{eq:firststab}. It follows from \eqref{eq:secondstab} that
\begin{equation*}
\dot{w}\cdot x_n\in\mathcal O_{x_n}^{G^{x_s}_0}\qquad\text{so}\qquad \dot{w}\cdot \mathcal O_{x_n}^{G^{x_s}_0}=\mathcal O_{x_n}^{G^{x_s}_0}\;,
\end{equation*}
in other words $w\in \Gamma_n$ and $\overline{\varphi}$ is injective. 

Let $p\colon J_{G_0}(x)\to J_{G_0}(x)/G_0$ be the quotient map and $U$ an open subset in $J_{G_0}(x)/G_0$. Then $p^{-1}(U)$ is a $G_0$-stable open subset in $J_{G_0}(x)$ and its intersection 
$$
p^{-1}(U)\cap\big(\fzeta(\fg^{x_s})^{reg}_1\times\Oc_{x_n}^{G^{x_s}_0}\big)
$$
is an open $\Gamma_n$-stable subset of $\fzeta(\fg^{x_s})^{reg}_1\times\Oc_{x_n}^{G^{x_s}_0}$.
Its projection onto $\fzeta(\fg^{x_s})^{reg}_1$ is again an open $\Gamma_n$-stable subset, and so is its image through the quotient map by the finite group $\Gamma_n$.
We have therefore proved that $\overline{\varphi}$ is a continuous bijection, and it remains to show that is open.

By Corollary \ref{cor:JClc} and Proposition \ref{prop:Jordan-classes-are-smooth},
the action morphism $G_0\times (\fzeta(\fg^{x_s})^{reg}_1+x_n)\to J_{G_0}(x)$ is a morphism of smooth varieties whose
induced map on the tangent spaces is surjective.
Hence it is smooth, and an open morphism in the Zariski topology (see \cite[VII, Remark 1.2]{MR0274461} and \cite[V, Theorem 5.1 and VII, Theorem 1.8]{MR0274461}). From this, it is straightforward to see that $\overline{\varphi}$ is open. 
\end{proof}

We briefly turn to the parametrization of $G_0$-Jordan classes.
Thanks to Theorem \ref{theorem:weylvinberg} 
and Example \ref{rem:G0regular-in-V}
describing the centralizer of an element of $\fc$,
we easily establish the following.
\begin{lemma}
\label{lem:vinbergstrata}
Let $x_s$ and $y_s$ be two elements in $\fc$. Then the centralizers $\fg^{x_s}$ and $\fg^{y_s}$ are $G_0$-conjugate if and only if there exists $w\in W_{Vin}$ such that $w\cdot \Sigma(x_s)=\Sigma(y_s)$.
\end{lemma}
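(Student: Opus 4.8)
The plan is to encode each centraliser through the restricted roots and then transport everything by the action of $N_{G_0}(\fc)$. By Example \ref{rem:G0regular-in-V}, for $z\in\fc$ we have $\fg^{z}=\fc_\fg(\fc)\oplus\bigoplus_{\sigma\in\Sigma(z)}\fg_\sigma$, so $\fg^{z}$ is completely recorded by the subset $\Sigma(z)\subset\Sigma$. Moreover, a representative $\dot w\in N_{G_0}(\fc)$ of $w\in W_{Vin}$ normalises $\fc$, hence preserves $\fc_\fg(\fc)$ and sends $\fg_\sigma$ to $\fg_{w\cdot\sigma}$, where $w$ acts on $\Sigma\subset\fc^*$ through its action on $\fc$; this permutation of $\Sigma$ depends only on $w$. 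Combining these two facts gives the master identity $\dot w\cdot\fg^{z}=\fc_\fg(\fc)\oplus\bigoplus_{\sigma\in\Sigma(z)}\fg_{w\cdot\sigma}=\fc_\fg(\fc)\oplus\bigoplus_{\tau\in w\cdot\Sigma(z)}\fg_\tau$, so that for $z,z'\in\fc$ one has $\dot w\cdot\fg^{z}=\fg^{z'}$ precisely when $w\cdot\Sigma(z)=\Sigma(z')$.

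The implication ($\Leftarrow$) is then immediate: if $w\cdot\Sigma(x_s)=\Sigma(y_s)$ for some $w\in W_{Vin}$, the master identity yields $\dot w\cdot\fg^{x_s}=\fg^{y_s}$ with $\dot w\in N_{G_0}(\fc)\subset G_0$, so $\fg^{x_s}$ and $\fg^{y_s}$ are $G_0$-conjugate.

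For ($\Rightarrow$) the task is to realise a given $G_0$-conjugacy by an element of $N_{G_0}(\fc)$, and this is where I expect the (mild) crux to lie. Suppose $g\cdot\fg^{x_s}=\fg^{y_s}$ with $g\in G_0$, equivalently $\fg^{g\cdot x_s}=\fg^{y_s}$. Since $x_s\in\fzeta(\fg^{x_s})_1$, applying $g$ gives $g\cdot x_s\in\fzeta(\fg^{g\cdot x_s})_1=\fzeta(\fg^{y_s})_1$, which is contained in $\fc$ by Corollary \ref{cor:serveanchedopo} (as $y_s\in\fc$). Thus $x_s$ and $g\cdot x_s$ both lie in $\fc$ and are $G_0$-conjugate, so Theorem \ref{theorem:weylvinberg} furnishes $w\in W_{Vin}$ with $w\cdot x_s=g\cdot x_s$. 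Choosing a representative $\dot w\in N_{G_0}(\fc)$ we get $\dot w\cdot x_s=g\cdot x_s$ as elements of $V$, whence $\dot w\cdot\fg^{x_s}=\fg^{\dot w\cdot x_s}=\fg^{g\cdot x_s}=\fg^{y_s}$. Feeding this into the master identity gives $w\cdot\Sigma(x_s)=\Sigma(y_s)$, completing the proof. The only points requiring care are the compatibility $\dot w\cdot\fg_\sigma=\fg_{w\cdot\sigma}$ and the fact, used above, that $g\cdot x_s$ actually returns to $\fc$; both are supplied by the preliminaries recorded in the first paragraph.
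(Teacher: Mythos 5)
Your proof is correct and follows exactly the route the paper intends: the paper states the lemma as an easy consequence of Theorem \ref{theorem:weylvinberg} and the centralizer description \eqref{eq:centralizerss} of Example \ref{rem:G0regular-in-V}, and your argument is precisely that, with the additional (correct) use of Corollary \ref{cor:serveanchedopo} to ensure $g\cdot x_s$ lands back in $\fc$ so that Theorem \ref{theorem:weylvinberg} applies. Nothing is missing; the ``master identity'' $\dot w\cdot\fg^{z}=\fg^{z'}\Leftrightarrow w\cdot\Sigma(z)=\Sigma(z')$ is exactly the content the authors leave implicit.
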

The hyperplane arrangement on $\fc$ determined by the restricted roots $\sigma\in\Sigma$ admits an action of $W_{Vin}$ and it induces a stratification on $\fc$, where two elements lie in the same stratum 
$
\fzeta(\fg^{x_s})^{reg}_1
=\left\{y_s\in\fc\mid \Sigma(y_s)=\Sigma(x_s) \right\}
$
if and only if their centralizers coincide. Equivalently, the stratum associated to a closed and symmetric subset $\widetilde\Sigma\subset \Sigma$ (as in of \cite[pag. 182]{MR1349140}) is
$$
S_{\widetilde\Sigma}=\left\{x\in\fc\mid\Sigma(x)=\widetilde\Sigma\right\}
$$
and the collection of $S_{\widetilde\Sigma}$'s is a finite partition of $\fc$. Already in the ungraded case, where the class of centralizers of semisimple elements coincides with the class of Levi subalgebras, not all closed and symmetric subsets $\widetilde\Sigma$ of $\Sigma$ give rise to a non-empty stratum. 
In the graded case, some information on stabilizers of generic elements in ${\mathcal S}_V$ is to be found in \cite{MR0430168} under the assumption that $V$ is a simple $G_0$-module.
We refer to \cite[Proposition 3.4]{MR3890218} for an alternative general description of centralizers of semisimple elements.
In view of Lemma \ref{lem:vinbergstrata}, two strata $S_{\widetilde\Sigma}$ and $S_{\widetilde\Sigma'}$ are said to be equivalent if $w\cdot\widetilde\Sigma=\widetilde\Sigma'$ for some $w\in W_{Vin}$. 
Given $\widetilde\Sigma\subset\Sigma$, we set $\fm(\widetilde\Sigma)$ to be the $\theta$-stable Levi subalgebra of $\fg$
constructed as in \eqref{eq:centralizerss} and $M(\widetilde\Sigma)\subset G$, $M(\widetilde\Sigma)_0=M(\widetilde\Sigma)\cap G_0\subset G_0$ as usual.

\begin{proposition}
\label{prop:bijective-Jordan-classes}
Jordan classes in $V$ are in one-to-one correspondence with $W_{Vin}$-classes of pairs $(\widetilde\Sigma,\mathcal O)$ where 
$\widetilde\Sigma\subset \Sigma$ satisfies $S_{{\widetilde\Sigma}}\neq\emptyset$ and $\mathcal O$ is a nilpotent orbit in $\fm(\widetilde\Sigma)_1$ for the action of $M(\widetilde\Sigma)_0$. 
\end{proposition}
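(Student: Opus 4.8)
The plan is to build the bijection by assigning to each $G_0$-Jordan class a pair $(\widetilde\Sigma,\mathcal O)$ and checking that this assignment is well-defined up to the $W_{Vin}$-action, injective, and surjective. First I would fix a Jordan class $J$ and, invoking Theorem \ref{thm:472}, choose a representative $x=x_s+x_n$ with $x_s\in\fc$; by Corollary \ref{cor:serveanchedopo} we have $\fzeta(\fg^{x_s})_1\subset\fc$, and Example \ref{rem:G0regular-in-V} lets me record $\fg^{x_s}=\fc_\fg(\fc)\oplus\bigoplus_{\sigma\in\Sigma(x_s)}\fg_\sigma$, so the closed symmetric set $\widetilde\Sigma:=\Sigma(x_s)$ determines $\fm(\widetilde\Sigma)=\fg^{x_s}$ as in \eqref{eq:centralizerss}. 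I then set $\mathcal O:=\mathcal O_{x_n}^{M(\widetilde\Sigma)_0}$, a nilpotent orbit in $\fm(\widetilde\Sigma)_1$ since $x_n\in[\fg^{x_s},\fg^{x_s}]_1\subset\mathcal N_V$. Thus $J\mapsto(\widetilde\Sigma,\mathcal O)$.

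The first task is well-definedness. Two representatives with semisimple part in $\fc$ differ by the freedom in Theorem \ref{thm:472}, so I must show the pair is independent of the choice of $x$ up to the $W_{Vin}$-action described in Remark \ref{rem:Gamma}. Concretely, if $x=x_s+x_n$ and $x'=x_s'+x_n'$ both lie in $J$ with $x_s,x_s'\in\fc$, then by Proposition \ref{thm:fullcentralizer} there is $g\in G_0$ with $g\cdot\fg^{x_s}=\fg^{x_s'}$ and $g\cdot x_n$ conjugate to $x_n'$; Lemma \ref{lem:vinbergstrata} then produces $w\in W_{Vin}$ with $w\cdot\widetilde\Sigma=\widetilde\Sigma'$, and Remark \ref{rem:Gamma}(2) shows that $w$ carries $\mathcal O$ to $\mathcal O'$ under the induced action on nilpotent orbits. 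Hence the $W_{Vin}$-class of $(\widetilde\Sigma,\mathcal O)$ is well-defined. Injectivity is essentially the converse of this computation: if two classes produce $W_{Vin}$-equivalent pairs, then reversing the argument via Lemma \ref{lem:vinbergstrata} yields $g\in G_0$ realizing \eqref{eq:JCV}, so the classes coincide by definition of $G_0$-Jordan equivalence. Surjectivity is direct: given any pair $(\widetilde\Sigma,\mathcal O)$ with $S_{\widetilde\Sigma}\neq\emptyset$, pick $x_s\in S_{\widetilde\Sigma}\subset\fc$ so that $\fg^{x_s}=\fm(\widetilde\Sigma)$, choose a nilpotent $x_n\in\fm(\widetilde\Sigma)_1$ representing $\mathcal O$, and note $x=x_s+x_n$ lies in a Jordan class mapping to the prescribed pair.

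The main obstacle will be handling the $W_{Vin}$-action cleanly in the well-definedness and injectivity steps: I must track not just the conjugacy of the Levi subalgebras $\fg^{x_s}$ (governed by Lemma \ref{lem:vinbergstrata}) but simultaneously the matching of nilpotent orbits inside them. The subtlety is that an element $g\in G_0$ realizing $g\cdot\fg^{x_s}=\fg^{x_s'}$ need not normalize $\fc$, so I cannot directly read off a Weyl group element acting on orbits; the resolution is to compose $g$ with an element of $G^{x_s'}_0$ to arrange that the product normalizes $\fc$, exactly as in the stabilizer computation \eqref{eq:firststab}--\eqref{eq:secondstab} of the preceding proposition, and then to use that $N_{G_0}(\fc)\cap N_{G_0}(\fg^{x_s})$ acts on the set of nilpotent $G^{x_s}_0$-orbits in $\fg^{x_s}_1$ as in Remark \ref{rem:Gamma}(2). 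Once this bookkeeping is set up, the four verifications are routine consequences of Proposition \ref{thm:fullcentralizer}, Lemma \ref{lem:vinbergstrata}, and Proposition \ref{eq:JordanIrr}.
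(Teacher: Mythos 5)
Your proposal is correct and takes essentially the same route as the paper: both arguments reduce the statement to showing that $G_0$-conjugacy of normalized pairs $(\fm(\widetilde\Sigma),\mathcal O)$ coincides with $W_{Vin}$-conjugacy, and the fix you identify for the key subtlety --- adjusting $g$ by an element of $G_0^{x_s'}$ so that the product lies in $N_{G_0}(\fc)$ and therefore defines a Weyl group element acting simultaneously on $\widetilde\Sigma$ and on the nilpotent orbit --- is exactly the paper's decomposition $g=l\,\dot\sigma\dot w$ with $l\in M(\widetilde\Sigma')_0$ and $\dot\sigma\dot w\in N_{G_0}(\fc)$. The only difference is organizational: the paper quotients the known correspondence between Jordan classes and $G_0$-classes of pairs $(\mathfrak{l},\mathcal O)$, whereas you verify well-definedness, injectivity and surjectivity of the induced map directly.
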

\begin{proof}
Observe that   $N_{G_0}(\fc)$ acts on the set of pairs $(\widetilde\Sigma,\mathcal O)$ as above and that if  $\fm(\widetilde\Sigma)$ is the centralizer of some $x_s\in\fc$, then $Z_{G_0}(\fc)\subset M(\widetilde\Sigma)_0$, hence it acts trivially on $(\widetilde\Sigma,\Oc)$. Thus the action of $N_{G_0}(\fc)$ induces an action of $W_{Vin}$. 

Now recall that, for  $x\in V$ the assignment $J_{G_0}(x)\mapsto (\fg^{x_s},\Oc_{x_n}^{G_0^{x_s}})$ establishes a one-to one-correspondence between $G_0$-Jordan classes in $V$ and $G_0$-classes of pairs $(\mathfrak l,\Oc)$ where $\mathfrak l$ is the stabilizer of a semisimple element in $V$ and $\Oc$ a nilpotent orbit in $\mathfrak l_1$ for the action of $L_0$. Theorem \ref{thm:472} guarantees that we can always find a pair in the $G_0$-orbit where $\mathfrak l=\fm(\widetilde\Sigma)$ for some $\widetilde\Sigma\subset\Sigma$. 
Assume that for two pairs $(\fm(\widetilde\Sigma),\Oc)$ and $(\fm(\widetilde\Sigma'),\Oc')$ of this form  there is $g\in G_0$ such that
$(g\cdot\fm(\widetilde\Sigma),g\cdot\Oc)=(\fm(\widetilde\Sigma'),\Oc')$. By Lemma \ref{lem:vinbergstrata} we can decompose $g=g'\dot{w}$, where $g'\in N_{G_0}(\fm(\widetilde\Sigma'))$ and $\dot{w}\in N_{G_0}(\fc)$. In addition, $g'=l\dot{\sigma}$ with $l\in M(\widetilde\Sigma')_0$ and $\dot{\sigma}\in N_{G_0}(\fm(\widetilde\Sigma'))\cap N_{G_0}(\fc)$. In other words, we may replace $g$ by an element in $N_{G_0}(\fc)$, so 
$(\fm(\widetilde\Sigma),\Oc)$ and $(\fm(\widetilde\Sigma'),\Oc')$ lie in the same $W_{Vin}$-orbit. 
\end{proof}
The results of \cite{MR504529} encompass a parametrization of the $G_0$-Jordan classes, 
where $\theta$ is the automorphism of order $m=3$ of $\fg=E_8$ for which 
$\fg_{1}\cong\Lambda^3\CC^9$, $\fg_0\cong\fsl(9)$ and $\fg_{-1}\cong\Lambda^3(\CC^9)^*$ as 
in Example \ref{ex:tworegularityconditions}. This is shown in the following:
\begin{example}
\label{exampleE8-Jordan-classes}
By the discussion in \cite[\S 3.4]{MR504529}, the seven ``families'' described in \cite[\S 1]{MR504529}  parametrize the Levi subalgebras $\mathfrak l=\fg^{x_s}$ that arise from elements $x_s\in\mathcal S_V$ up to $G_0$-conjugation,
and the ``classes'' in Tables $1$-$6$ of \cite[\S 1]{MR504529} parametrize the nilpotent orbits in $\mathfrak l_1$ for the action of $G^{x_s}_0$. 
(If $x_s$ is in family I then $\fg^{x_s}=\fh$, there is no non-trivial nilpotent orbit and only one class.)
By Proposition \ref{prop:bijective-Jordan-classes}, our $G_0$-Jordan classes almost coincide with the classes of \cite{MR504529}: 
the finite group $N_{G_0}(\mathfrak l)/G^{x_s}_0$ acts on the set of nilpotent $G^{x_s}_0$-orbits in $\mathfrak l_1$, possibly glueing some of them.

Hence, some of the $164$ classes of \cite{MR504529} may correspond to the same $G_0$-Jordan class.
A look at Tables $1$-$6$ tells us that this may happen only in a few cases, since centralizers of elements of a $G_0$-Jordan class are $G_0$-conjugate by Proposition \ref{thm:fullcentralizer} and $N_{G_0}(\mathfrak l)/G^{x_s}_0=\1$ in the VII family:

\begin{itemize}
\item[] {\bf III family}: Classes $2$-$3$, $4$-$6$, and $7$-$8$;
\item[] {\bf V family}: Classes $7$-$8$, and $10$-$11$;
\item[] {\bf VI family}: Classes $5$-$6$, $8$-$9$, $11$-$12$, and $17$-$18$.
\end{itemize}

Recall that the support of a trivector $\varphi\in \Lambda^3\CC^9$
is the unique minimal subspace $E\subset\CC^9$ 
such that $\varphi\in \Lambda^3 E$. Its dimension is the {\it rank} of $\varphi$, one of the simplest discrete $G_0$-invariants of a trivector.
The nilpotent $G^{x_s}_0$-orbits associated to the classes $7$-$8$ in V family have different rank, so they are not $G_0$-related. Thus, they correspond to different $G_0$-Jordan classes. A similar observation works in all the remaining cases, except 
those of the III family and the classes $5$-$6$ of VI family, but it is not difficult to see that the nilpotent $G^{x_s}_0$-orbits of these last two classes are not $G_0$-related. 
It remains therefore to deal with the III family. 

First of all, the rank of the nilpotent orbit in class $4$ is strictly smaller than the rank of those in classes $5$ and $6$. However, the permutation matrix
\begin{equation}
\label{eq:matrix-appropriate}
g=-\left(\begin{array}{c|c|c} 
\Id_{3\times 3} & 0 & 0\\
\hline
0 & 0 & \Id_{3\times 3}\\
\hline
0 & \Id_{3\times 3} & 0
\end{array}
\right)
\end{equation}
is an element of $N_{G_0}(\mathfrak l)$ and it {\it does} relate the nilpotent $G^{x_s}_0$-orbits associated to classes $5$-$6$, which then correspond to a single $G_0$-Jordan class. The same is true for classes $2$-$3$ and $7$-$8$. 
In summary, the space 
$\Lambda^3\CC^9$ is partitioned into $161$ $G_0$-Jordan classes. 

The quotient $\Gamma/W_{x_s}$ of $\Gamma$ with the stabilizer $W_{x_s}$ of $x_s\in\fc$ in $W_{Vin}$ was found in  \cite[\S 3.4]{MR504529} for all families (see also the fourth and fifth columns of 
\cite[Table 7]{MR504529}). In the case of III family, it is a group of order $72$ generated by complex reflections. 
Consider, for example, the $G_0$-Jordan class III.5, represented by $x=x_s+x_n$. 
A simple check shows that $g$ as in \eqref{eq:matrix-appropriate} normalizes also $\fc$, so
$g\in N_{G_0}(\fzeta(\fg^{x_s}))\cap N_{G_0}(\fc)$ and, by our previous discussion, it is not in $\Gamma_n$. The $G_0$-orbits in the $G_0$-Jordan class III.5 are then parametrized by the quotient $\fzeta(\fg^{x_s})^{reg}_1/ \Gamma_n$ of $\fzeta(\fg^{x_s})^{reg}_1$ by a group $\Gamma_n$ of order $36$.

We conclude with an application of Theorem \ref{thm:BH}. Let $J_2=J_{G_0}(y)$ be the $G_0$-Jordan class numbered III.$7$, i.e., the one with the representative
$y=y_s+y_n$ given by
\begin{equation}
\label{eq:y_s-example}
y_s=\big(\be_{123}+\be_{456}+\be_{789}\big)+i\big(\be_{147}+\be_{258}+\be_{369}\big)\;,\quad
y_n=\be_{159}\;.
\end{equation}
The centralizer $\fm=\fg^{y_s}$ is a reductive Lie algebra with semisimple part $\fr$ of type $A_2\oplus A_2$. More precisely, 
the center of $\fm$ is $4$-dimensional and sits in degrees $\pm 1$: it consists of the two components in brackets that defines $y_s$ in \eqref{eq:y_s-example} and of their duals. The semisimple part $\fr=\fr_{-1}\oplus\fr_0\oplus\fr_1$ is graded as follows \cite[\S 2.4]{MR504529}:
\begin{equation}
\label{eq:centralizer-third-class}
\begin{aligned}
\fr_{1}&=\mathrm{span}\{\be_{159},\be_{267},\be_{348}\}\oplus \mathrm{span}\{\be_{168},\be_{249},\be_{357}\}\;,\\
\fr_0&=\mathrm{span}\{d_{159},d_{267},d_{348}\}\oplus\mathrm{span}\{d_{168},d_{249},d_{348}\}\;,\\
\fr_{-1}&=\mathrm{span}\{\be^{159},\be^{267},\be^{348}\}\oplus \mathrm{span}\{\be^{168},\be^{249},\be^{357}\}\;,
\end{aligned}
\end{equation}
where $\be^i$, for $1\leq i\leq 9$, is the dual basis of $(\mathbb C^9)^*$, $\be^{ijl}:=\be^i\wedge \be^j\wedge \be^l$ and the elements $d_{ijk}=[\be_{ijk},\be^{ijk}]$ satisfy $d_{159}+d_{267}+d_{348}=d_{168}+d_{249}+d_{348}=0$.
The direct sums of vector spaces in \eqref{eq:centralizer-third-class} correspond to the Lie algebra decomposition of $\fr$.

Let $J_1=J_{G_0}(x)$ be any of the $G_0$-Jordan classes in the II family, i.e., one of II.$1$, II.$2$ or II.$3$. The choice of representative $x=x_s+x_n$ given by
\begin{equation}
\label{eq:x_s-example}
x_s=y_s+\big(\be_{159}+\be_{267}+\be_{348}\big)\;,\quad
x_n=\begin{cases}\be_{168}+\be_{249}\;&\text{for II}.1\;,\\
\be_{168}\;&\text{for II}.2\;,\\
0&\text{for II}.3\;,
\end{cases}
\end{equation}
easily allows to check that $J_2\subset \overline J_1$. First of all $\fzeta(\fg^{x_s})_1$ is generated by the $3$ vectors in brackets in 
\eqref{eq:y_s-example} and \eqref{eq:x_s-example}, hence $y_s\in\fzeta(\fg^{x_s})_1$ and $\fg^{x_s}\subset \fm$.
A graded $\fsl(2)$-triple $\{e,h,f\}$ in $\fm$ with $e=y_n$ is provided by 
$f=\be^{159}$ and $h=d_{159}$, and the required Slodowy slice $S_{\fm_1,e}=e+\fm_1^f$ is the affine subspace in $\fm_1$ modeled on $\fm_1^f=\mathrm{span}\{\be_{267},\be_{348}\}\oplus \mathrm{span}\{\be_{168},\be_{249},\be_{357} \}\oplus\fzeta(\fm)_1$. It is evident that $x\in S_{\fm_1,e}$, so $J_2\subset \overline J_1$ thanks to Theorem \ref{thm:BH} (iii).
\end{example}

\appendix
\section{Cartan, Levi and parabolic subalgebras in $\mathbb Z_m$-graded Lie algebras}
\label{appendix:homogeneous-Levi}
\label{subsec:hcsa}
Let $\{\fg,\theta\}$ be a reductive $\mathbb Z_m$-graded Lie algebra and $\fc\subset V$ a fixed Cartan subspace. The existence of a homogeneous Cartan subalgebra $\fh$ of $\fg$ containing $\fc$ is a result
probably known to experts by a long time; the proof in \cite[\S 4.1]{reeder} is stated for $\fg$ simple, but its proof carries over for any reductive $\fg$.
\begin{proposition}
\label{thm:existencecartan}
There exists a homogeneous Cartan subalgebra $\fh=\bigoplus_{l\in\ZZ_m}\fh_{l}$ of $\fg$ that satisfies
$\fh\supset\fzeta(\fc_\fg(\fc))$ and $\fh_1=\fc$. 
\end{proposition}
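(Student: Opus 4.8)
The plan is to build $\fh$ inside the centralizer $\fl:=\fc_\fg(\fc)$, which by \eqref{eq:centralizerdecomp} is a homogeneous (i.e.\ $\theta$-stable) reductive Levi subalgebra of $\fg$. By Proposition \ref{prop:Cartansubspaceandcentralizerofc} the Cartan subspace $\fc$ is exactly $\fzeta(\fl)_1$ and the graded algebra $\{[\fl,\fl],\theta\}$ has zero rank; both facts will be used only at the very end. The first step is to produce a single $\theta$-stable Cartan subalgebra of $\fl$. Since $\fl$ is reductive and $\theta$ restricts to a finite-order, hence semisimple, automorphism of $\fl$, I would invoke Steinberg's theorem that a semisimple automorphism of a connected reductive group normalizes a pair consisting of a Borel subgroup and a maximal torus of it: applying this to a connected group $L$ with $\mathrm{Lie}(L)=\fl$ yields a $\theta$-stable maximal torus, whose Lie algebra $\fh$ is a $\theta$-stable, hence homogeneous, Cartan subalgebra of $\fl$. (This is essentially the content of \cite[\S4.1]{reeder}; the cited argument is phrased for simple $\fg$ but uses only reductivity.)

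Next I would upgrade $\fh$ from a Cartan subalgebra of $\fl$ to one of $\fg$ and check the required containment. Every Cartan subalgebra of the reductive Lie algebra $\fl$ contains its centre, so $\fh\supset\fzeta(\fl)=\fzeta(\fc_\fg(\fc))$, giving the desired inclusion. Moreover $\fl=\fc_\fg(\fc)$ is a Levi subalgebra, hence of maximal rank, so a Cartan subalgebra of $\fl$ is automatically a Cartan subalgebra of $\fg$; thus $\fh$ is a homogeneous Cartan subalgebra of $\fg$, with $\fh=\bigoplus_{l\in\ZZ_m}\fh_l$ and $\fh_l=\fh\cap\fg_l$.

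It remains to identify the degree-one part. Decompose $\fl=\fzeta(\fl)\oplus[\fl,\fl]$ as in \eqref{eq:reductive}; accordingly $\fh=\fzeta(\fl)\oplus\fh''$ with $\fh'':=\fh\cap[\fl,\fl]$ a Cartan subalgebra of $[\fl,\fl]$, which is $\theta$-stable because both $\fh$ and $[\fl,\fl]$ are. Since $\fh''$ is toral, its degree-one part $\fh''_1$ consists of semisimple elements of $[\fl,\fl]_1$; but $\{[\fl,\fl],\theta\}$ has zero rank, which means precisely $[\fl,\fl]_1\subset\mathcal N$, so every element of $\fh''_1$ is simultaneously semisimple and nilpotent and therefore vanishes. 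Taking degree-one parts in the decomposition then yields $\fh_1=\fzeta(\fl)_1\oplus\fh''_1=\fc\oplus 0=\fc$, which completes the argument.

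The main obstacle is the very first step, the existence of a homogeneous Cartan subalgebra of the reductive graded algebra $\fl$: this is the one point where a genuine input beyond the linear algebra of gradings is needed, namely Steinberg's fixed-point theorem for semisimple automorphisms (or the explicit construction in \cite{reeder}). Everything after it is bookkeeping, the only delicate choice being to carry out the construction inside $\fl$ rather than all of $\fg$, so that the zero-rank hypothesis on $[\fl,\fl]$ can be brought to bear to kill the part of $\fh_1$ outside $\fc$.
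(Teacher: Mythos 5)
Your argument is correct, and it is essentially the argument the paper itself relies on: the paper gives no written proof of Proposition \ref{thm:existencecartan}, deferring to \cite[\S 4.1]{reeder} together with the observation that the proof there carries over from simple to reductive $\fg$, and your reconstruction --- work inside $\mathfrak l=\fc_\fg(\fc)$, produce a $\theta$-stable Cartan subalgebra of $\mathfrak l$ via Steinberg's theorem, observe that it contains $\fzeta(\mathfrak l)$ and is a Cartan subalgebra of $\fg$ by maximality of rank, then kill the degree-one part of $\fh\cap[\mathfrak l,\mathfrak l]$ using Proposition \ref{prop:Cartansubspaceandcentralizerofc} --- is precisely the argument being referenced. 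Two small refinements: to invoke Steinberg's theorem you must let $\theta|_{\mathfrak l}$ act on an actual algebraic group, which is cleanest on the adjoint group of $[\mathfrak l,\mathfrak l]$ (the centre $\fzeta(\mathfrak l)$ lies in every Cartan subalgebra of $\mathfrak l$, so it needs no group-level statement); and your final step admits a shortcut that bypasses the zero-rank property altogether, since $\fh_1$ is an abelian subspace of $V$ consisting of semisimple elements and containing $\fc$, so the maximality in the definition of a Cartan subspace already forces $\fh_1=\fc$.
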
 
\begin{rem}
By \cite[\S 3.1]{MR0430168}, the Cartan subspace $\fc$ is {\it not} an algebraic subalgebra in general, unless $m\leq 2$. 
On the other hand $\fh$ 
and $\fzeta(\fc_\fg(\fc))$ are algebraic, hence $\fh\supset\fzeta(\fc_\fg(\fc))\supset \overline\fc$, where
$\overline\fc$ is the algebraic closure of $\fc$. It is clear that $\fh=\fzeta(\fc_\fg(\fc))$ if and only if 
$[\fc_\fg(\fc),\fc_\fg(\fc)]=0$ but we are not aware of any general condition under which
$\fzeta(\fc_\fg(\fc))=\overline\fc$.
\end{rem}
We will call {\it adapted} any Cartan subalgebra $\fh$ of $\fg$ as in Proposition \ref{thm:existencecartan}. For such an $\fh$, let
\begin{equation}
\label{eq:rootdecompositionfull}
\fg=\fh\oplus\bigoplus_{\alpha\in\Phi}\fg_{\alpha}
\end{equation}
be the root space decomposition of $\fg$ with respect to $\fh$, with associated set of roots $\Phi\subset\fh^*$. The automorphism $\theta:\fg\to\fg$ permutes the root spaces in \eqref{eq:rootdecompositionfull}:
\begin{lemma}
\label{lem:thetaonrootspaces}
For any $\alpha\in\Phi$, we have $\alpha\circ\theta\in\Phi$ and
$\theta^{-1}(\fg_\alpha)=\fg_{\alpha\circ\theta}$.
\end{lemma}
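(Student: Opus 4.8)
The plan is to exploit that an adapted Cartan subalgebra $\fh$ is homogeneous, hence $\theta$-stable, and then transport the root-space eigenvalue condition through the automorphism $\theta$. First I would observe that, since $\fh=\bigoplus_{l}\fh_{l}$ is homogeneous by Proposition \ref{thm:existencecartan}, we have $\theta(\fh)=\fh$, so that $\alpha\circ\theta$ is a well-defined linear functional on $\fh$ for every $\alpha\in\Phi$.

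The core computation is then immediate. Let $y\in\fg$ be such that $\theta(y)\in\fg_\alpha$ and let $h\in\fh$ be arbitrary. Using that $\theta$ is a Lie algebra automorphism together with $\theta(h)\in\fh$, one gets
\begin{equation*}
\theta([h,y])=[\theta(h),\theta(y)]=\alpha(\theta(h))\,\theta(y)=(\alpha\circ\theta)(h)\,\theta(y),
\end{equation*}
and applying $\theta^{-1}$ yields $[h,y]=(\alpha\circ\theta)(h)\,y$. Hence $\theta^{-1}(\fg_\alpha)$ is contained in the $(\alpha\circ\theta)$-weight space of $\ad_\fg\fh$ in the decomposition \eqref{eq:rootdecompositionfull}.

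Next I would verify that $\alpha\circ\theta\in\Phi$, i.e.\ that this weight is genuinely a root. Since $\theta^{-1}$ is invertible, $\theta^{-1}(\fg_\alpha)\neq 0$, so the weight $\alpha\circ\theta$ does occur in the $\ad_\fg\fh$-decomposition of $\fg$; it remains only to rule out the zero weight. But the zero-weight space of $\ad_\fg\fh$ is exactly $\fh$ (as $\fh$ is self-centralizing in the reductive Lie algebra $\fg$), so $\alpha\circ\theta=0$ would force $\theta^{-1}(\fg_\alpha)\subset\fh$ and hence, applying $\theta$ and using $\theta(\fh)=\fh$, $\fg_\alpha\subset\fh$, contradicting $\alpha\neq 0$. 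Therefore $\alpha\circ\theta$ is a root and $\theta^{-1}(\fg_\alpha)\subset\fg_{\alpha\circ\theta}$.

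Equality then follows by a dimension count: root spaces are one-dimensional and $\theta^{-1}$ is injective, so $\dim\theta^{-1}(\fg_\alpha)=\dim\fg_\alpha=1=\dim\fg_{\alpha\circ\theta}$, and the containment of two spaces of equal finite dimension is an equality. I do not expect any real obstacle here, as the argument is essentially formal; the only point demanding a moment's care is the exclusion of $\alpha\circ\theta=0$, which I handle through the $\theta$-stability of $\fh$ and the identification of the zero-weight space with $\fh$.
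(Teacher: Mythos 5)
Your proof is correct; the paper actually states this lemma without proof, and your argument (transporting the eigenvalue equation $[h,y]=\alpha(\theta(h))\,y$ through the automorphism, ruling out the zero weight via $\fc_\fg(\fh)=\fh$, and concluding equality from one-dimensionality of root spaces) is exactly the routine verification the authors implicitly rely on. As a minor remark, the final dimension count can be avoided by applying the same inclusion to $\theta^{-1}$ and the root $\alpha\circ\theta$, which gives the reverse containment $\fg_{\alpha\circ\theta}\subset\theta^{-1}(\fg_\alpha)$ directly.
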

We note that any root $\alpha\in\Phi$ can be decomposed as $\alpha=\alpha_0+\alpha_1+\cdots+\alpha_{m-2}+\alpha_{m-1}$, where $\alpha_l|_{\fh_l}=\alpha|_{\fh_l}$ and $\operatorname{Ker}(\alpha_\ell)=\bigoplus_{k\neq l}\fh_{k}$ for any $l\in\mathbb Z_m$. Repeatedly applying Lemma \ref{lem:thetaonrootspaces}, we see that $$\alpha\circ\theta^l=\alpha_0+\omega^l\alpha_1+\cdots+(\omega^l)^{m-2}\alpha_{m-2}+(\omega^l)^{m-1}\alpha_{m-1}$$
is a root too, for any $l\in\mathbb Z_m$. In other words, we may consider the equivalence class of roots given by $[\alpha]=\left\{\alpha\circ\theta^l\mid l\in\mathbb Z_m\right\}$ for any $\alpha\in\Phi$.
We let $[\Phi]=\left\{[\alpha]\mid\alpha\in\Phi\right\}$ be the collection of such equivalence classes and note that the direct sum of root spaces
$$\fg_{[\alpha]}=\bigoplus_{l\in\mathbb Z_m}\fg_{\alpha\circ\theta^l}$$
is a homogeneous subspace of $\fg$, whence 
$\fg=\fh\oplus\bigoplus_{[\alpha]\in[\Phi]}\fg_{[\alpha]}$ is a decomposition of $\fg$ into homogeneous subspaces.

\vskip0.2cm\par
Now, the centralizer $\fg^x$ of any $x\in\fc$ is a homogeneous Levi subalgebra containing $\fc_\fg(\fc)$.  
A natural question is whether there exists a parabolic subalgebra of $\fg$ with Levi factor $\fg^x$ that is also homogeneous: we will now see that this is rarely the case.
For simplicity of exposition, we restrict to the case where $\fg$ is semisimple.

Let $\fp$ be a proper parabolic subalgebra of $\fg$ with a 
homogeneous Levi factor $\mathfrak l$ that contains $\fc_{\fg}(\fc)$. Then, there exists a $\mathbb Z$-grading 
\begin{equation}
\label{eq:Zgradingofg}
\fg=\bigoplus_{j\in\mathbb Z}\fg(j) 
\end{equation}
of $\fg$ such that $\fp=\fg(\geq 0)=\bigoplus_{j\geq 0}\fg(j)$ and $\mathfrak l=\fg(0)$. We let $Z\in\fg$ be the grading element of \eqref{eq:Zgradingofg}, the unique element in $\fg$ that satisfies $[Z,X]=jX$ for all $X\in\fg(j)$, $j\in\mathbb Z$, see, e.g., \cite{MR2532439}. 

Now $Z\in\mathfrak z(\fc_{\fg}(\fc))$, so it belongs to the adapted Cartan subalgebra
$\fh=\bigoplus_{l\in\ZZ_m}\fh_{l}$
of $\fg$ of Proposition \ref{thm:existencecartan}. We will write $Z=Z_0+\cdots+Z_{m-1}$, where $Z_l\in\fh_l$ for all $l\in\mathbb Z_m$.

\begin{definition}
Let $\alpha=\alpha_0+\cdots+\alpha_{m-1}\in\Phi$ be a root with respect to $\fh$ and $l\in\mathbb Z_m$. The $l^{\text{th}}$ {\it mode of $\alpha$} is the complex number $\lambda_l=\alpha_l(Z_l)$.
\end{definition}
We remark that $\alpha(Z)=\sum_{l\in\mathbb Z_m}\lambda_l$. Since the adjoint action of $Z$ has integer eigenvalues, we may apply Lemma \ref{lem:thetaonrootspaces} repeatedly to the roots $\alpha\circ\theta^l\in\Phi$ and get:
\begin{proposition}
The modes of $\alpha$ satisfy a system of linear equations of the form
\begin{equation}
\label{eq:cyclotomicmatrix}
\begin{pmatrix}
1 & 1 & 1 & \cdots & 1 \\
1& \omega & \omega^2 & \cdots & \omega^{m-1} \\
1 & \omega^2 & (\omega^2)^2 & \cdots & (\omega^{m-1})^{2}\\
\vdots & \vdots & \vdots &  & \vdots \\
1 & \omega^{m-1} & (\omega^2)^{m-1} & \cdots & (\omega^{m-1})^{m-1} 
\end{pmatrix}
\begin{pmatrix}
\lambda_0 \\
\lambda_1 \\
\lambda_2 \\
\vdots\\
\lambda_{m-1}
\end{pmatrix}
=
\begin{pmatrix}
n_0 \\
n_1 \\
n_2 \\
\vdots\\
n_{m-1}
\end{pmatrix}\;,
\end{equation}
where $n_l=\alpha(\theta^l(Z))\in{\mathbb Z}$ for any $l\in\{0,\,\ldots,\,m-1\}$.
\end{proposition}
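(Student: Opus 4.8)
The plan is to compute the right-hand side $n_l=\alpha(\theta^l(Z))$ directly and recognise it as the $l^{\text{th}}$ row of the claimed matrix product. First I would exploit that $\fh$ is homogeneous, hence $\theta$-stable: since $Z_k\in\fh_k$, the defining property $\theta(x)=\omega^k x$ for $x\in\fg_k$ gives $\theta(Z_k)=\omega^k Z_k$, and therefore
$$\theta^l(Z)=\sum_{k=0}^{m-1}\theta^l(Z_k)=\sum_{k=0}^{m-1}\omega^{kl}Z_k.$$

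Next I would apply the root $\alpha$ and unravel the decomposition $\alpha=\alpha_0+\cdots+\alpha_{m-1}$. By definition $\alpha_j=\alpha|_{\fh_j}$ is supported on $\fh_j$, so that $\alpha_j(Z_k)=0$ whenever $j\neq k$ and $\alpha(Z_k)=\alpha_k(Z_k)=\lambda_k$. Combining this with the previous display yields
$$n_l=\alpha(\theta^l(Z))=\sum_{k=0}^{m-1}\omega^{kl}\,\alpha(Z_k)=\sum_{k=0}^{m-1}\omega^{lk}\lambda_k,$$
which is precisely the $l^{\text{th}}$ scalar equation encoded by \eqref{eq:cyclotomicmatrix}, the $(l,k)$ entry of the matrix being $\omega^{lk}$. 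Letting $l$ range over $\{0,\ldots,m-1\}$ then produces the full system.

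It remains to justify the integrality $n_l\in\mathbb Z$. Here I would simply rewrite $n_l=\alpha(\theta^l(Z))=(\alpha\circ\theta^l)(Z)$ and observe, by iterating Lemma \ref{lem:thetaonrootspaces}, that $\alpha\circ\theta^l\in\Phi$. Since $Z$ is the grading element of the $\mathbb Z$-grading \eqref{eq:Zgradingofg}, the operator $\ad(Z)$ has integer eigenvalues on root spaces, i.e.\ $\beta(Z)\in\mathbb Z$ for every $\beta\in\Phi$; applying this to $\beta=\alpha\circ\theta^l$ gives $n_l\in\mathbb Z$.

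The argument is essentially a single direct computation, so I do not expect a serious obstacle. The only point requiring slight care is the integrality of the $n_l$: one must \emph{not} read $n_l$ as a value of $\alpha$ itself (which need not be integral, as $\theta^l(Z)$ is generally not $Z$), but rather identify it with the value of the genuinely distinct root $\alpha\circ\theta^l$ on the integral grading element $Z$. This is exactly where Lemma \ref{lem:thetaonrootspaces} is needed, and it is the one place where homogeneity of $\fh$ and the hypothesis that $Z$ underlies a $\mathbb Z$-grading are both used.
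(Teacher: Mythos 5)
Your proof is correct and takes essentially the same route as the paper: the paper's (very terse) justification consists of the identity $\alpha\circ\theta^l=\alpha_0+\omega^l\alpha_1+\cdots+(\omega^l)^{m-1}\alpha_{m-1}$ obtained by iterating Lemma \ref{lem:thetaonrootspaces}, evaluated at the grading element $Z$, which is exactly your computation $\theta^l(Z)=\sum_k\omega^{kl}Z_k$ transposed to the dual side. The integrality of the $n_l$ is handled identically in both arguments, namely by recognising $n_l=(\alpha\circ\theta^l)(Z)$ as the value of a genuine root on $Z$, whose adjoint action has integer eigenvalues.
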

The $m\times m$ matrix on the L.H.S. of \eqref{eq:cyclotomicmatrix} is a symmetric matrix of Vandermonde type with coefficients in the cyclotomic field $\mathbb Q(\omega)$. We denote it by $M(\omega)$ and compactly rewrite \eqref{eq:cyclotomicmatrix} as $M(\omega)\vec \lambda=\vec n$, where $\vec\lambda\in \mathbb C^m$ is the vector of modes and $\vec n\in\mathbb Z^m$. Clearly all modes are elements of $\mathbb Q(\omega)$, but we have the following stronger result for $\lambda_0$.
\begin{proposition}
\label{prop:non-existence}
The identity $m \lambda_0=\sum_{l\in\mathbb Z_m}n_l$ is satisfied for any $\alpha$, therefore $\lambda_0\in \tfrac{1}{m}{\mathbb Z}$. If  $\fh_0=0$, then $\fp$ is not $\theta$-stable.
\end{proposition}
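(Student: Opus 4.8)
The plan is to read off $\lambda_0$ by summing all $m$ scalar equations packaged in \eqref{eq:cyclotomicmatrix}. Summing the rows of $M(\omega)$, the coefficient of $\lambda_j$ becomes the $j^{\text{th}}$ column sum $\sum_{i=0}^{m-1}(\omega^j)^i$, which equals $m$ for $j=0$ and vanishes for $j\neq 0$ by the standard geometric-series identity for roots of unity (equivalently, $\sum_{l\in\ZZ_m}\theta^l=m\,\pi_{\fg_0}$, where $\pi_{\fg_0}\colon\fg\to\fg_0$ is the projection onto the $\theta$-fixed part, evaluated on $Z$ to give $\sum_l\theta^l(Z)=mZ_0$). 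Hence only $\lambda_0$ survives and $m\lambda_0=\sum_{l\in\ZZ_m}n_l$. Since each $n_l\in\ZZ$, this yields $\lambda_0\in\tfrac1m\ZZ$ at once.

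For the second assertion, the key observation is that $\mathfrak{h}_0=0$ forces $Z_0=0$, because $Z_0\in\fh_0$. As $\lambda_0=\alpha_0(Z_0)=\alpha(Z_0)$ for every root $\alpha$ (the components $\alpha_k$ with $k\neq0$ vanish on $\fh_0$), the first part gives $\sum_{l\in\ZZ_m}n_l=m\lambda_0=0$ for all $\alpha\in\Phi$. I would then argue by contradiction: assume $\fp$ is $\theta$-stable, and note we may take $\fp$ to be a \emph{proper} parabolic, since for $\fp=\fg$ one has $Z=0$ and the statement is vacuous. Because $\theta$ permutes the root spaces by Lemma \ref{lem:thetaonrootspaces} and the adapted Cartan $\fh$ is homogeneous, $\theta$-stability of $\fp=\fg(\geq0)=\fh\oplus\bigoplus_{\alpha(Z)\geq0}\fg_\alpha$ is equivalent to $\theta$-stability of its root set $\{\alpha\in\Phi\mid\alpha(Z)\geq0\}$; this being a bijection of finite order, $\alpha(Z)\geq0$ implies $(\alpha\circ\theta^l)(Z)\geq0$ for all $l$.

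Now I would exploit properness: the nilradical $\fg(>0)$ is nonzero, so there is a root $\alpha$ with $n_0=\alpha(Z)>0$. Since $n_l=\alpha(\theta^l(Z))=(\alpha\circ\theta^l)(Z)$ and the root set of $\fp$ is $\theta$-stable, all the $n_l$ are $\geq0$ while $n_0>0$; therefore $\sum_{l}n_l\geq n_0>0$, contradicting $\sum_l n_l=0$. This contradiction shows $\fp$ cannot be $\theta$-stable. I expect the only delicate points to be bookkeeping ones: correctly identifying $n_l=(\alpha\circ\theta^l)(Z)$ so that the sign information transfers between the modes and the membership of root spaces in $\fp$, and recording that the argument genuinely needs $\fp$ proper (a nonzero nilradical) in order to produce a root with $\alpha(Z)>0$. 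Everything else reduces to the elementary root-of-unity identity of the first paragraph.
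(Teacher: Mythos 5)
Your proof is correct and follows essentially the same route as the paper's: summing the $m$ equations (equivalently, the paper's observation that every column of $M(\omega)$ except the first has zero entry-sum) gives $m\lambda_0=\sum_{l\in\ZZ_m}n_l$, and when $\fh_0=0$ one gets $Z_0=0$, hence $\lambda_0=0$, so any root with $n_0=\alpha(Z)>0$ must have some $n_l<0$, i.e.\ $\theta^{-l}\fg_\alpha=\fg_{\alpha\circ\theta^l}\not\subset\fp$ --- which is exactly your contradiction argument phrased directly. The only (welcome) difference is that you explicitly record the need for $\fp$ to be proper in order to produce a root with $\alpha(Z)>0$, a point the paper leaves implicit.
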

\begin{proof} Let $W=\{\vec y\in\mathbb C^m|~\sum_{_{l\in\mathbb Z_m}} y_l=0\}$. All columns of $M(\omega)$ but the first one lie in $W$, so 
\begin{equation}\label{eq:vector}
\vec n-\lambda_0\begin{pmatrix}
1 \\
1 \\
1 \\
\vdots\\
1
\end{pmatrix}\in W.\end{equation} Adding all entries of the above vector 
 gives  $m\lambda_0=\sum_{l\in\mathbb Z_m}n_l\in {\mathbb Z}$. 
If $\fh_0=0$, then $Z_0=0$, so  $\lambda_0=0$ and
 $\vec n\in W\cap{\mathbb Z}^m$. 

Now, $\fh\subset \fg(0)$ and $\fp=\fh\oplus\bigoplus_{\alpha\in\Phi,
\alpha(Z)\geq0}\fg_\alpha$. If $\fg_\alpha\subset \bigoplus_{j>0}\fg(j)$, then $\alpha(Z)=n_0>0$ and, if $\fh_0=0$, there exists $l\in\mathbb Z_m$ such that $n_l<0$, i.e., $\theta^{-l}\fg_\alpha=\fg_{\alpha\circ\theta^l}\not\in \fp$. 
\end{proof}
\begin{example}
The $\mathbb Z_m$-graded Lie algebra $\{\fg,\theta,m\}=\{E_8,\theta,3\}$ as in Examples \ref{ex:tworegularityconditions} and 
\ref{exampleE8-Jordan-classes} satisfies $\fh_0=0$. By 
Theorem \ref{thm:472} and Proposition \ref{prop:non-existence}, {\it all} centralizers $\fg^x$ of non-zero $x\in\mathcal S_V$ do {\it not} extend to $\theta$-stable parabolic subalgebras.
\end{example}

\section*{Acknowledgments}
We thank Michael Bulois for useful email exchanges on sheets in symmetric Lie algebras and slice induction, Willem de Graaf for a helpful discussion on $\mathcal N$-regular gradings, Dmitri Panyushev for information on generic stabilizers of semisimple elements in $V$, and Vladimir Popov for pointing us reference \cite{MR3890218}.We thank the anonymous referees for pointing out an error in the earlier version of Corollary \ref{cor:mu} and for several comments, leading to an improved version of the manuscript. This research was partially supported by DOR1898721/18 and BIRD179758/17 funded by the University of Padova and FFABR2017 funded by MIUR. This project started when the third named author was holding a Type B Post doc Fellowship at the University of Padova. The third named author acknowledges that the research leading to these results has received funding from the Norwegian Financial Mechanism 2014-2021 (project registration number 2019/34/H/ST1/00636).
\bibliography{Biblio}
\bibliographystyle{plain}
%

\end{document}